\newtheorem{theorem}{Theorem}
\newtheorem{proposition}{Proposition}
\newtheorem{lemma}{Lemma}
\newtheorem{corollary}{Corollary}
\DeclareFontFamily{U}{mathx}{}
\DeclareFontShape{U}{mathx}{m}{n}{ <-> mathx10 }{}
\DeclareSymbolFont{mathx}{U}{mathx}{m}{n}
\DeclareMathAccent{\widebar}{0}{mathx}{"73}
\definecolor{darkred}{RGB}{153,0,0} 
\Crefname{corollary}{Corollary}{Corollaries}
\Crefname{eqnarray}{eq.}{eqs.}
\Crefname{equation}{eq.}{eqs.}
\Crefname{figure}{Fig.}{Figs.}
\Crefname{tabular}{Tab.}{Tabs.}
\Crefname{table}{Tab.}{Tabs.}
\Crefname{lemma}{Lemma}{Lemmas}
\Crefname{proposition}{Prop.}{Propositions}
\Crefname{theorem}{Thm.}{Thms.}
\Crefname{definition}{Def.}{Defs.} 
\Crefname{section}{Section}{Sections}
\Crefname{assumption}{Assumption}{Assumptions}
\Crefname{exmp}{Example}{Examples}
\Crefname{exercise}{Exercise}{Exercises}
\newcommand{\mydash}{{\hbox{\rule{1ex}{.2ex}}}}
\def\bdd#1{b_{\text{\rm\tiny\ref{#1}}}}
\def\bdds#1{\clE^{\text{\rm\tiny\ref{#1}}}}
\def\bdde#1{\varrho^{\text{\rm\tiny\ref{#1}}}}
\def\Obj{\Upgamma}  
\def\bexp{b_{\text\rm\sf e}}
\def\rhoexp{\varrho_{\text\rm\sf e}}
\def\CEdist{W}
\def\Trelax{T_{\text{\sf r}}}
\def\Crelax{\varrho_{\text{\sf r}}}
\newlength{\noteWidth}
\long\def\notes#1{\ifinner
{\tiny #1}
\else 
\marginpar{\parbox[t]{\noteWidth}{\raggedright\tiny#1}}
\fi\typeout{#1}}
\def\notes#1{}
\def\bl#1{{\color{blue}#1}}         
\def\mindex#1{\index{#1}}
\def\sq{\hbox{\rlap{$\sqcap$}$\sqcup$}}
\def\qed{\ifmmode\sq\else{\unskip\nobreak\hfil
\penalty50\hskip1em\null\nobreak\hfil\sq
\parfillskip=0pt\finalhyphendemerits=0\endgraf}\fi\medskip}
\long\def\defbox#1{\framebox[.9\hsize][c]{\parbox{.85\hsize}{%
\parindent=0pt
\baselineskip=12pt plus .1pt      
\parskip=6pt plus 1.5pt minus 1pt 
 #1}}}
\long\def\beginbox#1\endbox{\subsection*{}%
\hbox{\hspace{.05\hsize}\defbox{\medskip#1\bigskip}}%
\subsection*{}}
\def\endbox{}
\def\transpose{{\intercal}}
\def\darrow{\buildrel{\rm d}\over\longrightarrow}
\newsavebox{\junk}
\savebox{\junk}[1.6mm]{\hbox{$|\!|\!|$}}
\def\lll{{\usebox{\junk}}}
\def\limsup{\mathop{\rm lim\ sup}}
\def\state{{\sf X}}
\def\bx{{{\cal B}(\state)}}
\newcommand{\field}[1]{\mathbb{#1}}
\def\posRe{\field{R}_+}
\def\Re{\field{R}}
\def\One{\mbox{\rm{\large{1}}}}
\def\ind{\field{I}}
\def\intgr{\field{Z}}
\def\bfmath#1{{\mathchoice{\mbox{\boldmath$#1$}}%
{\mbox{\boldmath$#1$}}%
{\mbox{\boldmath$\scriptstyle#1$}}%
{\mbox{\boldmath$\scriptscriptstyle#1$}}}}
\def\bfmB{\bfmath{B}}
\def\bfmD{\bfmath{D}}
\def\bfmQ{\bfmath{Q}}
\def\bfmY{\bfmath{Y}}
\def\bfmhhaY{\bfmath{\hhaY}} 
\def\bfmhhaY{\hbox to 0pt{$\widehat{\bfmY}$\hss}\widehat{\phantom{\raise 1.25pt\hbox{$\bfmY$}}}}
\def\bfmW{\bfmath{W}}
\def\bfPhi{\bfmath{\Phi}}
\def\haP{{\widehat P}}
\def\haf{{\hat f}}
\def\hag{{\hat g}}
\def\til={{\widetilde =}}
\def\tilP{{\widetilde P}}
\def\tiltheta{\widetilde \theta}
\def\tiltheta{{\tilde \theta}}
\def\clB{{\cal B}}
\def\clD{{\cal D}}
\def\clE{{\cal E}}
\def\clF{{\cal F}}
\def\clG{{\cal G}}
\def\clN{{\cal N}}
\def\clP{{\cal P}}
\def\clR{{\cal R}}
\def\clS{{\cal S}}
\def\clT{{\cal T}}
\def\clX{{\cal X}}
\def\clZ{{\cal Z}}
 \def\FRAC#1#2#3{\genfrac{}{}{}{#1}{#2}{#3}}
\def\ddt{{\mathchoice{\FRAC{1}{d}{dt}}%
{\FRAC{1}{d}{dt}}%
{\FRAC{3}{d}{dt}}%
{\FRAC{3}{d}{dt}}}}
\def\ddtp{{\mathchoice{\FRAC{1}{d^{\hbox to 2pt{\rm\tiny +\hss}}}{dt}}%
{\FRAC{1}{d^{\hbox to 2pt{\rm\tiny +\hss}}}{dt}}%
{\FRAC{3}{d^{\hbox to 2pt{\rm\tiny +\hss}}}{dt}}%
{\FRAC{3}{d^{\hbox to 2pt{\rm\tiny +\hss}}}{dt}}}}
\def\half{{\mathchoice{\FRAC{1}{1}{2}}%
{\FRAC{1}{1}{2}}%
{\FRAC{3}{1}{2}}%
{\FRAC{3}{1}{2}}}}
\def\fourth{{\mathchoice{\FRAC{1}{1}{4}}%
{\FRAC{1}{1}{4}}%
{\FRAC{3}{1}{4}}%
{\FRAC{3}{1}{4}}}}
\def\eqdef{\mathbin{:=}}
\def\Prob{{\sf P}}
\def\Expect{{\sf E}}
\def\average#1,#2,{{1\over #2} \sum_{#1}^{#2}}
\def\eye(#1){{\bf(#1)}\quad}
\def\epsy{\varepsilon}
\def\varble{\,\cdot\,}
\newcommand{\beqn}[1]{\notes{#1}%
\begin{eqnarray} \elabel{#1}}
\newcommand{\eeqn}{\end{eqnarray} }
\newcommand{\beq}[1]{\notes{#1}%
\begin{equation}\elabel{#1}}
\newcommand{\eeq}{\end{equation}}
\def\bdes{\begin{description}}
\def\edes{\end{description}}
\def\barf{{\overline {f}}}
\def\barA{{\overline {A}}}
\newcounter{rmnum}
\newenvironment{romannum}{\smallbreak%
\begin{list}{{\upshape (\roman{rmnum})}}{\usecounter{rmnum}
\setlength{\leftmargin}{18pt}
\setlength{\listparindent}{15pt}
\setlength{\topsep}{0pt}
\setlength{\parsep}{0pt} 
\setlength{\rightmargin}{8pt}
\setlength{\itemindent}{5pt}
}}{\smallbreak\end{list}}
\newcounter{anum}
\def\ass(#1:#2){(#1\ref{#1:#2})}
\def\ritem#1{
\item[{\sf \ass(\current_model:#1)}]
}
\newenvironment{recall-ass}[1]{%
\begin{description}
\def\current_model{#1}}{
\end{description}
}
\newcommand{\bd}{\begin{description}}
\newcommand{\ed}{\end{description}}
\newcommand{\bt}{\begin{theorem}}
\newcommand{\et}{\end{theorem}}
\newcommand{\ba}{\begin{array}{rcl}}
\newcommand{\ea}{\end{array}}
\def\thetaPR{\theta^{\text{\tiny\sf  PR}}}
\def\tilthetaPR{\tilde{\theta}^{\text{\tiny\sf  PR}}}
\def\zPR{z^{\text{\tiny\sf  PR}}}
\def\tTheta{{\text{\tiny$\Theta$}}}
\def\SigmaTheta{\Sigma_\tTheta}
\def\thetaPR{\theta^{\text{\tiny\sf  PR}}}
\def\tilthetaPR{\tilde{\theta}^{\text{\tiny\sf  PR}}}
\def\SigmaPR{\Sigma^{\text{\tiny\sf PR}}_\tTheta}
\newcommand\gobblepars{%
    \@ifnextchar\par%
        {\expandafter\gobblepars\@gobble}%
{}}
\def\wham#1{\smallbreak\pagebreak[3]%
	\noindent\textbf{#1}\ \ \gobblepars}
\def\whamc{\wham{$\circ$}}
\def\whamem#1{\wham{\emph{#1}}}
\def\Oops{\Upsilon}  
\def\haOops{\widehat{\Upsilon}}  
\def\sumalphaW{\clS^W}
\def\ScaledST{\mathcal{P}}
\def\STy{\mathcal{P}^A}
\theoremstyle{plain}
\def\state{{\sf X}}
\def\eqdef{\mathbin{:=}}
\def\Cov{\text{\rm Cov}\,}
\def\ind{\field{I}}
\def\posRe{\field{R}_+}
\def\Re{\field{R}}
\def\hatDelta{\widehat \Delta}
\def\MD{\zeta}
\def\MDstar{\MD^*}
\def\Mart{M}
\def\MartPR{H}
\def\haclE{\widehat{\clE}}
\def\haMD{\hat{\zeta}}
\def\haclT{\widehat \clT}
\def\diffalpha{\gamma}
\def\barpi{\widebar{\uppi}}
\def\bardiffalpha{\bar{\gamma}}
\def\Vdrift{\updelta}  
\def\barL{\bar L}
\def\barb{\bar b}
\def\odestate{\upvartheta}
\def\bfodestate{\bm{\odestate}} 
\def\hatodestate{\hat{\odestate}}
\def\hattheta{\hat \theta}
\def\odestatepull#1#2{\odestate_{#1\mid \mydash}^{(#2)}}
\def\ODEstate{\Uptheta}
\def\tilf{\tilde{f}}
\def\tilg{\tilde{g}}
\def\bfODEstate{\bm{\ODEstate}}
\def\tilODEstate{\widetilde{\ODEstate}}
\def\hatODEstate{\widehat{\ODEstate}}
\def\scerrorSymbol{Z}
\def\scerror#1#2{\scerrorSymbol_{#1}^{(#2)}}
\def\intDrift{Y}
\def\scerrorD#1#2{\intDrift_{#1}^{(#2)}}
\def\scerrorMart#1#2{M_{#1}^{(#2)}}
\def\scerrorpull#1#2{\scerror{#1 \mid \mydash}{#2}}
\def\scerrorpullDintDrift#1#2{\scerrorD{#1 \mid \mydash}{#2}}
\def\scerrorpullMart#1#2{\scerrorMart{#1 \mid \mydash}{#2}}
\def\sclim{X}
\def\barfinf{\barf_{\infty}}
\def\SAtime{\uptau}
\def\Lv{L_\infty^v}
\def\La{L^{\text{a}}}
\def\Lb{L^{\text{b}}}
 \title{The ODE Method for Asymptotic Statistics \\
	 in Stochastic Approximation and Reinforcement Learning%
 }
\author{Vivek Borkar\thanks{Dept. of Electrical Engineering, Indian Institute of Technology Bombay, Powai, Mumbai 400076, India.  V.B. was supported in part by a
S. S. Bhatnagar Fellowship.
}%
		\and
		Shuhang Chen\thanks{Department of Mathematics at the University of Florida, Gainesville}%
		\and
		Adithya Devraj\thanks{Department of Electrical Engineering, Stanford University, Stanford, CA 94305.}%
		\and 		
		Ioannis Kontoyiannis\thanks{
		Statistical Laboratory, University of Cambridge, UK.
		{\tt yiannis@maths.cam.ac.uk.}
I.K. was supported in part by the Hellenic Foundation for Research 
		and Innovation (H.F.R.I.) under the ``First Call for H.F.R.I. 
		Research Projects to support Faculty members and Researchers and 
		the procurement of high-cost research equipment grant'', Project 
		ID: HFRI-FM17-1034.}%
		\and 
		Sean  Meyn\thanks{Department of ECE at the University of Florida, Gainesville.  		{\tt meyn@ece.ufl.edu}.
		Financial support from ARO award W911NF2010055
and National Science Foundation grants  
CCF 2306023,
EPCN 1935389
are gratefully acknowledged.
}
}
\begin{document}

 \maketitle

\begin{abstract}

The paper concerns 
the stochastic approximation recursion,
\[
\theta_{n+1}= \theta_n + \alpha_{n + 1} f(\theta_n, \Phi_{n+1})
	    \,,\quad n\ge 0,
\]
where the {\em estimates} $\theta_n\in\Re^d$,  and $\bfPhi \eqdef \{ \Phi_n \}$ 
is a stochastic process 
on a general state space, satisfying a conditional Markov property that allows for parameter-dependent noise.  
In addition to standard Lipschitz assumptions
and conditions on the vanishing step-size sequence,
it is assumed that the associated \textit{mean flow}  
$ \ddt \odestate_t = \barf(\odestate_t)$, is globally asymptotically stable,
with stationary point denoted $\theta^*$.
\notes{Too complex to explain:
\\
where $\barf(\theta)=\Expect[f(\theta,\Phi)]$ with $\Phi$
having the stationary distribution of the chain.}
The main results are established under additional conditions on the mean flow and   a version of the Donsker-Varadhan  Lyapunov drift condition
known as~(DV3) for $\bfPhi$:

\wham{(i)}
A Lyapunov function is constructed 
that implies convergence of the estimates
in $L_4$.

\wham{(ii)}   
A functional central limit theorem (CLT) is established,  as well as the usual 
one-dimensional CLT for the normalized 
error.
Moment bounds combined with the CLT imply convergence of the normalized 
covariance
$\Expect [ z_n z_n^\transpose  ]$
to the asymptotic covariance 
$\SigmaTheta$ in the CLT,
where $z_n\eqdef (\theta_n-\theta^*)/\sqrt{\alpha_n}$.   

\wham{(iii)}    
The CLT holds for the normalized version
  $\zPR_n\eqdef  \sqrt{n} (\thetaPR_n -\theta^*)$,
of the averaged parameters
$\thetaPR_n$,
subject to standard assumptions on the step-size.
Moreover, the covariance of   $\zPR_n$
converge to $\SigmaPR$,
the minimal covariance of Polyak and Ruppert.
 
\wham{(iv)} An example is given
where $f$ and $\barf$ are linear in $\theta$, and 
$\bfPhi$
is a geometrically ergodic 
Markov chain 
but does not satisfy~(DV3).
While the algorithm is convergent,  the second moment of $\theta_n$
is unbounded and in fact diverges.

\smallskip

\noindent
\bl{\textbf{This arXiv version represents a major extension of the results in prior versions:}   
The main results now allow for parameter-dependent noise, as is often the case in applications to reinforcement learning.}
 
\smallskip

\noindent
\textit{MSC2020 subject classifications:}  Primary  62L20;   secondary 60F17, 68T05 93E35, 60J20.
\end{abstract}

%

\section{Introduction}
\label{s:intro}

The stochastic approximation (SA) method of Robbins and 
Monro~\cite{robmon51a} was designed to solve  the $d$-dimensional    
root-finding problem $\barf(\theta^*) = 0$,  
where $\barf\colon\Re^d\to\Re^d$ is defined as the expectation  
$\barf(\theta)\eqdef \Expect[f(\theta, \Phi)]$, $\theta\in\Re^d$,  with 
$f:\Re^d\times\state$ and $\Phi$ a random variable with values in $\state$.  
Interest in SA has 
grown over the past decade with increasing interest in reinforcement 
learning (RL) and other ``stochastic algorithms''~\cite{sutbar18,CSRL,bac21}.  

Algorithm design begins with recognition that $\theta^* $ is a stationary point of the \textit{mean flow},
\begin{equation}
  \ddt \odestate_t = \barf(\odestate_t).
\label{e:ODESA1}
\end{equation}
A standard assumption is that this ODE is globally asymptotically stable, so that in particular solutions converge to $\theta^*$ from each initial condition $\odestate_0\in\Re^d$.    In applications, the first step in the ``ODE method'' is to construct the function $f$ that determines $\barf$ so that stability and other desirable properties are satisfied.  

The mean flow  can be approximated using an Euler scheme.   Subject to 
standard Lipschitz conditions, it is recognized that  the Euler approximation
is robust to ``measurement noise'',  which motivates the SA recursion,
\begin{equation} 
\theta_{n+1}= \theta_n + \alpha_{n + 1} f(\theta_n, \Phi_{n+1}),
\label{e:SAa}
\end{equation}
where $\{\alpha_n\}$ is the non-negative 
step-size sequence, and the distribution of $\Phi_{n}$ converges to that of $\Phi$ as $n\to\infty$.     
Writing $ \Delta_{n+1} \eqdef f(\theta_n, \Phi_{n+1}) - \barf(\theta_n) $,  the interpretation as a noisy Euler approximation is made explicit:
\begin{equation}
\theta_{n+1} = \theta_n +\alpha_{n+1} [ \barf(\theta_n) + \Delta_{n+1} ]\,,\quad n\ge 0.
\label{e:SAintro}
\end{equation}

 Analysis  of the SA recursion traditionally proceeds by comparison with solutions of the mean flow~\eqref{e:ODESA1}~\cite{bor20a,benmetpri12}. 
This requires  a time-scaling:
Take, $\SAtime_0=0$ and define,
\begin{equation}
\SAtime_{k+1}  = \SAtime_k + \alpha_{k+1},\qquad k\ge 0.
\label{eq:tau}
\end{equation}
Two continuous-time processes are then compared:

\wham{(i)}  \textit{Interpolated parameter process}:
\begin{equation}
\text{$ \ODEstate_t  = \theta_k$   when $t=\SAtime_k$, for each $k\ge 0$,    }
\label{e:ODEstate}
\end{equation}
 and defined for all $t$ through piecewise linear interpolation.

\wham{(ii)}  \textit{Re-started ODE}:   For each $n\ge 0$,   let 
$\{ \odestate^{(n)}_t :  t\ge \SAtime_n \}$ denote the solution 
to~\eqref{e:ODESA1},  initialized according to the current parameter estimate:
\begin{equation}
\ddt \odestate^{(n)}_t  =\barf(\odestate^{(n)}_t )\,,  \quad  t\ge \SAtime_n\,,\qquad   \odestate^{(n)}_{\SAtime_n}  =\theta_n.
\label{e:ODEn}
\end{equation}

Iteration of~\eqref{e:SAintro} gives,   for any $0<n<K$,
\begin{equation}
\begin{aligned}
 \ODEstate_{\SAtime_K}    =   \theta_n   +   \sum_{i=n}^{K-1}   \alpha_{i+1}  \barf( \theta_i)  +   \sum_{i=n}^{K-1} \alpha_{i+1} \Delta_{i+1}  
=     \ODEstate_{\SAtime_n}    +    \int_{\SAtime_n}^{\SAtime_K} \barf(\ODEstate_\tau )\, d\tau + \clE_{K}^{(n)} , 
\end{aligned} 
\label{e:InterationOfSAintro}
\end{equation}
where $\clE_{K}^{(n)}  $ is the sum of cumulative disturbances and the error resulting from the Riemann-Stieltjes  approximation of the integral.  
The disturbance term $\clE_{K}^{(n)}  $ will vanish with $n$ uniformly in $K$ subject to conditions on $\{\Delta_i\}$ and the step-size.  This is, 
by definition, the ODE approximation 
of $\{\theta_n\}$~\cite{bor20a,benmetpri12}.

A closer inspection of the theory shows that two ingredients are required to establish convergence of SA: The first is stability of the mean flow~\eqref{e:ODESA1}, as previously noted.   The second requirement is that the parameter sequence $\{ \theta_n\}$ obtained from the SA recursion is bounded with probability one.     Rates of convergence through the 
central limit theorem (CLT) and the law of the iterated logarithm  require further assumptions~\cite{benmetpri12,kusyin97,kon02,bor20a,gapkra75,mokpel05}.

There are two well-known approaches to establishing boundedness of the parameter sequence based on properties of the mean-flow.   
The relationship between these two approaches is discussed 
in~\cite[Ch.~4]{CSRL}  and  in~\cite{vid23,vid22}.

\wham{Lyapunov criterion:}  The existence of a smooth function 
$V\colon\Re^d\to\Re_+$ that has a Lipschitz gradient $\nabla  V$, 
along with $\epsy>0$ 
 satisfying $\barf(\theta)^\transpose \nabla V\, (\theta)  \le -\epsy \|  \theta -\theta^*  \|^2$ for all $\theta$.    See~\cite{karmiamouwai19} for examples of this approach and further history.
 
\wham{ODE@$\infty$:}  Suppose that the following limit exists for each $\theta$:   
\begin{equation}
\barfinf(\theta)\eqdef
\lim_{r\to\infty} r^{-1}   \barf(r\theta).
\label{e:barfinfty}
\end{equation}
This defines the vector field for the ODE@$\infty$ of~\cite{bormey00a,bor20a}.   
The  so-called (see, e.g.,~\cite{bha11,rambha17}) ``Borkar-Meyn theorem'' 
states that: If this ODE   is globally asymptotically stable,   
and the process $\bfPhi=\{\Phi_n\}$ is such that the sequence $\{\Delta_n\}$ 
appearing in~\eqref{e:SAintro} is a martingale difference sequence,  
then the sequence $\{ \theta_n\}$  is bounded with probability one.  
Relaxations of the assumptions of~\cite{bormey00a} are given 
in~\cite{bha11,rambha17}.   Reference~\cite{rambha19} presents an 
extension of~\cite{bormey00a} in which $\bfPhi=\{\Phi_n\}$ 
is parameter dependent.    The setting is 
adversarial:  It is assumed that the ODE@$\infty$ is stable under 
the worst-case noise sequence.   

\smallskip

The advantage of the ODE@$\infty$ over Lyapunov techniques is that $\barfinf$ is often very simple compared to the vector field $\barf$,   and its stationary point is always the origin.   Consider for example stochastic gradient descent for the  Rastrigin loss function $\Gamma$, 
whose gradient is given by $[\nabla \Obj \, (x) ]_i=   [2  x_{i}  + b\sin(2 \uppi x_{i}) ]$, with $b>0$.    In this case $\barf =  -   \nabla \Obj $ results in    $\barfinf = -2x$.   

Of the many applications of the results in~\cite{bormey00a},
the majority are in the context of RL. However, two essential assumptions 
in this prior work appear to be often overlooked and may indeed be
violated, especially in RL applications:   
\wham{$\bullet$}
 The martingale difference sequence assumption 
in~\cite{bormey00a} holds only in very special cases, 
such as \textit{tabular} Q-learning; 
the martingale difference property 
is exploited in~\cite{sze97} to obtain error bounds for this special case.  
\wham{$\bullet$}   It is assumed in~\cite{bormey00a}  that $\bfPhi$ is a Markov chain, which rules out parameter-dependent exploration, such as $\epsy$-greedy policies.

%

\smallskip

One of the main aims of this paper is to rectify these problems.
We provide much more realistic assumptions on the 
``noise'' process $\bfPhi$, which are valid in a broad range of applications 
and lead to far stronger conclusions than anticipated in prior research.     
Specifically, we consider a {\em family} of Markov processes
$\bfPhi^\theta$ and assume that the evolution of each
$\bfPhi^\theta $ is specified by a transition kernel $P^\theta$
from family 
$\{P_\theta : \theta\in\Re^d \}$;
these define a conditional Markov property as 
in~\cite{pezheu97,rambha19}. For example, such a parameter 
dependent model may take the form, 
\begin{equation}
\Phi_{n+1}  =  g(\Phi_n,\theta_n,W_{n+1}), 
\label{e:PhiParDependent}
\end{equation}
in which $\bfmW=\{W_n\}$ is an independent and identically distributed 
(i.i.d.) sequence.    Such models are required,
e.g., in analysis of Q-learning, of  off-policy TD-learning with 
parameter-dependent exploration, 
and of actor-critic methods~\cite{sutbar18,CSRL}.

It is assumed that each transition kernel $P_\theta$   satisfies~(DV3)---a slightly weaker version of the well-known Donsker-Varadhan
Lyapunov drift condition~\cite{donvarI-II, donvarIII}---which is 
used in~\cite{konmey03a,konmey05a} 
to establish exact large deviations asymptotics for the partial sums of functions of a Markov chain; see Section~\ref{s:MCs}.  Details and connections with other drift conditions are discussed in the next section.

The main results we obtain are the following.
We hope to make the theory more accessible by conveying the 
main ideas in the body of the paper, and leaving tedious calculations 
to the Appendix.  

\newpage

\noindent
{\bf Contributions}
\begin{itemize}
\item
Subject to Lipschitz bounds on $f$ and $\barf$,  a Lyapunov function is constructed for the joint process $\{\theta_n,\Phi_n\}$.     
It satisfies a drift condition that implies boundedness of $\{ \theta_n\}$  
almost surely and in $L_4$. 
(\Cref{t:BigConvergence,t:BigBounds}.)

\item
Conditions are provided that ensure almost sure convergence for the 
parameter sequence,   and also an associated CLT:   
$z_n \darrow N(0,\SigmaTheta)$, where the convergence is in distribution, 
with $z_n \eqdef \tiltheta_n/ \sqrt{\alpha_n},\,$  
$\tiltheta_n \eqdef \theta_n - \theta^*$,   
and with the covariance $\SigmaTheta$ explicitly identified.   
A corresponding functional CLT (FCLT)  is also established  (\Cref{t:FCLT,t:CLT}). 

\end{itemize}

\noindent
 Convergence and the FCLT follow quickly from the strong bounds in~\Cref{t:BigBounds} combined with techniques in  the existing literature~\cite{bor20a}. 
The following results are not in the current literature in any form, 
except in a few very special cases discussed below.
 
\begin{itemize}

\item
Conditions are provided to ensure the sequence $\{z_n \}$ is bounded  
in $L_4$, and  convergence of the normalized covariance is established in~\Cref{t:CLT}:
\begin{equation}
	\lim_{n \to \infty} \frac{1}{\alpha_n} 
	\Expect [ \tiltheta_n\tiltheta_n^\transpose  ] 
	=	\lim_{n \to \infty}  \Expect [ z_n  z_n^\transpose  ] = \SigmaTheta \, .
\label{e:MSESA}
\end{equation}
These results have significant implications for
estimates obtained using the averaging technique of Polyak and Ruppert:   
\begin{equation}
\thetaPR_n = \frac{1}{n}\sum_{k=1}^n \theta_k \,,\qquad n\ge 1.
\label{e:thetaPR}
\end{equation}

\item
\Cref{t:PR}:
Suppose that the step-size
$ \alpha_{n + 1}  =1/ (n+1)^\rho$ is used 
in~\eqref{e:SAa}, with $\half < \rho < 1$.  Under the same conditions
as above, the CLT holds for the normalized sequence 
$\zPR_n\eqdef  \sqrt{n} \tilthetaPR_n$,
with $\tilthetaPR_n = \thetaPR_n -\theta^*$.   
Moreover,  the rate of convergence is optimal,
in that,
\begin{equation}
	\lim_{n \to \infty} n \Expect [ \tilthetaPR_n (\tilthetaPR_n)^\transpose  ] 
		= 	\lim_{n \to \infty}  \Expect [ \zPR_n (\zPR_n)^\transpose  ] = \SigmaPR \,,
\label{e:PR}
\end{equation}
where $\SigmaPR$ is   minimal in a matricial sense~\cite{rup88,pol90}.   
The proof (in~\Cref{s:PR}) also establishes the refinement,   
\begin{equation}
 \Expect [  \| A^*  \tilthetaPR_n  +  \tfrac{1}{n} \MartPR^*_{n}  \|^2  ]   \le  \bdd{t:PRbig}  \alpha_n ^2
  \,,\qquad   n\ge 1 \,, 
\label{e:PRbig}
\end{equation}
in which the matrix $A^*$ is defined in~\eqref{e:def-Q} below,   $\{\MartPR^*_k :  k\ge 1\}$ is a martingale,  and the constant $ \bdd{t:PRbig} $  is independent of $\alpha$, but depends on $\rho$ and the initial condition $(\theta_0,\Phi_0)$.  This bound easily implies~\eqref{e:PR} based on the definition of  $\SigmaPR$ in~\Cref{t:PR}.

\notes{who cares?
\\ The error term  can be reduced to    $O(n^{-1-\rho} )$ for linear SA with $f(\theta,\Phi) = A(\Phi)\theta-b(\Phi)$, subject to strong conditions on  $\{ A(\Phi_n) : n\ge 1\}$.
}

\end{itemize}

\notes{Note new emphasis here that DV3 is almost necessary}

\noindent
The discussion surrounding~\Cref{t:DV3multBdd} indicates that 
condition~(DV3) cannot be dropped. On the other hand,~(DV3) 
is not overly restrictive, e.g., it holds for linear state space 
models with disturbance whose marginal has Gaussian tails,  
and the continuous-time version holds for the Langevin diffusion 
under mild conditions~\cite{konmey05a}.  

However,~(DV3) \textit{fails} for  Markov chains on Euclidean space 
that are \textit{skip-free}, i.e., when the increments $\Phi_{n+1} - \Phi_n$,
$n\geq 0$, are deterministically bounded.   For example, 
the M/M/1 queue in discrete time is geometrically 
ergodic under the standard load assumption~\cite[Ch.~3]{CTCN}, 
but it does not satisfy~(DV3). 
This motivates the 
further contribution:

\begin{itemize}

\item
An example is given in which 
the assumptions of~\Cref{t:BigConvergence} hold;     a   scalar model 
in which $\barf(\theta)=-\theta$.   
Consequently,  $\displaystyle 
\lim_{n\to\infty} \theta_n = \theta^*=0$ with probability one from each initial condition.   
The driving noise is in some sense ideal: The (parameter-independent) Markov chain  $\bfPhi$ is constructed so that it   is reversible and geometrically ergodic.      It also satisfies the L-mixing conditions imposed in~\cite{ger92}. 
However,~(DV3) does \textit{not} hold.
Therefore, not only is there is no available theory to obtain moment bounds
but indeed it is shown in~\Cref{t:unbounded-mom} that the second moment
diverges:
$\displaystyle 
\lim_{n\to\infty}  \Expect [ \| \theta_n \|^2  ] = \infty 
$.
\end{itemize}

\wham{Prior work} 
There are   few results on convergence of moments,
such as~\eqref{e:MSESA} or~\eqref{e:PR},
in the SA literature.
Most closely related to 
this paper is~\cite{ger92}  which contains the error 
bound $\Expect[\| \tiltheta_n\|^q]^{1/q} = O(1/\sqrt{n})$ 
for \textit{every} $q\ge 1$, but this conclusion is
for a version of SA that requires
\textit{resetting}: A compact region $\Theta^0\subset\Re^d$ 
is specified,  along with a   state $\theta^0\in\Theta^0$,  
and the algorithm sets $\theta_{n+1}=\theta^0$ if 
$\theta_n\not\in\Theta^0$. It is assumed that $\theta^*$ lies in 
the interior of $\Theta^0$.    The major statistical assumptions include 
Condition~1.1: $\{f(\theta, \Phi_{n+1})\}$  {\em and} 
$\{\partial_\theta 
f(\theta, \Phi_{n+1})\}$ are L-mixing,   
and Condition~1.2:  For some $\epsy_0>0$,  
$\sup_n\Expect[ \exp(\epsy_0 f(\theta, \Phi_{n+1}) )]$ is uniformly 
bounded for $\theta$ in compact sets.    It is not clear how these 
conclusions can be extended to the present setting,
or if finer results such as~\eqref{e:PR}
are even achievable.   
 
The optimal asymptotic variance in the CLT for SA and techniques to obtain 
the optimum for scalar recursions were introduced by Chung~\cite{chu54o},
soon after the introduction of SA.  See also~\cite{sacks1958asymptotic,fab68} 
for early work,   and surveys 
in~\cite{zhu96,pezheu97,benmetpri12,kusyin97,bor20a}.   
The averaging technique came in the independent work of Ruppert~\cite{rup88}  and Polyak and Juditsky~\cite{pol90,poljud92};   see~\cite{bacmou11} for an elegant technical summary, and~\cite{durmounausam24} for the best results for linear SA. 
The reader is referred to~\cite{benmetpri12,kusyin97,bor20a}  for more history on the substantial literature on asymptotic statistics for SA.

The roots of the ODE@$\infty$ can be traced back to the fluid model techniques 
for stability of networks~\cite{dai95a,daimey95a,CTCN}, which were extended to 
skip-free  Markov chains in~\cite{formeymoupri08a}.
See~\cite{bor20a,CSRL} for further 
history since~\cite{bormey00a}.    

Much of the recent literature on convergence rates for SA 
is motivated by applications to 
RL, and seeks finite-$n$ error bounds rather than asymptotic 
results of the form considered here.   
The article~\cite{sriyin19} treats temporal difference (TD) learning 
(a particular RL algorithm) with Markovian noise, which justifies the linear 
SA recursion of the form $f(\theta,\Phi) = A(\Phi)\theta + b(\Phi)$. 
The primary focus is on the constant step-size setting, with parallel results 
for vanishing step-size sequences. 
The uniform bounds assumed in~\cite{sriyin19} will hold under uniform 
ergodicity and boundedness  
of $A$ and $b$ as functions of $\Phi$.  
Improved bounds are obtained in~\cite{durmounausamscawai21}, but 
subject to i.i.d.\ noise;  
this paper also contains a broad survey of results in this domain.  
While not precisely finite-$n$ bounds as 
in~\cite{sriyin19}, they are asymptotically tight since they 
are refinements of the CLT.     

Finite-$n$ bounds are obtained in~\cite{bacmou11,bacmou13} 
for stochastic gradient descent,
with martingale difference noise.   
This statistical assumption is relaxed in~\cite{durmounausamwai21,ruidyvan23}  
for linear SA recursions;   the assumptions of~\cite{durmounausamwai21} include~(DV3),  along with conditions on the functions $A$ and $b$ that 
are related to the bounds imposed here.
Most  of the effort is devoted to algorithms with constant step-size combined with averaging as in~\eqref{e:thetaPR}.
 
In~\cite{josheu00} a functional law of the iterated logarithm is obtained for 
\textit{linear} SA recursions with constant step-size, and subject to an L-mixing condition similar to~\cite{ger92}.
\notes{This is a sweet little paper.   Both Heunis references are great.}

%
%
%
%

  
\section{Background and Assumptions}

This section contains some of the main assumptions that will
remain in effect for most of our result, and an outline
of important background material. 

\subsection{SA model}
\label{s:SAmodel}

The stochastic process $\bfPhi=\{\Phi_n\}$ evolves on a Polish state 
space  $\state $, equipped with its Borel sigma-algebra $\bx$.  Its dynamics 
are defined by a family of transition kernels $\{ P_\theta : \theta\in\Re^d\}$.
In the special case~\eqref{e:PhiParDependent} we define  $P_\theta(x,A) 
\eqdef  \Prob\{ g(x,\theta,W_{n+1})   \in A \}$,   for each $n\ge 0$, $\theta\in\Re^d  \,, \ x \in\state\,,\  A\in \bx$.     
This is consistent with the general model in which,
\begin{equation}
\Prob\{ \Phi_{n+1}\in A \mid  \clF_n ;   \theta_n = \theta\,, \  \Phi_n=x \}  =  P_\theta(x,A)\,,    
\quad \text{with} \ \clF_n = \sigma (\theta_k\,,\ \Phi_k : k\le n ).
\label{e:condMarkov}
\end{equation}

While $\bfPhi $ is not necessarily Markovian,  analysis is based on 
consideration of the Markov chain  $\bfPhi^\theta $ with transition kernel 
$P_\theta$,  for   $\theta\in\Re^d$ fixed. For the 
realization~\eqref{e:PhiParDependent}, this evolves according to 
the recursion,  
$\Phi_{n+1}^\theta \eqdef g(\Phi_n^\theta,\theta,W_{n+1}) $,  $ n\ge 0$.
%

It is assumed that each $\bfPhi^\theta$ is geometrically ergodic  with unique probability invariant  measure~$\pi_\theta$.  The mean flow vector field is then defined as the expectation,
\begin{equation}
\barf(\theta)  =  \int f(\theta, x)  \, \pi_\theta(dx).
\label{e:barf}
\end{equation}
For any $\bx$-measurable function $g\colon\state\to\Re^m$
and any measure $\mu$ on $(\state,\clB(\state))$,
we use the compact notation $\mu(g) \eqdef\int g(x)\, \mu(dx)$,
whenever the integral is well defined.

\subsection{Markov chains}
\label{s:MCs}

 We next lay out notation for a single Markov chain $\bfPhi $ on $\state$ with transition kernel $P$.   As above,   it is assumed that $\bfPhi$ is geometrically ergodic with unique probability invariant 
measure~$\pi$.   

A pair $(C,\nu)$ are called \textit{small} if $C\in\bx$ and $\nu$ is a 
probability measure on $(\state,\bx)$, such that,
for some $\epsy>0$ and $n_0\ge 1$ the \textit{minorization condition} holds:
\[
P^{n_0}(x,A)  \ge \epsy \nu(A) \,, \qquad  \text{for $A\in\bx$ and   $x\in C$}.
\]
If the particular $\nu$ is not of interest, then we simply say that the set $C$ is small.  
It is sometimes convenient to   replace the set $C$ by a function $s\colon\state\to\Re_+$.  Then we say that $s$ is small if,
\begin{equation}
P^{n_0}(x,A)  \ge  \epsy s(x)  \nu(A) \,, \qquad  \text{for $A\in\bx$ and all $x\in\state$}.
\label{e:small}
\end{equation}
In analysis it is often more convenient to work with the 
resolvent $R:= \sum_{n=0}^\infty 2^{-n-1} P^n$, which satisfies a one-step minorization condition under~\eqref{e:small}: 
\begin{equation}
R(x,A)  \ge    s_+(x)  \nu(A)      \,, \quad  \text{for $A\in\bx$ and all $x\in\state$,}
\label{e:smallR1}
\end{equation}
 where $s_+(x) = 2^{-n_0-1} \epsy \int R(x,dy) s(y)$.

The Markov chain is called \textit{aperiodic} if there exists a probability 
measure $\nu$ and $\epsy>0$ such that, for each $x\in\state$  and $A\in\bx$,  
there is $n(x,A)\ge 1$ such that:
\[
P^{n}(x,A)  \ge \epsy \nu(A) \,, \qquad   \text{for all $n\ge n(x,A)$.}
\]
\notes{We had a nasty error here---previous inequality implied uniform ergodicity}

For any measurable function $g:\state\to [1,\infty)$, the Banach space $L_\infty^g$ is defined to be the set of measurable functions $\phi\colon\state\to\Re$ satisfying:
\[
\| \phi\|_g  \eqdef\sup_x \frac{1}{g(x)}  |\phi(x)| < \infty.
\]
For a pair of functions $g,h:\state\to [1,\infty)$,  and  any
linear operator $\haP\colon L_\infty^{g}  \to  L_\infty^{h}$,
 the induced operator norm is denoted:
\begin{equation}
	\lll \haP \lll_{g, h}
	\eqdef   \sup \Bigl\{ \frac{\|\haP \phi\|_{h}}{\|\phi\|_{g}} 
	:  \phi\in L^{g}_\infty,\ \|\phi\|_{g}\neq 0\Bigr\}.
	\label{e:Vnorm}
\end{equation}
We write $	\lll \haP \lll_{g}$ instead of $	\lll \haP \lll_{g, h}$ when $g=h$. In particular, we view of transition kernels
$Q$ as linear operators acting on functions $g$
on $\state$ via $Qg(x)=\int g(y)P(x,dy)$,
$x\in\state$.

Suppose that $\pi(g):=\int g d\pi  <\infty $.   Then the rank-one operator $\mathbbm{1} \otimes \pi$ has finite induced operator norm,  
$\lll \mathbbm{1} \otimes \pi \lll_{g, h} <\infty$, for any choice of $h$,  where,
\[
[ \mathbbm{1} \otimes \pi] \phi \, (x)    = \pi(\phi)  \,, \qquad \text{for   $x\in\state$ and $\phi\in  L_\infty^{g} $}.
\] 
Consider the centered semigroup $\{\tilP^n = 
P^n -\mathbbm{1} \otimes \pi  : n\ge 0\}$.
The chain $\bfPhi$ is called {\em $g$-uniformly ergodic} if $\pi(g)$ is finite, 
and  $\lll \tilP^n \lll_g\to 0$ as $n\to\infty$.  Necessarily the rate of convergence is geometric, and it is known that seemingly weaker definitions of geometric ergodicity are equivalent to the existence of $g$ for which 
$g$-uniform ergodicity holds~\cite{MT}.
We say $\bfPhi$ is {\em geometrically ergodic} if it is $g$-uniformly
ergodic for some $g$.

The {\em Lyapunov drift criterion}~(V4) 
{\em holds with respect
to the Lyapunov function $v:\state\to[1,\infty)$}, if:
$$
\left. 
\mbox{\parbox{.85\hsize}{\raggedright
For a small function $s$,   and constants $\Vdrift >0$, $b <\infty$,
\[
	\Expect[v(\Phi_{k+1}) - v(\Phi_k) \mid \Phi_k =x] \leq -\Vdrift v(x) + bs(x)\,, \qquad x\in\state .
\]
}}
\right\}
\eqno{\hbox{\bf (V4)}}
$$
Under~(V4) and aperiodicity, the Markov chain is $v$-uniformly 
ergodic~\cite[Ch.~15.2.2]{MT}.

We establish almost sure (a.s.) boundedness and convergence of $\{\theta_n\}$
in~\eqref{e:SAa} subject to~(V4) for the family of   transition kernels $\{P_\theta : \theta\in\Re^d \}$,
and suitable conditions on the step-size sequence and on $\barf$.  
A stronger drift condition is imposed to establish moment bounds:
$$
\left. 
\mbox{\parbox{.85\hsize}{\raggedright
For functions $V\colon\state\to\Re_+$,  $ W\colon\state\to [1, \infty)$, 
a small function $s\colon\state\to [0,1]$, and $b>0$:\,
\[
	\Expect\bigl[  \exp\bigr(  V(\Phi_{k+1})      \bigr) \mid \Phi_k=x \bigr]  
			\le  \exp\bigr(  V(x)  - W(x) +  b s(x)  \bigl) \,, \qquad x\in\state.\]
}}
\right\}
\eqno{\hbox{\bf (DV3)}}
$$
The bound~(DV3) implies~(V4) with $v=e^V$;
see~\cite{konmey05a}  or~\cite[Ch.~20.1]{MT}.

\begin{proposition}
\label[proposition]{t:v-uni-w}
\begin{subequations}
Suppose that the Markov chain $\bfPhi$ is aperiodic.  Then:
\begin{romannum}
\item
 Under~{\em (V4)}, there exists $\varrho \in (0, 1)$ and $b_v<\infty$ such that for each $g \in \Lv$,  
\begin{equation}
		\bigl| \Expect[g(\Phi_n) \mid \Phi_0= x] - \pi(g) \bigr| \leq b_v \|g\|_v v(x) \varrho^n  \,, \qquad n \geq 0\,, \ x\in \state  \, .
\label{e:v-uni-w}
\end{equation}
\item
	If~{\em (DV3)} holds   then the conclusion in~{\em (i)} holds with $v=e^V$.    Consequently,  for each $g\in L_\infty^{V+1}$ and $\kappa>0$, we can find $b_{\kappa, g}<\infty$ such that
\begin{equation}
	\begin{aligned}
			 \Expect[ |g(\Phi_n)|^\kappa \mid \Phi_0= x]   \leq b_{\kappa,g} v(x)    \,, \qquad n \geq 0\,, \ x\in \state  \, .
\end{aligned}
\label{e:v-uni-w-alpha}
\end{equation} 
\end{romannum}
\end{subequations}
\end{proposition}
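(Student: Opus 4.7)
The plan is to handle the two parts in order. Part (i) is a direct invocation of the classical Meyn-Tweedie $v$-uniform ergodicity theorem, and part (ii) reduces to part (i) via an elementary polynomial-versus-exponential comparison.

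For part (i), the bound \eqref{e:v-uni-w} is exactly the conclusion of the geometric ergodicity theorem for an aperiodic chain satisfying the drift condition (V4); this is \cite[Chapter 15.2.2]{MT}. If a sketch were required, the standard route would be: the drift condition yields exponential moments for the return time to the small set $C=\{s>0\}$ via the comparison theorem; aperiodicity together with the minorization on $C$ produces a coupling that contracts total-variation distance geometrically; and the total-variation bound is strengthened to the $v$-norm bound by integrating against $v$ and re-using the drift condition to control excursions away from $C$. The constant $\varrho\in(0,1)$ is determined by $\Vdrift$ and the minorization parameter, while $b_v$ depends on $b$, the supremum of $v$ on the small set, and $\pi(v)$.

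For part (ii), the implication (DV3) $\Rightarrow$ (V4) with $v=e^V$ is asserted in the paper just after the statement of (DV3), so part (i) immediately yields \eqref{e:v-uni-w} with $v=e^V$, and in particular $\pi(v)<\infty$. It remains to derive \eqref{e:v-uni-w-alpha}. For $g\in L_\infty^{V+1}$ I would write $|g(x)|^\kappa\le \|g\|_{V+1}^\kappa(V(x)+1)^\kappa$. Since $V\ge 0$, the map $t\mapsto(t+1)^\kappa e^{-t}$ is bounded on $[0,\infty)$ by some constant $c_\kappa<\infty$, whence
\[
|g|^\kappa \;\le\; c_\kappa\|g\|_{V+1}^\kappa\, e^V \;=\; c_\kappa\|g\|_{V+1}^\kappa\, v.
\]
This shows $|g|^\kappa\in\Lv$ with $\||g|^\kappa\|_v\le c_\kappa\|g\|_{V+1}^\kappa$. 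Applying part (i) to the nonnegative function $|g|^\kappa$, together with $v(x)\ge 1$ and $\varrho^n\le 1$, gives
\[
\Expect[|g(\Phi_n)|^\kappa\mid\Phi_0=x] \;\le\; \pi(|g|^\kappa) + b_v\,\||g|^\kappa\|_v\, v(x)\,\varrho^n \;\le\; \bigl(\pi(|g|^\kappa) + b_v c_\kappa\|g\|_{V+1}^\kappa\bigr)\, v(x).
\]
Since $\pi(|g|^\kappa)\le c_\kappa\|g\|_{V+1}^\kappa\,\pi(v)<\infty$, the bracketed quantity is finite and may be taken as $b_{\kappa,g}$.

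The only nontrivial ingredient is part (i), which is a deep but well-established theorem; part (ii) adds essentially nothing beyond the observation that $(V+1)^\kappa$ is dominated by $e^V$ when $V\ge 0$, and the fact that \eqref{e:v-uni-w} automatically furnishes a uniform-in-$n$ bound on $\Expect[|g(\Phi_n)|^\kappa\mid\Phi_0=x]$ in terms of $v(x)$.
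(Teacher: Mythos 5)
Your proposal is correct and follows essentially the same route as the paper: part (i) is the Meyn--Tweedie $v$-uniform ergodicity theorem (the paper cites \cite[Theorem 16.1.4]{MT}), and part (ii) rests on the observation that $\||g|^\kappa\|_v<\infty$ when $g\in L_\infty^{V+1}$ and $v=e^V$, exactly as you show via the elementary bound $\sup_{t\ge0}(t+1)^\kappa e^{-t}<\infty$. The only difference is that you spell out the polynomial-versus-exponential comparison and the final uniform-in-$n$ inequality explicitly, whereas the paper leaves both as a one-line remark citing \cite{konmey03a,konmey05a}.
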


\begin{proof} 
The geometric convergence~\eqref{e:v-uni-w} follows from~\cite[Th.~16.1.4]{MT},
and~\eqref{e:v-uni-w-alpha} follows from $v$-uniform ergodicity and the fact 
that $\| |g|^\kappa\|_v<\infty$ under 
our assumptions~\cite{konmey03a,konmey05a}.
\end{proof}


\subsection{Disturbance decomposition and ODE solidarity}
\label{s:ode-method}
 
The decomposition  of  {M\'etivier} and {Priouret}~\cite{metpri87} provides a representation for $\{ \Delta_{n+1} \eqdef \tilf(\theta_n,\Phi_{n+1}) :  n\ge 0\}$,   where 
 $\tilf(\theta,x) := f(\theta,x) - \barf(\theta)$ for $\theta\in\Re^d$ and $x\in\state$,   with $f$ defined in~\eqref{e:SAa} and $\barf$ defined in~\eqref{e:barf}.
Let $\haf:\Re^d\times \state \to \Re^d$ solve \emph{Poisson}'s equation:
\begin{equation}
	\label{e:fish-f}
	\Expect[\haf(\theta, \Phi^\theta_{n+1}) - \haf(\theta, \Phi^\theta_{n})\mid \Phi^\theta_n = x] = - \tilf(\theta, x)  \,, \qquad  \theta \in \Re^d\,,  \ x \in \state .
\end{equation}

 Provided a measurable solution exists, we obtain a disturbance decomposition based on the three sequences:
\begin{equation}
 \begin{aligned}
	\MD_{n+1}&  \eqdef  \haf(\theta_n, \Phi_{n+1})  - \Expect[  \haf(\theta_n, \Phi_{n+1})   \mid \clF_{n}]  \,,  
\quad \clT_{n+1} \eqdef   \uppsi(\theta_{n+1}, \Phi_{n+1})     \,,
\\	  
	 \Oops_{n+1} & \eqdef  \frac{1}{\alpha_{n+1}}  \big[  \uppsi(\theta_{n+1}, \Phi_{n+1}) - \uppsi(\theta_{n}, \Phi_{n+1})    \big] \,, 
	 \quad  
\textit{with} \ \     \uppsi(\theta,x) \eqdef  f(\theta,x) - 
\haf(\theta,x)
\end{aligned}
\label{e:noise-decomp}
\end{equation}
 Recall the sequence $\{\SAtime_K\}$ defined in~{\em (\ref{eq:tau})}.

\begin{lemma}
\label[lemma]{t:noise-decomp}
We have,
$\Delta_{n} 
= \MD_{n} - \clT_{n} + \clT_{n-1} - \alpha_{n} \Oops_{n}$,  and,
\begin{equation}
\begin{aligned}
	&	\sum_{i = n+1}^{K} \alpha_{i} \Delta_i
		 =  \Mart_{\SAtime_{K}} -\Mart_{\SAtime_{n}} +   \clE_{n,K},  \quad   0\le n <K \,,  
		\\
		\text{with} \quad
		\Mart_{\SAtime_{K}}  & \eqdef \sum_{k=1}^K  \alpha_k \MD_k\,,    \qquad 1 \le K <\infty \,,  \ \   \Mart_{\SAtime_0} =0   \,,
		\\
		 \clE_{n,K}  & \eqdef
-  \sum_{i=n+1}^{K} \alpha_{i}\Oops_{i}      + \alpha_{n+1}\clT_n	- \alpha_{K+1}\clT_{K} 
-  \sum_{i=n+1}^{K}[    \alpha_{i} - \alpha_{i+1}] \clT_{i}.
\end{aligned}
	\label{e:sum-by-parts}
\end{equation}	 
Moreover, $\{\MD_k\}$ is a martingale difference sequence and $\{\Mart_{\SAtime_K}\}$ is a martingale.    
\end{lemma}

\begin{proof}
The representation for $\{ \Delta_{n+1}\}$ follows from~\eqref{e:fish-f},  and~\eqref{e:sum-by-parts} is then obtained through summation by parts.   
\end{proof}

The disturbance decomposition in~\cite{metpri87} subsequent papers is
applied in the simpler setting in which $\bfPhi$ is Markovian~\cite{benmetpri12,kusyin97,bor20a}.   
Obtaining useful bounds on $\{ \Oops_{n} \} $ is more challenging in
our present, more general setting.
The value of~\Cref{t:noise-decomp} is clear only after we establish that $\{ \clE_{n,K} \}$ has a very small 
contribution to estimation error as compared to the martingale  
$\{\Mart_{\SAtime_K}\}$;  
much of the work in the Appendix is devoted to quantifying this claim.
 
The indexing of the martingale   $\{\Mart_{\SAtime_K}\}$ by  $\SAtime_K$ (defined in~\eqref{eq:tau}) is to facilitate \textit{ODE approximations}.     These are   defined by comparison of 
two continuous-time processes: $\bfODEstate=\{\Theta_t\}$, defined in~\eqref{e:ODEstate},  and 
the solution $\bfodestate^{(n)}=\{\odestate^{(n)}_t\}$
to the re-started ODE~\eqref{e:ODEn}.   The two processes are compared over 
time blocks of length approximately $T$,  with $T>0$ chosen sufficiently large.  

Let  $T_0=0$ and $T_{n+1} = \min \{\SAtime_k: \SAtime_k \geq T_n + T\}$. 
 The $n$-th time block is $[T_n, T_{n+1}]$ and $\displaystyle T\leq T_{n+1} - T_{n} \leq T+ \bar{\alpha} $ by construction, where   $\bar{\alpha} \eqdef \sup_k \alpha_k  $ is assumed bounded by~1 in Assumption~(A1) below. The following notation is required throughout: 
 \begin{equation}
\text{$m_0\eqdef 0$,
	 and $m_n$ denotes the integer satisfying $\SAtime_{m_n} = T_n$,\ for each $n\geq1 $. }
\label{e:mofn}
\end{equation}
\Cref{t:noise-decomp} is one step in establishing the  ODE approximation,
\begin{equation}
\lim_{n\to\infty} \sup_{T_n\le t\le T_{n+1} }  \|  \ODEstate_t   -  \odestate^{(n)}_t \| =0 \,,\qquad \mbox{a.s.}
\label{e:ode-method}
\end{equation}
This, combined with global asymptotic stability of the mean flow~\eqref{e:ODESA1}, quickly leads to convergence of $\{\theta_n\}$, provided this sequence is bounded.     Boundedness is established through examination of a scaled vector field defined next.


\subsection{ODE@\boldmath{$\infty$}}
\label{s:ode-method-infinity}

The ODE@$\infty$ technique for establishing stability of SA is based on the 
scaled vector field, denoted $\barf_{c}(\theta) \eqdef \barf({c}\theta)/{c},$ for any ${c} \geq 1$,   and requires the existence of a continuous limit:   $\barf_{\infty}(\theta)  \eqdef \lim_{c\to\infty} \barf_{c}(\theta) $ for any $\theta\in\Re^d$.  
The  ODE@$\infty$ is then denoted:
\begin{equation}
	\label{e:ode-infty}
\ddt \odestate_t = \barf_{\infty}(\odestate_t).
\end{equation}
We always have $\barf_{\infty}(0) = 0$, and asymptotic stability of this equilibrium is equivalent to global asymptotic stability~\cite{bormey00a,bor20a}.

\begin{subequations}
An ODE approximation is obtained for the scaled parameter estimates: 
On  denoting $c_n \eqdef \max\{1, \| \theta_{m_n}\|\}$, 
and  $ \hattheta_k  =  \theta_k /{c_n}$  for  each $n$ and $m_n \leq k < m_{n+1}$,  we compare two processes in continuous time on the interval $[T_n, T_{n+1})$:
\begin{align}
\hatODEstate_t  &=  \frac{1}{c_n}\ODEstate_t \,,  \quad T_n \le t  < T_{n+1},
	\label{e:rescaled-traj}
	\\
	\ddt  \hatodestate_t &= \barf_{c_n}( \hatodestate_t)   \,,   \quad  \text{with initial condition} \ \ \hatodestate_{T_n} = \hatODEstate_{T_n} = \tfrac{1}{ c_n}   \theta_{m_n }.
	\label{e:rescaled-ode}
\end{align} 
\end{subequations}

The scaled iterates $\{\hattheta_k\}$ satisfy,  for $m_n \leq k < m_{n+1} $,
\begin{equation}
	\label{e:hat-theta-sa}
\begin{aligned}
		\hattheta_{k+1} 
		&= \hattheta_k + \alpha_{k+1}f_{c_n}(\hattheta_k, \Phi_{k+1}) = \hattheta_k + \alpha_{k+1}\bigl[\barf_{c_n}(\hattheta_k) + \hatDelta_{k+1} \bigr] \,,
\end{aligned}
\end{equation}
where $f_{c_n}(\theta_k, \Phi_{k+1}) \eqdef f(c_n\theta_k, \Phi_{k+1})/c_n$ and $\hatDelta_{k+1} \eqdef \Delta_{k+1}/c_n$.  
The representation~\eqref{e:hat-theta-sa} motivates an ODE analysis similar to what is performed to establish~\eqref{e:ode-method}.  
The desired approximation is established in~\Cref{t:hat-ode-approx-as}:
\begin{equation}
\lim_{n\to\infty} \sup_{T_n\le t < T_{n+1} }  \|  \hatODEstate_t  - \hatodestate_t \| =0 \,,\qquad\mbox{a.s.}
\label{e:ode-method-infty}
\end{equation}
The short proof of~\Cref{t:bddpar} shows that~\eqref{e:ode-method-infty} implies a.s.\  boundedness of $\{ \theta_k \}$.

\section{Main Results}
 
Since we will be working with a family of Markov chains
$\{\bfPhi^\theta\}$, where each $\bfPhi^\theta$ has transition
kernel $P_\theta$ in some family $\{P_\theta:\theta\in\Re^d\}$,
we require a family of drift conditions.   
For simplicity we assume a common Lyapunov function, and   a common small set condition in the form~\eqref{e:smallR1}.  In particular,  
from now on, when we say that~(DV3) holds we mean the following:  
$$
\left. 
\mbox{\parbox{.85\hsize}{\raggedright
For functions $V\colon\state\to\Re_+$,  $ W\colon\state\to [1, \infty)$, 
  $s\colon\state\to [0,1]$, and $b>0$,
\[
	\Expect\bigl[  \exp\bigr(  V(\Phi^\theta_{k+1})      \bigr) \mid \Phi^\theta_k=x \bigr]  
			\le  \exp\bigr(  V(x)  - W(x) +  b s(x)  \bigl) \,,  
\]
for all $ x\in\state\,, \theta\in\Re^d$.   Moreover,   $s(x)>0$ for all $x$,   and 
there is a   probability measure $\nu$ on $\bx$ such that,
\begin{equation}
R_\theta (x,A)  \ge    s(x)  \nu(A) \,, \qquad  \text{for $A\in\bx$ and all $x\in\state$, $\theta\in\Re^d$.} 
\label{e:smallR}
\end{equation}%
}}
\right\}
\eqno{\hbox{\bf (DV3)}}
$$
The uniform version of~(V4) is analogous.

We require a Lipschitz continuity assumption for the family of transition kernels:     For a constant $b_d$,
and any $\theta,\theta' \in\Re^d$,    
\begin{equation}
\lll P_\theta  - P_{\theta'}  \lll_H  \le  \frac{b_d}{1+   \| \theta \| + \| \theta' \|  }   { \| \theta - \theta' \| },
\label{e:LipPtheta}
\end{equation}
where the choice of the function of $H\colon\state \to [1,\infty)$ 
depends on the context; recall~\eqref{e:Vnorm}
for notation.

In applications to RL, this Lipschitz condition  can be obtained by design.   In particular, in the stability analysis of Q-learning found in~\cite{mey24},  exploration is designed so that  $P_{r\theta} = P_\theta$  for $\|\theta\|\ge M$ and  all $r\ge 1$,   where $M\gg1$ is a design choice.  
Local Lipschitz continuity of $\{ P_\theta\}$  in $L_\infty^H$  then implies the global bound~\eqref{e:LipPtheta}.

Our main results  depend on Assumptions~(A1)--(A3) below.  Assumption~(A1) is standard in the SA literature,  and~(A3) is the starting point for analysis based on the ODE@$\infty$.  The special assumptions on $\bfPhi$ in~(A2) are less common;  especially the introduction of~(DV3).

\smallskip

\noindent
{\bf (A1)}
The non-negative gain sequence $\{\alpha_n\}$ satisfies:
\begin{equation}
	\text{$0 \leq \alpha_n \leq 1$, for all $n$,}
	\qquad
	\sum_{n=1}^{\infty} \alpha_n = \infty,
	\qquad \sum_{n=1}^{\infty} \alpha_n^2 < \infty.
		\label{e:stepsize-cond}
\end{equation}
	Moreover, with $\diffalpha_n \eqdef  \alpha_{n+1}^{-1} - \alpha_n^{-1}$,   $\diffalpha = \lim_{n \to \infty} \diffalpha_n$ exists and is finite.

\smallskip

\noindent
{\bf (A2)}
There exists a measurable function $L :\state \to \posRe$ such that for each $x\in\state$, 
\begin{equation}
	\label{e:lip-f}
\begin{aligned}
		\| f(0, x)\| &\leq L(x), \\
		\|f(\theta,x) - f(\theta', x)\| &\leq L(x)\|\theta - \theta'\| \,, \qquad \theta, \theta'\in \Re^d,
\end{aligned}
\end{equation}
with $L\colon\state\to\posRe$.   
The remaining parts of~(A2) depend on the drift condition imposed:

\whamc  Under~(V4), it is assumed that  $L^8\in \Lv$ and that~\eqref{e:LipPtheta}
holds 
with $H=v^\delta$, for any $1/4\le \delta\le 1$.

\whamc  Under~(DV3), it is assumed that $\delta_L \eqdef \|L\|_W < \infty$ (so that  
   $L\in L_\infty^W$)  and that~\eqref{e:LipPtheta}
holds 
with $H=1+V^p$ for $p=1,2$.   In addition,  for  each $r>0$,
    \begin{subequations}%
   \begin{align}
   \!   \!   \!   \!   \!
S_W(r)  &:= \{ x :  W(x)\le r \}  \quad    \text{is either small or empty for any $P_\theta$. }
\label{e:Wunbdd}
   \\
	b_V(r) & \eqdef \sup\{ V(x) :  x\in S_W(r) \}  <\infty.
\label{e:V-bounded-level}
\end{align}
\label{e:VWbdds}
\end{subequations}%

\noindent
{\bf (A3)} The scaled vector field $\barfinf(\theta)$ exists:
 $\barf_c(\theta) \to \barfinf(\theta)$ as $c \to \infty$, for each $\theta\in \Re^d$.  Moreover, the ODE~\eqref{e:ode-infty} is globally asymptotically 
stable.

\smallskip

Given $c \geq 1$, let $\phi_c(t,\theta)$ denote the solution to,
\begin{equation}
\ddt \odestate_t = \barf_c(\odestate_t)\,, \qquad  \odestate_0=\theta\,,
\label{e:scaledODEsemigroup}
\end{equation}
and define $\phi_\infty(t, \theta)$ accordingly,
so that it solves~\eqref{e:ode-infty}
 with initial condition $  \odestate_0=\theta$. 
It is known that~(A3) implies global exponential  
asymptotic stability of the ODE@$\infty$:
 
\begin{proposition}
\label[proposition]{t:contraction-ratio}
Under~{\em (A3)} there exists $ \Trelax >0$   such that $\| \phi_\infty(t, \theta)\| \leq \half \|\theta\|$ for   $t\geq \Trelax$ and every $\theta\in\Re^d$.   
Moreover:
\whamem{(i)}
There exist positive constants $b$ and $\delta$ such that,
for any $\theta\in\Re^d$ and $t\ge 0$,
\[
\| \phi_\infty(t, \theta)\|  \le b  \| \theta \|  e^{-\delta t}.
\]

\whamem{(ii)}
There exists $c_0>0$ and $\half < \Crelax < 1$ such that whenever $\|\theta\| \le 1$, 
\[
	\| \phi_c(t, \theta) \| \leq \Crelax \,, \qquad \text{for all } t\in [\Trelax, \Trelax + 1]\,, \  c\geq c_0.
\]
\end{proposition}

Part~(i) is~\cite[Lem.~2.6]{bormey00a}  and~(ii) 
is~\cite[Cor.~4.1]{bor20a}.  

\subsection{Convergence and moment bounds}  

\Cref{t:BigConvergence} is anticipated from well established   theory.

\begin{theorem}
\label[theorem]{t:BigConvergence}
  Suppose that the mean flow~\eqref{e:ODESA1} is globally asymptotically stable.    
If, in addition,~{\em (V4)} and~{\em (A1)--(A3)} hold, then the parameter sequence  $\{\theta_k\}$ converges a.s.\ to the invariant set of $\ddt \odestate_t = \barf(\odestate_t)$.
\end{theorem}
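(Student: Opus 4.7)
The plan is to proceed in two stages: first establish almost-sure boundedness of $\{\theta_n\}$ using the ODE@$\infty$ machinery, and then use the standard ODE approximation together with global asymptotic stability of $\tfrac{d}{dt}\vartheta_t = \bar f(\vartheta_t)$ to deduce convergence to its invariant set. Both stages rest on the noise decomposition of \Cref{t:noise-decomp} and the resulting representation \eqref{e:sum-by-parts} of the cumulative disturbance.

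For boundedness, I would argue by contradiction using the rescaled processes \eqref{e:rescaled-traj}--\eqref{e:rescaled-ode}. Set $c_n = \max\{1, \|\theta_{m_n}\|\}$. Assumption (A3) and continuous dependence on the vector field imply that, for any $\eta \in (0,1)$, one can choose $T$ large enough so that every solution of $\tfrac{d}{dt}\vartheta_t = \bar f_\infty(\vartheta_t)$ starting in the closed unit ball satisfies $\|\vartheta_T\| \le \eta$; since $\bar f_{c_n} \to \bar f_\infty$ uniformly on compacts and $\hat\vartheta^{(n)}_{T_n} = \hat\Theta_{T_n}$ lies in the unit ball by construction, a Gr\"onwall estimate gives $\|\hat\vartheta^{(n)}_{T_{n+1}}\| \le 2\eta$ for all large $n$. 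Combining this with the approximation \eqref{e:ode-method-infty} of \Cref{t:hat-ode-approx-as} (which is already invoked in the preamble to the theorem) yields $\|\hat\Theta_{T_{n+1}}\| \le 3\eta$ almost surely for large $n$. Choosing $\eta < 1/6$ and unwinding the scaling gives $c_{n+1} \le \max\{1, \tfrac{1}{2} c_n\}$ eventually, so $\{c_n\}$ must remain bounded, whence $\{\theta_n\}$ is bounded a.s.

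Once boundedness is in hand, I would derive an unscaled ODE approximation \eqref{e:ode-method} by the same template. On each block $[T_n, T_{n+1}]$, the iteration \eqref{e:InterationOfSAintro} shows that $\Theta_t - \vartheta^{(n)}_t$ is controlled by the cumulative disturbance $\mathcal{E}_{n,K}$ plus a Riemann error that vanishes with $\bar\alpha$. The representation \eqref{e:sum-by-parts} expresses $\mathcal{E}_{n,K}$ as a martingale increment $M_{\tau_{K+1}} - M_{\tau_{n+1}}$ plus telescoping and Abel-summation remainders involving $\mathcal{T}_k = \hat f(\theta_{k-1}, \Phi_k)$ and $\mathcal{E}^{\hat f}_k$. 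Under (V4) with $L^8 \in L_\infty^v$, \Cref{t:v-uni-w}(i) yields Lipschitz-in-$\theta$ bounds on the Poisson solution $\hat f$ in a $v$-weighted norm; on the event $\{\theta_n\}$ bounded, these bounds together with (A1) and the martingale convergence theorem imply that all four pieces of $\mathcal{E}_{n,K}$ tend to zero uniformly on each block. This yields \eqref{e:ode-method}, and then a standard argument (e.g.\ Chapter 2 of \cite{bor20a}) gives convergence of $\{\theta_n\}$ to the invariant set of $\tfrac{d}{dt}\vartheta_t = \bar f(\vartheta_t)$.

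The main obstacle is the control of $\mathcal{E}_{n,K}$ (and its scaled version needed for \eqref{e:ode-method-infty}). The martingale term is handled by its quadratic variation, which is summable because $\sum \alpha_n^2 < \infty$ and $\Expect[\|\hat f(\theta_n, \Phi_{n+1})\|^2 \mid \mathcal{F}_n]$ grows only polynomially in $\|\theta_n\|$ by \Cref{t:v-uni-w}. The delicate piece is $\mathcal{E}^{\hat f}_{i+2} = \hat f(\theta_{i+1}, \Phi_{i+2}) - \hat f(\theta_i, \Phi_{i+2})$: proving a Lipschitz estimate of the form $\|\mathcal{E}^{\hat f}_{i+2}\| \lesssim \alpha_{i+1} L(\Phi_{i+2})(1 + \|\theta_i\|)^\kappa$, and showing that the corresponding sum is $o(1)$ uniformly in each block, requires the detailed bounds on $\hat f$ developed in the paper's auxiliary lemma on the Poisson equation. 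Once this Lipschitz regularity of $\hat f$ is secured, the rest of the argument is mechanical.
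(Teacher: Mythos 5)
Your proposal tracks the paper's proof essentially verbatim: establish boundedness via the ODE@$\infty$ contraction applied to the rescaled iterates (using \Cref{t:hat-bound-as} and \Cref{t:hat-ode-approx-as}), then deduce convergence to the invariant set from the unscaled ODE approximation built on the noise decomposition in \Cref{t:noise-decomp} together with the standard ODE argument. The only small imprecisions — the opening phrase "argue by contradiction" (your argument is in fact direct), the unproved one-block growth bound needed to handle the case $c_n < c_0$ (so the recursion is really $c_{n+1} \leq \max\{C,\, c_n/2\}$ for a random but a.s.\ finite $C$, not $\max\{1, c_n/2\}$), and the precise form of the Lipschitz bound on $\clE^{\haf}$ (which actually involves $v^{1/4}(\Phi_{n+2})v^{1/4}(\Phi_{n+1})$ rather than $L(\Phi_{n+2})$) — do not alter the structure, and you flag the last one explicitly as needing the paper's Poisson-equation estimates.
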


The main challenge in the proof  is establishing boundedness of the parameter sequence.   Once this is established, convergence of $\{\theta_k\}$ to the invariant set of the ODE follows from standard arguments~\cite[Th.~2.1]{bor20a},  subject to uniform bounds on the terms in~\Cref{t:noise-decomp}.   
The proof of boundedness is established in~\Cref{s:ASbdd}, based on the ODE approximation~\eqref{e:ode-method-infty}.  
The inequalities obtained in~\Cref{s:ASbdd}  will also be used to obtain moment bounds.
 
 The subscript `{\sf r}' in $\Trelax$ stands for ``relaxation time'' for the 
ODE@$\infty$.  
Moment bounds require either a bound on  $\Trelax$, or a 
stronger bound on $L$.    Recall
$\delta_L \eqdef \| L \|_W$ was defined in~(A2).
We say that $L = o(W)$ if,   \notes{Sean to Yiannis:  the colon after "if" doesn't work here!}
\[
\lim_{r\to\infty}  \sup_{x\in\state}  \frac{ |L(x)|}{\max\{r,W(x)\}} =0.  
\]
That is,  $\lim_{r\to\infty}  \| L \|_{W_r} =0$,  
with $W_r (x):= \max\{r,W(x)\}$ for   $x\in\state$. 
These are summarized in the following two-part assumption.
\wham{(A4a)}    $\Trelax$ in~\Cref{t:contraction-ratio}
can be chosen so that   $ \Trelax < 1/(4\delta_L)$
 \qquad 
\textbf{(A4b)} 
   $L = o(W)$.
   
\begin{theorem}
	\label[theorem]{t:BigBounds}
Suppose that~{\em (DV3)} and~{\em (A1)--(A3)}  hold.    Assume in addition 
that~{\em (A4)} holds in form~{\em (a)} or~{\em (b)}.    

Then,   
$\displaystyle 	\sup_{k \geq 0}\Expect\bigl[( \| \theta_k \| + 1)^4   \exp\bigl( V(\Phi_{k +1 }) \bigr) \bigr] < \infty $.
  \notes{Sean to Yiannis:  the colon after "if" doesn't work here!  Comma is better.   Max agrees :-)
  \\
  And I got rid of the displayed equation to make the point clear.}
\end{theorem}

The bound  $ \Trelax < 1/(4\delta_L)$ assumed in Assumption~(A4a) appears difficult to validate.  It is included because it is what is essential in the proof 	of~\Cref{t:BigBounds}.  
Assumption~(A4b) implies~(A4a) provided we modify the function $W$ used 
in~(DV3).

\begin{lemma}
\label[lemma]{t:A4ab}
Suppose that~{\em (DV3)} holds, subject to the bounds~\eqref{e:VWbdds}.  Then:
\whamem{(i)}
	{\em (DV3)} holds and the bounds~\eqref{e:VWbdds} continue to hold for the pair $(V,W_r)$ for any $r\ge 1$,   with $W_r (x):= \max\{r,W(x)\}$.

\whamem{(ii)}
	If in addition~{\em (A4b)} holds, then~{\em (A4a)} also 
holds with $W$ replaced by $W_{r_0}$ for   $r_0\ge 1$ sufficiently large.
 \end{lemma}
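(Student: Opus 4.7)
The plan is to verify each claim by direct algebraic manipulation of (DV3), exploiting the fact that the excess $W_r - W$ is supported on the small set $S_W(r)$.

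For part (i), I begin with the pointwise bound
\[
W_r(x) - W(x) = \max\{r - W(x), 0\} \leq r\, \mathbbm{1}_{S_W(r)}(x).
\]
Rewriting the exponent in (DV3) as $V(x) - W_r(x) + [W_r(x) - W(x)] + b s(x)$ yields
\[
\Expect\bigl[\exp(V(\Phi_{k+1})) \mid \Phi_k = x\bigr] \leq \exp\!\bigl( V(x) - W_r(x) + b s(x) + r\, \mathbbm{1}_{S_W(r)}(x) \bigr).
\]
If $S_W(r)$ is empty there is nothing to do; otherwise $S_W(r)$ is small by \eqref{e:Wunbdd}, so $\mathbbm{1}_{S_W(r)}$ is a small function. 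Setting $s'(x) := \max\{s(x), \mathbbm{1}_{S_W(r)}(x)\} \in [0,1]$ and $b' := b + r$ gives $b s(x) + r\, \mathbbm{1}_{S_W(r)}(x) \leq b' s'(x)$, which is (DV3) for the pair $(V, W_r)$. For the conditions in \eqref{e:VWbdds}, observe that $W_r \geq W$ forces $S_{W_r}(r') \subseteq S_W(r')$ for every $r' > 0$; hence each level set is small or empty, and $\sup_{x \in S_{W_r}(r')} V(x) \leq b_V(r') < \infty$.

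For part (ii), the relaxation time $\Trelax$ is determined by the ODE@$\infty$ \eqref{e:ode-infty} alone, so it is unaffected by the substitution $W \mapsto W_r$. The constant $\delta_L$, however, does change: writing $\delta_L^{(r)} := \|L\|_{W_r}$, assumption (A4b) is precisely the statement that $\delta_L^{(r)} \to 0$ as $r \to \infty$. I therefore choose $r_0 \geq 1$ large enough that $\delta_L^{(r_0)} < 1/(4 \Trelax)$, which is (A4a) for the pair $(V, W_{r_0})$. Part (i) guarantees that (DV3) continues to hold simultaneously for this pair.

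The only point requiring a brief justification is that $\max\{s, \mathbbm{1}_{S_W(r)}\}$ qualifies as a small function; this is routine under aperiodicity, since any function dominated by a sum of small functions is itself small after suitable adjustment of the minorization measure and time index. The rest is bookkeeping, and I do not expect any serious obstacle.
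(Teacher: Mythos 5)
Your proof is correct and is exactly the unwinding of definitions that the paper intends by its one-line ``follows from the definitions''. The decomposition $W = W_r - (W_r - W)$ together with the pointwise bound $W_r - W \le r\,\mathbbm{1}_{S_W(r)}$ handles part (i), and part (ii) is just the observation that $\|L\|_{W_r}\to 0$ under (A4b) while $\Trelax$ (being a property of the ODE@$\infty$ alone) is unchanged; the single fact you invoke that goes slightly beyond the definitions---that $\max\{s,\mathbbm{1}_{S_W(r)}\}$ is a small function---is indeed routine under the paper's aperiodicity hypothesis, since one can push both minorizations forward by $P^m$ for $m$ large to obtain a common minorization time and a common minorizing measure $\nu$, at which point the pointwise maximum of the two lower bounds gives smallness directly.
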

 
 The proof of~\Cref{t:A4ab} follows from the definitions.

\notes{New material here to reinforce that we are bringing new ideas}
The bound~(DV3) is used  in multiplicative ergodic theory for Markov 
chains~\cite{konmey03a,konmey05a}.  Motivation in the present paper is similar, and made clear from consideration of the linear scalar recursion, with $f(\theta,\Phi) = a(\Phi)\theta$,  so that  for $m,n\ge 0$,
\begin{equation}
| \theta_{n+m} |  =\Bigl| \theta_m \prod_{k=m+1}^{n+m}  (1+\alpha_k a(\Phi_k))  \Bigr|   \le   | \theta_m| \exp\Bigl( \sum_{k=m+1}^{n+m}  \alpha_k a(\Phi_k)  \Bigr).
\label{e:SAscalarBdd}
\end{equation}
This suggests that
strong assumptions are required to obtain bounds on $L_p$ moments of the parameter sequence even in this simplest of examples.
The following is a crucial step in the proof of~\Cref{t:BigBounds}.  
Its proof   is contained in~\Cref{s:MC}.  
 
\begin{proposition}
	\label[proposition]{t:DV3multBdd}
The following holds under~{\em (DV3)}:    
For  any initial conditions $\theta_0,\Phi_0$, 	
  any $n_0, n$, 
		and any non-negative sequence $\{\delta_k : 1\le k\le n-1 \}$ satisfying $\sum \delta_k\le 1$,  
\begin{equation} 
	\Expect\Bigl[\exp\Bigl( V(\Phi_{n_0+n}) +\sum_{k=n_0}^{n_0+n-1}   
	\delta_k W(\Phi_k)  \Bigr)  \Big| \clF_{n_0}  \Bigr]
		\le  b_v^2  e^{b} v(\Phi_{n_0})  \quad a.s. ,
		\label{e:DV3multBdd+}
\end{equation}  
where $b_v>0$ is as
in~\Cref{t:v-uni-w}, and $b>0$ is the constant in~{\em (DV3)}.
\end{proposition}

\subsection{Asymptotic statistics}

We now turn to rates of convergence, and for this it is assumed that the ODE~\eqref{e:ODESA1} is globally asymptotically stable with unique equilibrium denoted  $\theta^* \in\Re^d$.
The accent `tilde'  is used to represent error: We write $\tilODEstate_{\SAtime_k}^{(n)}   \eqdef \ODEstate_{\SAtime_k}  - \odestate^{(n)}_{\SAtime_k}$ and   $\tiltheta_k \eqdef \theta_k - \theta^*$.     
Two normalized error sequences are considered:
\begin{equation}
 z_k  \eqdef \frac{1}{\sqrt{\alpha_k}}  \tiltheta_k     \,,
  \qquad
\scerror{\SAtime_k}{n}  \eqdef\frac{1}{\sqrt{\alpha_k}}  \tilODEstate_{\SAtime_k}^{(n)}  \,,\qquad k\ge n\, .
\label{e:scerror} 
\end{equation}%
The domain of the latter is extended to  all $\SAtime\ge   \SAtime_n  $  by piecewise linear interpolation.  
 It is convenient to move the origin of the time axis as follows:   
 \begin{equation} 
\scerrorpull{\SAtime}{n} \eqdef \scerror{\SAtime_n +\SAtime}{n}     \,, \qquad \SAtime \ge  0.
\label{e:scerror_norm}
\end{equation}

The two fundamental approximations of interest here are:

\smallskip

\noindent
\textbf{Central Limit Theorem} (CLT):  For any bounded continuous 
function $g\colon\Re^d\to\Re$,
\begin{equation}
\lim_{k\to\infty} \Expect[ g( z_k ) ]  =\Expect[g(\sclim)],
\qquad\mbox{where}\; X\sim N(0,\SigmaTheta),
\label{e:CLT}
\end{equation}
for an appropriate covariance matrix $\SigmaTheta$.

\noindent
\textbf{Functional Central Limit Theorem} (FCLT):  The sequence 
of stochastic processes $\{\scerrorpull{\varble}{n} : n\ge 1 \}$ converges in distribution to the solution of the Ornstein-Uhlenbeck equation,
\begin{equation}
 d\sclim_{t} = F \sclim_t \,dt+ D\,dB_{t} \,, \qquad \sclim_0 =0\, ,
 \label{e:FCLT}
\end{equation}  where 
  $\bfmB=\{B_t\}$ a standard $d$-dimensional Brownian motion,  and $F,D$  
are  $d\times d$ matrices.
  
\smallskip
  

Consider the family of martingale difference sequences, 
parametrized by $\theta$,
\begin{equation}
\MD_n (\theta) \eqdef \haf(\theta, \Phi_n^\theta) - \Expect[\haf(\theta, \Phi_n^\theta)  \mid \clF_{n-1}],
\quad \text{  $\bfPhi^\theta$ is Markovian with tr.\ kernel $P_\theta$. }
\label{e:MD}
\end{equation} 
The steady-state  covariance matrices are denoted:
\begin{equation}
	\label{e:def-Q}
\begin{aligned}
\Sigma_{\MD}(\theta)  &\eqdef \Expect_{\pi_\theta} [ \MD_n(\theta)   \MD_n(\theta) ^\transpose]  \,,
	\quad &  \Sigma_{\MD}^*& =  \Sigma_{\MD}(\theta^*).
\end{aligned} 
\end{equation}
The CLT is obtained under the assumption that 
$A^* $ is Hurwitz, where $A^* \eqdef 
A(\theta^*)$ with $A=A(\theta)$ given in~(A5a) below. 
This assumption combined with
global asymptotic stability of the mean flow 
 implies that  it is globally \textit{exponentially} asymptotically stable~\cite[Prop.~A.11]{laumey24b}:
for some     $\bexp< \infty$, and $\rhoexp>0$,
\begin{equation}
\| \phi(t;\theta)  -\theta^* \|   \le \bexp\| \theta -\theta^* \| e^{ -\rhoexp t  } \,,  \qquad    \theta\in\Re^d\,, \   t\ge 0  
\label{e:EAS}
\end{equation}
This bound is required in consideration of bias, starting with the approximation $\Expect[\theta_k -\theta^*] \approx 
\phi(\SAtime_{k-n};  \theta_n) - \theta^*$    for large $n$ and all $k\ge n$. For the CLT to hold, the bias must decay faster than $1/\sqrt{\alpha_k}$.  Assumption~(A5b) is introduced to ensure this.

\smallskip

\noindent
{\bf (A5a)}
$\barf:\Re^d \to \Re^d$ is continuously differentiable in $\theta$, and the Jacobian matrix $A = \partial \barf$ is  uniformly bounded and uniformly Lipschitz continuous.

\smallskip

\noindent
{\bf (A5b)}   The step-size is $\alpha_n = 1/n^\rho$ with $\half <\rho\le 1$,   and~\eqref{e:EAS} holds
with   $\rhoexp>0$.  
It is furthermore assumed that  $\rhoexp>1/2$ in the special case   $\rho=1$.
 
\begin{theorem}[FCLT]
\label[theorem]{t:FCLT}
Suppose~{\em (DV3)},~{\em (A1)--(A4)} and~{\em (A5a)} hold.   
Assume moreover that the mean flow~\eqref{e:ODESA1} is globally asymptotically stable, and that
  $ \tfrac{\diffalpha}{2}I + A^* $ is Hurwitz, where $A^* \eqdef 
	A(\theta^*)$ with $A=A(\theta)$ given in~{\em (A5a)}
	and $\diffalpha$   defined in~{\em (A1)}.    \notes{The previous statement was scrambled!   We made no reference to SA, only to the SDE~\eqref{e:FCLT}!!}
Then, the FCLT holds:  $\{ \scerrorpull{\SAtime}{n} :  \SAtime\ge 0 \}$ converges in distribution to the solution of~\eqref{e:FCLT}, 
with $F = \tfrac{\diffalpha}{2}I + A^*$,   $D$ any solution to $ \Sigma_{\MD}^* = D D^\transpose$,  and $\SigmaTheta>0$  is the unique solution to the Lyapunov equation:
\begin{equation}
		[\tfrac{1}{2} \diffalpha I+A^*] \SigmaTheta + \SigmaTheta [\tfrac{1}{2} \diffalpha I+A^*]^\transpose +  \Sigma_{\MD}^* = 0.
\label{e:Sigmatheta}
\end{equation} 
\end{theorem}

We write $h =o( \upomega)$ if $\displaystyle \lim_{r\to\infty}  \sup_{\theta \in \Re^d}  \frac{ |h(\theta )|}
{\max\{r,\upomega (\theta)\}} =0  $,  for functions $h , \upomega:\Re\to\Re_+$.
We can now establish a strong version of the CLT which, in addition
to weak convergence, includes convergence of the 
normalized covariance.

\begin{theorem}[CLT]
\label[theorem]{t:CLT}
Suppose that~{\em (A5b)} holds, along with the assumptions of~\Cref{t:FCLT}.  
Then the CLT~\eqref{e:CLT} holds,  with asymptotic covariance given in~\eqref{e:Sigmatheta}.
Moreover, the set of functions for which~\eqref{e:CLT} holds  includes any 
continuous function $g\colon\Re^d\to\Re$ satisfying $\|g\|=o(\upomega)$ with $\upomega(\theta) = \|\theta\|^4$ for $\theta\in\Re^d$. 
 In particular, the following limit holds:
\[
		\lim_{n \to \infty} \frac{1}{\alpha_n} \Expect\bigl[ \tiltheta_n\tiltheta_n^\transpose \bigr] = \SigmaTheta.
\]
\end{theorem}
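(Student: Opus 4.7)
I would derive the CLT from the FCLT in three steps: first, establish the CLT for bounded continuous $g$; second, strengthen to test functions of quartic growth $g=o(\|\theta\|^4)$ via a sharp fourth-moment bound on $z_k$; finally, specialize to $g(\theta)=\theta_i\theta_j$ to obtain the normalized covariance limit.

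\textbf{Step 1 (Bounded continuous $g$).} I would begin by noting that the FCLT of \Cref{t:FCLT} gives $\scerrorpull{\cdot}{n}\Rightarrow X$ on every $[0,T]$, so for each fixed $t$, $\scerrorpull{t}{n}\Rightarrow X_t\sim N(0,\Sigma_t)$ as $n\to\infty$. Since $F=\tfrac{\diffalpha}{2}I+A^\ocp$ is Hurwitz, $\Sigma_t\to\Sigma_\theta$ (solving \eqref{e:Sigmatheta}), and hence $X_t\Rightarrow X_\infty\sim N(0,\Sigma_\theta)$ as $t\to\infty$. To identify $z_k$ with a suitable $\scerror{\SAtime_k}{n}$, I would write
\[
z_k=\scerror{\SAtime_k}{n}+\frac{1}{\sqrt{\alpha_k}}\bigl(\odestate^{(n)}_{\SAtime_k}-\theta^\ocp\bigr),\qquad m_n\le k,
\]
and bound the bias using \eqref{e:EAS}: $\|\odestate^{(n)}_{\SAtime_k}-\theta^\ocp\|\le\bexp e^{-\rhoexp(\SAtime_k-T_n)}\|\tiltheta_{m_n}\|$. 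With $\alpha_k\asymp k^{-\rho}$, any $t_k\uparrow\infty$ satisfying $e^{-\rhoexp t_k}/\sqrt{\alpha_k}\to 0$ (e.g.\ $t_k=\kappa\log k$ with $\kappa>\rho/(2\rhoexp)$) kills the bias in probability, using the uniform $L^2$ bound on $\tiltheta_{m_n}$ supplied by \Cref{t:BigBounds}. Choosing $n_k$ as the largest $n$ with $T_n+t_k\le\SAtime_k$, both $n_k\to\infty$ and $t_k\to\infty$; a standard diagonal argument combining the FCLT (fixed $t$, $n\to\infty$) with $X_t\Rightarrow X_\infty$ then yields $z_k\Rightarrow X_\infty$, i.e.\ \eqref{e:CLT} for bounded continuous $g$.

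\textbf{Step 2 (Upgrade to $g=o(\|\theta\|^4)$).} The crux is the sharp moment bound
\[
\sup_{k\ge 0}\alpha_k^{-2}\Expect\bigl[\|\tiltheta_k\|^4\bigr]<\infty,\qquad \text{equivalently, } \sup_k\Expect[\|z_k\|^4]<\infty.
\]
Linearizing via (A5a), $\barf(\theta_k)=A^\ocp\tiltheta_k+R_k$ with $\|R_k\|\le C_A\|\tiltheta_k\|^2$, so the recursion reads $\tiltheta_{k+1}=(I+\alpha_{k+1}A^\ocp)\tiltheta_k+\alpha_{k+1}[R_k+\Delta_{k+1}]$. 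I would expand $\Expect[\|\tiltheta_{k+1}\|^4]$ and insert the noise decomposition of \Cref{t:noise-decomp}: the martingale piece $\MD$ contributes $O(\alpha_{k+1}^2\Expect[\|\tiltheta_k\|^2])$-type variance, while the correction terms $\clT,\clE^{\haf}$ are handled by summation-by-parts together with the (DV3)-driven bounds on $\haf$ and the uniform fourth-moment control of $\theta_k$ from \Cref{t:BigBounds}. The Hurwitz property of $\tfrac{\diffalpha}{2}I+A^\ocp$ should then produce a recursive inequality of the form
\[
\alpha_{k+1}^{-2}\Expect[\|\tiltheta_{k+1}\|^4]\le(1-c\alpha_{k+1})\alpha_k^{-2}\Expect[\|\tiltheta_k\|^4]+C,\qquad c,C>0,
\]
which iterates to the desired uniform bound. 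Since any continuous $g=o(\|\theta\|^4)$ satisfies $|g(\theta)|\le M_\epsilon+\epsilon\|\theta\|^4$ for each $\epsilon>0$, the family $\{g(z_k)\}$ is uniformly integrable, so Step 1 extends to all such $g$.

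\textbf{Step 3 and main obstacle.} Specializing Step 2 to $g(\theta)=\theta_i\theta_j$ (continuous, $O(\|\theta\|^2)=o(\|\theta\|^4)$) yields $\alpha_k^{-1}\Expect[\tiltheta_k\tiltheta_k^\transpose]\to\Sigma_\theta$, completing the proof. The main obstacle is the moment bound of Step 2: \Cref{t:BigBounds} supplies only $\sup_k\Expect[\|\theta_k\|^4]<\infty$, which is a factor $\alpha_k^2$ weaker than what is needed. Closing the recursion at the sharper rate $\Expect[\|\tiltheta_k\|^4]=O(\alpha_k^2)$ demands careful tracking of cross terms between the Markovian-noise components $\MD,\clT,\clE^{\haf}$ from \eqref{e:noise-decomp}, and relies on both the Hurwitz condition on $\tfrac{\diffalpha}{2}I+A^\ocp$ and, when $\rho=1$, the strengthened bound $\rhoexp>1/2$ in (A5b), which together guarantee that the deterministic linearization error does not dominate the normalized fluctuations.
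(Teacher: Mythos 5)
Your Steps 1 and 3 match the paper's overall strategy: use the FCLT to obtain \eqref{e:CLT} for bounded continuous test functions by comparing $z_k$ with a restarted process $z^{(n)}_k$ (via the bias estimate from \eqref{e:EAS}), then extend to $g = o(\varpi)$ via uniform integrability, then specialize to quadratics. The genuine gap is in your Step 2, and it is the crux of the whole argument.

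\textbf{The moment bound is not proved by a one-step recursion, and the one you propose does not close.} The paper establishes $\sup_k\|z_k\|_4 < \infty$ in two stages: \Cref{t:tight} gives the block-level bound $\sup_n \sup_{n\le k\le w_n}\|z_k^{(n)}\|_4 < \infty$, and \Cref{t:zCLTbdd} transfers this to $\sup_k \|z_k\|_4 < \infty$ via the between-block contraction
\[
\|z_k\|_4 \le \|z^{(w^-_k)}_k\|_4 + \xi^-_k(T)\,\|z_{w^-_k}\|_4,
\]
where $\sup_k \xi^-_k(T)\to 0$ as $T\to\infty$ by \Cref{t:EAScontraction}; this is exactly where (A5b) (and the strengthened $\rhoexp>1/2$ when $\rho=1$) is used. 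Your proposed per-step inequality
$\alpha_{k+1}^{-2}\Expect[\|\tiltheta_{k+1}\|^4]\le(1-c\alpha_{k+1})\alpha_k^{-2}\Expect[\|\tiltheta_k\|^4]+C$
cannot be obtained directly with the Markovian noise decomposition of \Cref{t:noise-decomp}: the terms $-\clT_{k+2}+\clT_{k+1}$ are $O(1)$ on each step, so $\alpha_{k+1}(\clT_{k+1}-\clT_{k+2})$ contributes $O(\alpha_{k+1})$ to $\tiltheta_{k+1}-\tiltheta_k$. The resulting cross term in $\Expect[\|\tiltheta_{k+1}\|^4]$ is of order $\alpha_{k+1}\|\tiltheta_k\|_4^3\|\clT\|_4 = O(\alpha_k^{5/2})$ under the target $\|\tiltheta_k\|_4 = O(\sqrt{\alpha_k})$, which exceeds the $O(\alpha_k^2)$ budget you need and destroys the recursion. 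These terms only become small after summation by parts across a block of steps, precisely because $\alpha_{j+1}-\alpha_{j+2}=O(\alpha_j^2)$; you invoke summation-by-parts, but that is incompatible with a one-step recursion and amounts to moving to the paper's block argument. A different fix would be a change of state variable that absorbs the Poisson-equation correction (e.g., $\tiltheta_k + \alpha_k\haf(\theta_{k-1},\Phi_k)$), converting the telescoping terms into $O(\alpha_k^2)$ per-step remainders plus a martingale; this is a standard device, but your proposal does not perform it, and without it the recursive inequality simply does not hold.

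A secondary point: in your formulation, it is the Hurwitz condition on $\tfrac{\diffalpha}{2}I+A^\ocp$ that would produce the contraction in the one-step recursion, not $\rhoexp$. You then invoke $\rhoexp>1/2$ ``when $\rho=1$'' for the ``deterministic linearization error'', which is correct in spirit but matches the role of (A5b) in the paper's block-contraction via \Cref{t:EAScontraction}, not in a per-step recursion. Reconciling these two ingredients is exactly what the block structure of \Cref{t:tight} and \Cref{t:zCLTbdd} accomplishes.
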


\Cref{t:CLT}  suggests that the step-size $\alpha_{n+1} = \alpha_0/(n+1)$ 
is required 
to obtain the optimal convergence rate of $O(1/n)$ for the mean-square error  
(subject to the constraint that $ \tfrac{\alpha_0}{2}I + A^* $ is Hurwitz).   
In fact, it is far better to use a larger step-size sequence and employ 
averaging via~\eqref{e:thetaPR}.
This is made clear in~\Cref{t:PR}.

One representation of the covariance matrix $\SigmaPR$ appearing in~\eqref{e:PR}  is based on the stationary version of the Markov chain
 with transition kernel $P_{\theta^*}$, denoted $\{  \Phi^*_{k}  : k \in\intgr\}$.    Let 
$\Sigma_{\Delta}^* $ denote  the asymptotic covariance of $\{\Delta_k^* \eqdef f(\theta^*, \Phi^*_{k})  : k \in\intgr\}$,   and $G \eqdef - ( A^* )^{-1}$
the \textit{stochastic Newton-Raphson gain} of Ruppert~\cite{rup85}. 
\Cref{t:PR} that follows justifies the representation,
 \begin{equation}
\SigmaPR =   G  \Sigma_{\Delta}^* G^\transpose  .
\label{e:PRrep}
\end{equation}
 
The standard definition of asymptotic covariance gives:
 \begin{equation}
\begin{aligned}
\Sigma_{\Delta}^* = \sum_{k=-\infty}^\infty \Expect [  f(\theta^*, \Phi^*_0)f(\theta^*, \Phi^*_{k})^\transpose]
=  \Sigma_{\MD}^* .
\end{aligned}
\label{e:SigmaMD}
\end{equation}
We also have $\Sigma_{\Delta}^* = \Sigma_{\MD}^*$  (defined in~\eqref{e:def-Q})---a consequence of the   noise decomposition~\eqref{e:noise-decomp}.
 
It is well known that $\SigmaPR$ is the optimal achievable covariance for SA  when $\{ \Delta_{n+1} \}$  is a martingale difference sequence, and this is achievable using the averaging technique of 
Polyak and Ruppert~\eqref{e:thetaPR} (recall the history at 
outlined at the close of~\Cref{s:intro}).  The following extends this 
result to the far more general setting of this paper.  
 
\notes{Might need to explain how this is optimal.  Not Cramer Rao, as some suggest.
\\
June 24, 2023:  I dropped the minus sign on $G$ in the submission.
Also, we should write $\Sigma_{\Delta}^* = \sum_{k=-\infty}^\infty \Expect_{\pi_{\theta^*}} [ \Delta_k^*  \{\Delta_k^*\} ^\transpose]=  \Sigma_MD^*
$
}
 
\begin{theorem}[Optimizing asymptotic covariance]
\label[theorem]{t:PR}
Suppose the assumptions of~\Cref{t:CLT} hold.   Then,~\eqref{e:PR}   holds for the estimates~\eqref{e:thetaPR} with 
$
\SigmaPR  
$ given in~\eqref{e:PRrep}.
\end{theorem}

\Cref{t:PR} improves upon~\Cref{t:CLT} in two respects: The mean square error convergence is accelerated to $O(1/n)$ rather than $O(\alpha_n)$,  
  and the asymptotic covariance is optimal -- that is, minimal.   Its proof 
   follows from~\Cref{t:PRbig}, in which the coupling bound~\eqref{e:PRbig}  is established.

\notes{misplaced here:
 The constant $ \bdd{t:PRbig}  $ vanishes if   $f(\theta,\Phi)$ is linear as a function of $\theta$, and also 
 $\{ \Oops_n\}$ defined in~\eqref{e:noise-decomp} is identically zero.  
 }
 
 
\subsection{Counterexample}
\label{s:counterexample}

Consider the M/M/1 queue with uniformization,  that is,
a reflected random 
walk  $\bfmQ =\{Q_n\}$ on $\state=\{0,1,2,\dots\}$,
\begin{equation}
Q_{n+1} = \max(0, Q_n + D_{n+1})  \,, \qquad n\ge 0\,,
\label{e:RRW}
\end{equation}
in which $\bfmD$ is i.i.d\ on $\{-1,1\}$ with $\alpha = \Prob\{D_k=1\}$,   $\mu = 1-\alpha = \Prob\{D_k= - 1\}$.
This is a reversible, geometrically ergodic Markov chain 
when $\rho=\alpha/\mu<1$, 
with geometric invariant measure $\pi$ and with $\displaystyle \eta \eqdef \Expect_{\pi}[Q_n] = \rho/(1-\rho)$. 

However, the drift inequality~(DV3) fails to hold for any unbounded 
function $W$~\cite{konmey05a}.   \notes{We should be more specific, but I'm feeling lazy.}

Consider the scalar stochastic approximation recursion,  
\begin{equation}
	\theta_{n+1} = \theta_n + \tfrac{1}{n+1} \{ (Q_{n+1} - \eta-1)\theta_n + \CEdist_{n+1} \},  \quad \theta_0\in\Re\,,
	\label{e:sa-mm1}
\end{equation}
where $\bfmW=\{ \CEdist_{n} \}$ is i.i.d.\ and independent of $\bfmQ$, with Gaussian marginal $N(0,1)$. 
The mean vector field associated with~\eqref{e:sa-mm1} is linear,
\[
\barf(\theta) \eqdef \Expect_\pi\bigl[(Q_{n+1} - \eta -1)\theta + \CEdist_{n+1} \bigr] = -\theta,
\]
so that~\Cref{t:BigConvergence} implies convergence.   Moreover, this example satisfies all of the assumptions imposed in~\cite{ger92},  so that $L_p$ bounds hold   subject to the resetting step imposed there.

To obtain moment bounds for~\eqref{e:sa-mm1}
without resetting,  it is helpful to compare solutions:
For two initial conditions $\theta_0^1, \theta_0^2$ we obtain a bound similar 
to~\eqref{e:SAscalarBdd}:     \notes{New equation to help reader see why this is hard}
\[
| \theta_{n}^1 - \theta_{n}^2 |  \le   | \theta_0^1-\theta_0^2| \exp\Bigl( \sum_{k=1}^{n}  \alpha_k [Q_{k} - \eta-1]  \Bigr) \,. 
\]
We find that the right-hand side cannot admit useful bounds for all $\rho<1$.

\begin{proposition}
\label[proposition]{t:unbounded-mom}
The following hold for the  SA recursion~\eqref{e:sa-mm1}:
\whamem{(i)} 	For all $\rho   < 1$, we have
$\displaystyle
		\lim_{n\to \infty}  \theta_n =\theta^* = 0$ a.s.
		\quad
\textbf{\emph{(ii)}}  But if $\rho > 1/2$, then
$\displaystyle
		\lim_{n\to \infty} \Expect[\theta_n^2] = \infty$.  
\end{proposition}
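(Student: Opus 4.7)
This is a direct application of \Cref{t:BigConvergence}, so the task is to verify its hypotheses for the Markov chain $\Phi_n = (Q_n, W_n)$ whose transition combines the M/M/1 dynamics of $\bfQ$ with the i.i.d.\ Gaussian component $\{W_n\}$. Assumption~(A1) is immediate for $\alpha_n = 1/(n+1)$, with $\diffalpha = 1$. Assumption~(A3) is trivial since $\barf(\theta) = -\theta$, hence $\barfinf = \barf$, and the ODE $\ddt\odestate_t = -\odestate_t$ is globally exponentially asymptotically stable. For (V4) I would take the Lyapunov function $v(q,w) = \beta^q \exp(w^2/(2\sigma^2))$ with $\beta \in (1, 1/\rho)$ and $\sigma > 1$: the geometric part handles the M/M/1 drift (with small set $\{0\}$) via Foster--Lyapunov, while the Gaussian part only contributes a constant multiplicative factor since $\{W_n\}$ is i.i.d. The Lipschitz bounds in (A2) hold with $L((q,w)) = |q-\eta-1| + |w|$, and $L^8 \in \Lv$ because $|q-\eta-1|^8$ is dominated by $\beta^q$ while $|w|^8$ is dominated by $\exp(w^2/(2\sigma^2))$ for $\sigma > 1$. \Cref{t:BigConvergence} then yields $\theta_n \to \theta^\ocp = 0$ almost surely.

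\textbf{Plan for Part (ii).} The starting point is an exact formula for $\Expect[\theta_n^2]$ obtained by conditioning on $\clF^Q_\infty := \sigma(Q_j : j \ge 0)$. Because $\{W_n\}$ is i.i.d.\ $N(0,1)$ and independent of $\bfQ$, the recursion is linear-Gaussian in $W$ given $\clF^Q_\infty$, and iterating gives
\[
\theta_n = \theta_0 A_{0,n} + \sum_{k=1}^n \alpha_k W_k A_{k,n}, \qquad A_{k,n} := \prod_{j=k+1}^n \bigl(1 + \alpha_j(Q_j - \eta - 1)\bigr).
\]
Orthogonality of the Gaussian summands then yields
\[
\Expect[\theta_n^2] = \theta_0^2\, \Expect[A_{0,n}^2] + \sum_{k=1}^n \alpha_k^2\, \Expect[A_{k,n}^2],
\]
and since every term is non-negative, it suffices to prove that the right-hand side diverges.

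Next, recast $\Expect[A_{k,n}^2]$ as a Feynman--Kac expectation: with positive operators $(\widetilde T_j g)(q) := \Expect[(1+\alpha_j(Q_j-\eta-1))^2\, g(Q_j) \mid Q_{j-1} = q]$, one has $\Expect[A_{k,n}^2] = \Expect[(\widetilde T_{k+1}\cdots\widetilde T_n \mathbf{1})(Q_k)]$. Following \cite{dufmey14}, compare the composed semigroup against a geometric test function $h(q) = \beta^q$ with $\beta \in (1, 1/\rho)$; a one-step calculation yields, for $q \ge 1$,
\[
(\widetilde T_j h)(q) = \beta^q \gamma_\beta \bigl[1 + 2\alpha_j(q - \eta_\beta) + O(\alpha_j^2)\bigr],
\]
with $\gamma_\beta = \alpha\beta + \mu/\beta < 1$ and shifted centre $\eta_\beta > 0$. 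The contractive prefactor $\gamma_\beta$ alone would give $\Expect[A_{k,n}^2] = O((k/n)^2)$, but the linear-in-$q$ correction supplies additional amplification whenever $\bfQ$ visits high levels. Decomposing the queue trajectory into busy-period excursions from zero, using the stationary tail $\Prob_\pi(Q \ge M)$ of order $\rho^M$ together with the fact that a busy period starting at height $M$ has duration of order $M/(1-\rho)$, one shows that an excursion reaching level $M$ in the bulk of $[k,n]$ contributes a multiplicative amplification of order $\exp(\mathrm{const}\cdot M^2/n)$ to $A_{k,n}^2$. Optimising the trade-off between the probability of order $\rho^M$ of such an excursion and its amplification, then summing over excursions and over $k$, shows that $\sum_k \alpha_k^2 \Expect[A_{k,n}^2]$ diverges precisely when the load parameter $\rho$ exceeds~$1/2$.

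\textbf{Main obstacle.} The delicate point is isolating the exact threshold $\rho = 1/2$. A naive Gaussian CLT applied to $\log A_{k,n}^2$ predicts decay at rate $(k/n)^2$ with only bounded corrections, and therefore misses the divergence entirely. The divergence must be driven by rare but sizeable excursions of $\bfQ$, and the proof has to carefully balance the geometric-tail weight of these excursions against the step-size decay $\alpha_j \sim 1/j$. Carrying out this balance with the right constants is where the argument must adapt the techniques of \cite{dufmey14}.
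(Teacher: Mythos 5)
For part (i), your plan coincides with the paper's: apply \Cref{t:BigConvergence}. The paper just cites the theorem; your fleshed-out Lyapunov verification on the joint chain $(Q_n,W_n)$ is a reasonable way to check its hypotheses.

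For part (ii), you and the paper share the same underlying idea --- balance the exponential rarity of a macroscopic queue excursion against the multiplicative amplification it produces --- but the mechanics are quite different, and your sketch has genuine gaps. The paper does \emph{not} pass through Feynman--Kac operators or a busy-period decomposition, and it does not sum over $k$. It keeps a single term: with $n_0=\lceil\epsy n\rceil$,
\[
\Expect[\theta_n^2]\ \ge\ \alpha_{n_0-1}^2\,\Expect\Bigl[\prod_{k=n_0}^n\bigl(1+\alpha_k(Q_k-\eta-1)\bigr)^2\Bigr].
\]
It then lower-bounds the remaining expectation by restricting to the tent-shaped path event $\{q^n\in\clR_\epsy\}$ of \eqref{e:clR}: on that event the product is at least $\exp\bigl(2\sigma\delta n(\log2-\epsy)\bigr)$ by pointwise bounds on $\alpha_k(Q_k-\eta-1)$ (\Cref{t:LDP_Q_Bdd}, \Cref{t:exp-bound-delta}, \Cref{t:half-harm-series}), while the event itself has probability at least $\exp\bigl(-n\delta^2/(2\alpha)+O(\epsy n)\bigr)$ by the path-space LDP (\Cref{t:MM1rate}). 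Inequality \eqref{e:sigma-delta-bound} then makes the gain exceed the rate when $\alpha>1/3$, i.e.\ $\rho>1/2$.

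Three concrete problems in your sketch. (a) A Lyapunov comparison with contraction factor $\gamma_\beta<1$ is built to give \emph{upper} bounds on the Feynman--Kac expectation; the proposition requires a \emph{lower} bound. You observe that the linear-in-$q$ correction supplies amplification, but extracting a lower bound from that observation requires a device such as conditioning on a favorable path event --- which your plan does not supply, and which, once supplied, renders the Feynman--Kac scaffolding unnecessary. (b) The step you call the ``Main obstacle'' is the entire content of the argument, and it is left undone. Note also that your ``$\exp(\mathrm{const}\cdot M^2/n)$'' hides a factor of order $1/\delta$ in the constant: the optimal excursion reaches $M$ of order $\delta n$, and the amplification there is $\exp(O(\delta n))$, which is linear rather than quadratic in $M$; suppressing this constant obscures precisely the load dependence that determines the threshold. (c) ``Diverges precisely when $\rho$ exceeds $1/2$'' is not what the proposition asserts and not what the paper's argument shows; the paper proves only sufficiency, using the crude inequality $I(\delta)\le\delta^2/\alpha$ and a single tent profile rather than optimizing over path shapes, so the true threshold for divergence is left open.
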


The proof of~\Cref{t:unbounded-mom} is rooted in large deviations theory for    reflected random walks.   A standard object of study is the scaled process  $q^n_t = \frac{1}{n} Q_{\lfloor n t \rfloor}$.   As $n\to\infty$ this converges to zero for each $t$ with probability one.   The large deviations question of interest is the probability of a large excursion over a finite time-window as illustrated in~\Cref{f:tent-path}.  There is elegant theory for estimating this probability, e.g.,~\cite{ganoco02},  which leads to the proof of~\Cref{t:unbounded-mom}.

\begin{figure}[ht]

\centering
\includegraphics[width= 0.95\hsize]{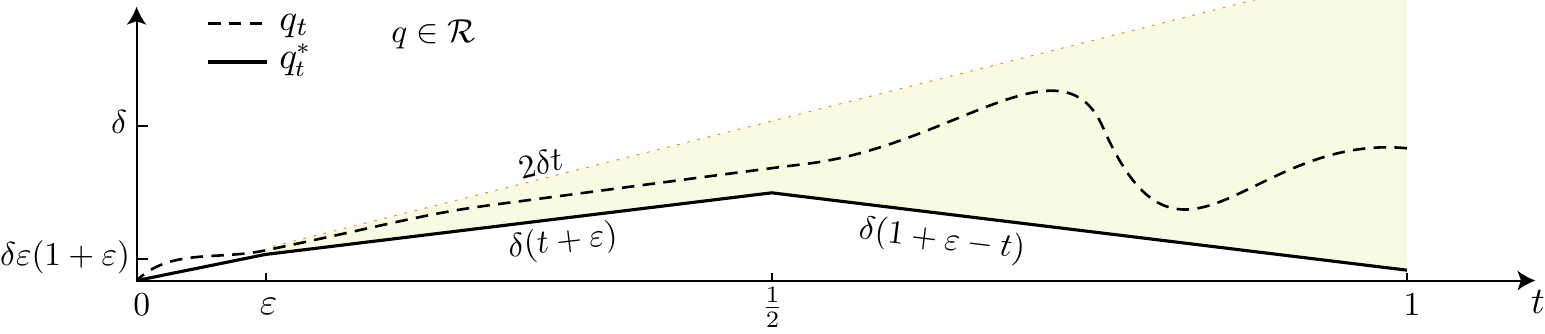}
 	\caption{Constraint region for scaled queue length process.}
 \label{f:tent-path}
\end{figure}

\Cref{t:unbounded-mom} does \textit{not} say that the CLT fails,
rather, it says the limiting covariance cannot be used to inform 
the convergence analysis.
Given the form~\eqref{e:sa-mm1} and the fact that $\{\theta_n\}$ converges to zero,  we might establish a CLT by comparison with the recursion,
\[
	\theta_{n+1}^\circ = \theta_n^\circ + \tfrac{1}{n+1}[\barf(\theta_n^\circ ) + \CEdist_{n+1}]
				= \theta_n^\circ +\tfrac{1}{n+1} [ - \theta_n^\circ + \CEdist_{n+1}],
\]
for which the CLT does hold with asymptotic variance equal to one.

All of assumptions for the CLT imposed in~\cite{pezheu97} are satisfied for this example, except for one:   Assumption~C.6  would imply the following:  For each $r\ge 1$ there exists  $b_{p,r} $ such that for each initial condition $\theta_0$ and $Q_0=q_0$,   \notes{This is new.  Is it understandable?}
\[
\Expect_{\theta_0,q_0} [\ind\{ \|  \theta_k \|  \le r  :  1\le k\le n\}  Q_n^p]   \le        b_{p,r} [1+ q_0^p].
\]
This is unlikely, since $\theta_k\to 0$ almost surely,   and  theory implies the weaker bound when the indicator function is removed:
$\displaystyle
\Expect_{q_0} [   Q_n^p]   \le        b_{p} [1+ q_0^{p+1}]$;
see~\cite[Prop.~14.4.1]{MT}.    \notes{  understandable?}

\Cref{f:CounterQ} shows histograms of the normalized error $ \sqrt{n} \theta_n$ for two values of load.   In each case the approximating density is $N(0,1)$.   What is missing from these plots is outliers that were removed before plotting the histograms.   For load  $\rho = \alpha/\mu= 3/7$ the outliers were few and not large.    For   $\rho= 6/7$,  nearly 1/3 of the samples were labeled as outliers,  and in this case the outliers were massive: Values exceeding $10^{20}$ were observed in  about 1/5 of runs.

   \begin{figure}[ht]
 \centering 
    \includegraphics[width=.7\hsize]{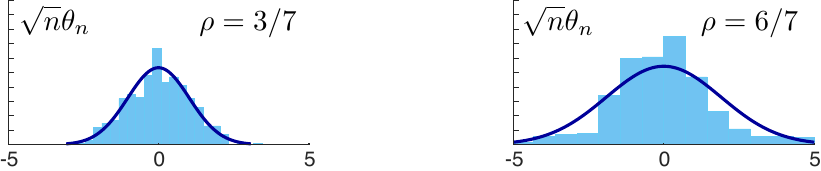}
  \caption{Histograms  of the normalized error for the scalar SA 
recursion~\eqref{e:sa-mm1},  with outliers discarded, and its Gaussian density approximation. 
The time horizon was $10^4$ for the smaller load, and $10^5$ for $\rho= 6/7$.}
\label{f:CounterQ} 
\end{figure}

 \section{Conclusions}

This paper provides simple criteria for convergence and asymptotic statistics 
for stochastic approximation and RL.   There are of course 
many important avenues for future research:
 
\wham{(i)}   We are currently investigating the value of resetting as defined
in~\cite{ger92}.   In particular, with resetting, can versions of~\Cref{t:CLT,t:PR}  be obtained under~(V4) and appropriate  assumptions on $f$?

\wham{(ii)} 
   Extensions to two time-scale SA are of interest in many domains.   
Such generalizations  will be made possible 
based on the present results,
combined with ideas from~\cite{karbha18,kalmounautadwai20a,faibor23}.

\wham{(iii)}  Is there any hope for finer asymptotic statistics,
such as an Edgeworth expansion or finite-$n$ bounds of
Berry-Ess\'{e}en type?

\wham{(iv)}  Many practitioners in RL opt for constant step-size algorithms.  When $\alpha_n\equiv \alpha_0$ is fixed but sufficiently small, it is possible to obtain the drift condition~\eqref{e:Lcontraction} under the assumptions of~\Cref{t:BigBounds} (with~(A1) abandoned).     In~\cite{laumey23a} it is shown that this implies that the fourth moment of $\tiltheta_n$ is uniformly bounded.     Open questions here include:
 

\whamc  
Can we   establish a topological formulation of geometric ergodicity for $\{ (\theta_n,\Phi_n) : n\ge 0\}$ to justify steady-state bias and variance formulae?  
Ideas in the prior work~\cite{huakonmey02a} might be combined with recent advances on topological ergodicity for Markov chains---see~\cite{devkonmey20,qinhob22} and their references.

\whamc
Once we have established some form of ergodicity, does it follow that $\{z_k\}$ converges in distribution as $n\to\infty$? 
   Is the limit approximately Gaussian for small $\alpha_0$?
Can we obtain convergence of moments as in~\Cref{t:CLT}?



  
\bibliographystyle{abbrv}  

\def\cprime{$'$}\def\cprime{$'$}

 
\newpage

\appendix
\section{Technical Proofs}
\label{s:app}

{The appendix begins with bounds on sums of functions of the Markov 
chains $\bfPhi^\theta$.    
\Cref{s:MC} is devoted to  bounds under~(V4) leading to a 
proof of~\Cref{t:DV3multBdd},   and finer bounds are applied in~\Cref{s:Z} to 
obtain bounds on solutions to Poisson's equation~\eqref{e:fish-f}.
The remainder of the appendix is organized in correspondence with the presentation of the main results:  
\Cref{s:ASbdd} establishes boundedness of parameter estimates under the 
assumptions of~\Cref{t:BigConvergence},  and hence convergence.  \Cref{s:Lp} contains parallel theory for establishing $L_p$ bounds on the parameter estimates
and, based on this,~\Cref{s:AS}
contains proofs of the main results of the paper 
concerning asymptotic statistics.  

\subsection{Markov chain bounds}
\label{s:MC}

This subsection is organized into two parts.  We first investigate implications of~(V4), and then establish a key lemma under a weaker drift condition.

\wham{Consequences of~(V4)}

Recall the constant $b_v$ was introduced in~\Cref{t:v-uni-w} in the ergodic theorem~\eqref{e:v-uni-w}, which is equivalently expressed  $\lll P^n -\One\otimes\pi\lll _v  \le b_v   \varrho^n$. 

In this Appendix we   consider the family of transition kernels  $\{P_\theta : \theta\in\Re^d \}$.    It follows from the main result of~\cite{meytwe94b} that  the following hold under~(V4) or~(DV3), for  constants $b_v$,  $   \varrho<1$ independent of $\theta$:
\begin{equation}
\lll \tilP_\theta^n   \lll_v   \le b_v    \varrho^n
\quad
\text{and}
\quad
\lll P_\theta^n\lll_v \le b_v    \,, \qquad n \geq 0,
\label{e:bv}
\end{equation}
with $v\eqdef e^V$ under~(DV3),   and  $\tilP_\theta^n   \eqdef P_\theta^n -\One\otimes\pi_\theta$.

We can generalize the second bound in~\eqref{e:bv} to the stochastic process $\bfPhi$:
\begin{lemma}
\label[lemma]{t:PthetaBdd}
Under~{\em (DV3)} we have
the bound $\Expect[ v( \Phi_n) \mid \clF_k]    \le  b_v  v(\Phi_k)$  a.s.,
for a possibly larger constant $b_v$,
for any initial conditions $\theta_0$,  $\Phi_0$,  and any $n>k\ge 0$.    
\end{lemma}

\begin{proof}  
Subject to~(DV3), denote  $S = \{ x   :   \exp (W(x)  -b s(x) ) \le  2\}$.  
Then, for any $\theta_0,\Phi_0$,
\[
\begin{aligned}
\Expect[ v( \Phi_{m+1}) \mid \clF_m]  & \le   \exp\big(  V( \Phi_{m})-W(\Phi_m) + bs (\Phi_m) \big) 
   \\
            & \le   \ind_{S^c} (\Phi_m)  \half   v( \Phi_{m})    
            +    \ind_{S} (\Phi_m)   \Big[  \sup_{x\in S}  \exp\big( V(x) -W(x) + bs (x) \big)  \Big]
               \\
            & \le     \half   v( \Phi_{m})    +   b_S  \,,   \qquad m\ge 0\,, 
\end{aligned} 
\]
where $b_S<\infty$ can be found due to~\eqref{e:V-bounded-level}.       The desired result then follows, with $b_v = 2 b_S$,  by induction and the smoothing property of conditional expectation.
\end{proof}

These uniform bounds lead to the proof of~\Cref{t:DV3multBdd}:

\begin{proof}[Proof of~\Cref{t:DV3multBdd}]   
The proof amounts to obtaining a bound  on the nonlinear program,  $
	\max\{ \Gamma(\delta) :   \delta\in \Re^n_+\,,  \   \sum \delta_k\le 1 \}$,
	with $\Gamma$ the function of $\delta$ on the left-hand side of  \cref{e:DV3multBdd+}.
To simplify discussion we take $n_0=0$, so that the objective becomes,
\[
	\Gamma(\delta) \eqdef	\Expect\Bigl[\exp\Bigl( V(\Phi_n) +\sum_{k=0}^{n-1}   \delta_k W(\Phi_k)  \Bigr)  \Bigr],
\]
with $\theta_0\in\Re^d$ and $\Phi_0=x\in\state$ arbitrary.

The function $\Gamma\colon\Re^n_+\to\Re_+ \cup \{\infty\}$ is strictly convex,  which means the maximum  is attained at an extreme point:  The optimizer $\delta^*$  is a vector with all entries but one equal to zero.  Consequently,  
$\displaystyle
	\Gamma(\delta^*) = \max_{0\le k < n}   \Expect\bigl[\exp\bigl( V(\Phi_n) +  W(\Phi_k)  \bigr)  \bigr]
$.

To complete the proof of~\eqref{e:DV3multBdd+} we obtain the bound,
\begin{equation}
		\Expect\bigl[\exp\bigl( V(\Phi_n) +  W(\Phi_k)  \bigr)  \bigr]
		\le  b_v^2  e^{b} v(x)    \,,\qquad 0\le k\le n-1\,,\ \  \Phi_0=x\in\state \, .
		\label{e:DV3multBdd+Extreme}
\end{equation}
On applying~\Cref{t:PthetaBdd} we have for each $k$,  
\[
\Expect\bigl[\exp\bigl( V(\Phi_n) +  W(\Phi_k)  \bigr) \mid  \clF_{k+1} \bigr] 
\le b_v v (\Phi_{k+1}) \exp\bigl(    W(\Phi_k)  \bigr) ,
\]
Hence by the smoothing property of conditional expectation,
\[
			\Expect\bigl[\exp\bigl( V(\Phi_n) +  W(\Phi_k)  \bigr)  \bigr]  
		  \le  b_v  \Expect\bigl[\exp\bigl( V(\Phi_{k+1}) +  W(\Phi_k)  \bigr)  \bigr]
		  \le  b_v  e^b  \Expect\bigl[\exp\bigl( V(\Phi_k)      \bigr)  \bigr] \,, 
\]
	where the second inequality follows from~(DV3) and the  uniform bound $s(x)\le 1$.  
This and a second application of~\eqref{e:bv}
	completes the proof of~\eqref{e:DV3multBdd+Extreme}.
\end{proof}

\wham{Condition (V3) and its consequences.}  

Condition~(V3) is a relaxation of~(V4):  For  functions $G, H:\state\to[1,\infty]$,   a small function   $s\colon\state\to ( 0,1]$,   and constant  $b <\infty$,
\begin{equation}
	\Expect[ H(\Phi^\theta_{k+1}) - H(\Phi^\theta_k) \mid \Phi^\theta_k =x] \leq -  G(x) + bs(x)\,, \qquad x\in\state .
\label{e:V3}
\end{equation}
Or in operator-theoretic notation, 
$P_\theta H \le H - G  + bs $.
 Consequences of this drift criterion is a focus of~\cite[Ch.~14]{MT}.
 Condition~(DV3) implies~(V3) with $V=H$ and $G=W$.

In later results, in particular~\Cref{t:fund-kernel-z},
we construct a solution to~\eqref{e:V3} in which the small function differs from the function $s$ appearing in~\eqref{e:smallR}.  The next result justifies replacing $s$ by an indicator function of a sublevel set of the function $H$.

 \begin{lemma}
\label[lemma]{t:uniSmall}
Suppose that~\eqref{e:V3} and~\eqref{e:smallR}
 hold.     
Then the set $C_m = \{ x : H(x) \le m \}  $ is either empty or small for each $m\ge 1$:      There exists $\epsy_m>0$ 
independent of $\theta$ 
such that   $ R_{\theta}(x,A)  \ge \epsy_m \nu(A) $   for each $A\in\bx$ and   $x\in C_m$.   
\end{lemma}

\begin{proof}
The fact that $C_m$ is small follows from~\cite[Th.~14.2.3]{MT}.   The proof of the lemma involves obtaining $\epsy_m$,  depending only on $\{H, b, s, \nu \}$.


It is assumed in~\eqref{e:smallR} that $s$ is everywhere positive.      Consequently, there is $\delta>0$ such that the set $S = \{ x : s(x)\ge \delta \} $  
 satisfies   $\nu(S)>0$.    We next obtain a bound on the \textit{first return time} to the set $S$,  denoted $\uptau_S = \min\{ k\ge 1 : \Phi^\theta_k \in S \}$.
We obtain from~\cite[Th.~14.2.2]{MT} and the assumption that $G\ge 1$,
 \[
 \Expect[\uptau_S]  \le V(x)  + b \Expect\Big[\sum_{k=0}^{\uptau_S -1}     s(\Phi^\theta_k)  \Bigr]
\le V(x)  + b \frac{1}{\nu(S)}  \Expect\Big[\sum_{k=0}^{\uptau_S -1}     R_{\theta}(\Phi^\theta_k,  S)  \Bigr]  \,, \quad \text{for all $x = \Phi^\theta_0$, 
}
 \]
 where the second inequality follows from~\eqref{e:smallR}.
 The following bound on the sum on the-right hand side  is justified in the proof of~\cite[Th.~14.2.3]{MT}:
\[
\Expect\Big[\sum_{k=0}^{\uptau_S -1}     R_{\theta}(\Phi^\theta_k,  S)  \Bigr]    =   \sum_{n=0}^\infty 2^{-n-1}  \Expect\Big[\sum_{k=0}^{\uptau_S -1}    
			  \ind\{ \Phi^\theta_{k+n}\in S \}    \Bigr]   
			\le \sum_{n=0}^\infty 2^{-n-1}  n    = 1.
\]
From the definition of $C_m $ we conclude that  $ \Expect[\uptau_S]  \le  B_m  \eqdef m +   b  / \nu(S) $  for all $x \in C_m$.  

Markov's inequality then gives $\Prob \{  \uptau_S  \le  n  \}  \ge 1  -  B_m / n$ for any $n$ and any $x\in C_m$.   
Hence with   $n_m\ge 1$ chosen so that  $B_m / n_m \le 1/2$, we have
 $\sum_{k=1}^{n_m}  P_\theta^k(x,S) =  \Prob \{  \uptau_S  \le  n_m  \}  \ge 1/2$ for $x\in C_m$.

We have $P_\theta^k R_{\theta}   \le  2^{-k-1} R_{\theta}$ for any $k$, giving for $x\in C_m$ and $A\in \bx$,   
\[
\Big[ \sum_{k=1}^{n_m}  2^{-k-1}  \Big]   R_{\theta}(x,A) \ge  \Big( \sum_{k=1}^{n_m} P_\theta^kR_{\theta}  \Bigr)    \,    (x,A)  \ge \half  \min_{y\in S} R_{\theta}(y,A)   \ge \half \delta \nu(A).
\] 
This gives the desired inequality with $\epsy_m = \half \delta\big[ \sum_{k=1}^{n_m}  2^{-k-1}  \big]^{-1}$. 
 \end{proof}

\subsection{Poisson's equation and consequences}
\label{s:Z}

This subsection is devoted to bounds on the solution to Poisson's equation $\haf$ appearing in~\eqref{e:fish-f},  which is the basis of the noise decomposition~\eqref{e:noise-decomp}.  

We first require a representation in terms of the \textit{fundamental kernel}~\cite{MT},    defined as the inverse $\clZ_\theta \eqdef  [I - \tilP_\theta   ]^{-1} $,
with $\tilP_\theta$ defined below~\eqref{e:bv}.  We will see that the inverse exists on an appropriate domain under the assumptions of our main results. 
Throughout the Appendix we consider exclusively the   solution to~\eqref{e:fish-f} defined by:
\begin{equation}
 \haf(\theta, x) \eqdef  \int \clZ_\theta(x,dy)  f(\theta,y)\,,\qquad  \theta\in\Re^d  \,,  \  x\in\state .
\label{e:hafZ}
\end{equation}
When adopting operator theoretic notation we denote $ \haf_\theta (x) = \haf(\theta, x)  
$   and $f_\theta(x) =f (\theta,x)$  for $x\in\state$,   giving $\haf_\theta = \clZ_\theta f_\theta$.

Subject to geometric ergodicity, the fundamental kernel may be expressed as the sum:
\begin{equation}
	\label{e:fundamental-z}
	\clZ_\theta  = \sum_{k=0}^\infty\tilP_\theta ^k = I + \sum_{k=1}^\infty [P_\theta ^k - \mathbbm{1}\otimes \pi_\theta ] .
\end{equation}
Under~(V3) we then have $\| \haf_\theta\|  \le c_\theta v(x)$ for all $x$, with $c_\theta<\infty$ for each $\theta$.
The following provides more useful bounds.

\begin{proposition}
\label[proposition]{t:V3bdds}
Suppose that~\eqref{e:V3} and~\eqref{e:smallR}
 hold.     
Then the fundamental kernels admit the uniform bound
   $\lll \clZ_\theta \lll_{G,H} \le   b_z \eqdef  b_u b_\nu  +  b_u [ 1 + b_u b_\nu]$, with
$  b_u \eqdef  1 + 2b     $  and   $  b_\nu \eqdef    b + \| s /H \|_\infty^{-1}  $.
\end{proposition}

\begin{proof}  
Under the assumptions of the proposition 
the Markov chain is positive Harris recurrent with $\pi_{\theta}(G)<\infty$  and  $\lll \clZ_\theta \lll_{G,H} < \infty$~\cite[Th.~2.3]{glymey96a}.  
The remaining work is to construct the universal constant $b_z$.  

The minorization condition is  expressed $R_{\theta}\ge s\otimes \nu$, which justifies the notation
$
U_{\theta} = \sum_{n=0}^\infty (R_{\theta} - s\otimes \nu)^n
$.
It is well known that $\mu_{\theta} = \nu U_{\theta}$ is an invariant measure for both $R_{\theta}$ and $P_\theta$,  that $\mu_{\theta}(s) =1$,  and that $U_{\theta} s = 1$ everywhere  (see~\cite{konmey03a,konmey05a} for history).   

Based on this theory we obtain a representation for the fundamental kernel as follows:   
First,  $ \clZ_{\theta,r} \eqdef U_{\theta}  [I-\One\otimes \pi_{\theta}]  $ is a version of the fundamental kernel for $R_{\theta}$,  so that $R_{\theta} \clZ_{\theta,r} = \clZ_{\theta,r}  -  [I-\One\otimes \pi_{\theta}]  $.    
The identity $ R_\theta[P_\theta-I] = R_\theta-I$
 then implies that $\clZ_{\theta,0} \eqdef R_{\theta} \clZ_{\theta,r}  $   solves $P \clZ_{\theta,0} = \clZ_{\theta,0}  -  [I-\One\otimes \pi_{\theta}]  $.   
We add a rank one operator to obtain:
\[
\clZ_{\theta} \eqdef \One\otimes \pi_{\theta} +	\clZ_{\theta,0} =	   \One\otimes \pi_{\theta} + U_{\theta} R_{\theta} [I-\One\otimes \pi_{\theta}].
\]
This solves $P \clZ_{\theta}= \clZ_{\theta} -  [I-\One\otimes \pi_{\theta}]  $, and also  $\pi_{\theta}(\clZ_{\theta}g) = \pi_{\theta}(g)$ for any $g \in L_\infty^G$,  which is consistent with~\eqref{e:fundamental-z}.

The value of $b_z$ will be obtained as an upper bound on the right-hand side of the   inequality,
\begin{equation}
 \lll \clZ_{\theta}\lll_{G,H}   \le   \lll  \One\otimes \pi_{\theta} \lll_{G,H}+    \lll U_{\theta}R_{\theta} \lll_{G,H}  [1+  \lll\One\otimes \pi_{\theta} \lll_G].
\label{e:fundGHpre-bdd}
\end{equation}
 The first step   is to express~(V3) in terms of the resolvent.   Writing~\eqref{e:V3}  as $[P-I]H \le  -G+ b s$ we apply $R_{\theta}$ to each side and use  $R_\theta[P_\theta-I] = R_\theta-I$
 to obtain a version of~(V3) for the resolvent kernel,    $ [R_{\theta}-I]H = R_{\theta}[P-I]H \le  -R_{\theta}G+ b R_{\theta}s$, and hence $R_{\theta}G  \le  bR_{\theta}s + [I - (R_{\theta}-s\otimes \nu)] H $.

Following standard arguments (e.g.,~\cite[Lem.~3.2]{konmey03a}) we conclude that:
\begin{equation}
 U_{\theta} R_{\theta} G \le H + b U_{\theta} R_{\theta} s    =  H + b     U_{\theta} R_{\theta} s.
 \label{e:URGbdd}
\end{equation}
 From the definition of $U_{\theta}$ we have $U_{\theta} R_{\theta} =    U_{\theta} (R_{\theta}-  s\otimes \nu)  +  [ U_{\theta}  s] \otimes \nu    = U_{\theta}-I +  [ U_{\theta}  s] \otimes \nu $.
Recalling $\mu_{\theta}(s) =1$ and   $U_{\theta} s \equiv 1$ gives 
 $ U_{\theta} R_{\theta} s  \le U_{\theta} s + \nu(s)  U_{\theta} s   \le 2  $,  and   from~\eqref{e:URGbdd},
 \[
  \lll U_{\theta}R_{\theta} \lll_{G,H}   \le   \sup_x \frac{1}{H(x)}  [  H(x)  + b U_{\theta} R_{\theta} \, s(x)]   \le  b_u \eqdef  1 + 2b .
 \]
 
Next, consider $ \lll  \One\otimes \pi_{\theta} \lll_{G,H}  =  \sup_x  \pi_{\theta}(G)/H(x) \le \pi_{\theta}(G)$.     We have,
\[
\pi_{\theta}(G)   =  \frac{ \mu_{\theta}  (G) }  { \mu_{\theta} (\state)}  \le  \mu_{\theta}  (G)  =  \mu_{\theta}  (R_{\theta}G)  = 
\nu (  U_{\theta} R_{\theta} G )\le 
\nu(H)   \lll U_{\theta}R_{\theta} \lll_{G,H}  \le 
\nu(H)  b_u,
\]
where the first inequality follows from $\mu_{\theta} = \nu U_{\theta}\ge \nu$,  so that $ \mu_{\theta} (\state) \ge 1$, 
and   the  identity that follows is a consequence of invariance of $\mu_{\theta}$. 

  To bound $\nu(H) $ we combine the pair of bounds  $R_{\theta} H \, (x) \ge s(x)  \nu(H)$  and $R_{\theta}H \le H + b     s(x)  $,    
  to obtain  $\nu(H) \le  b      +  H(x)/s(x)   $.   The ratio admits the bound  
   $H(x)/s(x)  \le  \| s /H \|_\infty^{-1}     $  for all $x$,   from which we obtain   $\nu(H) \le  b_\nu \eqdef    b + \| s /H \|_\infty^{-1}     $,  and hence   $ \lll  \One\otimes \pi_{\theta} \lll_{G,H}  \le b_u b_\nu$.      This and~\eqref{e:fundGHpre-bdd}
 completes the proof.   
\end{proof}

 Lipschitz bounds on $\haf_\theta$  are obtained by invoking  the \textit{resolvent equation},
 \begin{equation}
\clZ_\theta -  \clZ_{\theta'}
=  \clZ_\theta  [\tilP_\theta - \tilP_{\theta'}  ] \clZ_{\theta'}  \,,\qquad \theta\,,\ \theta'\in\Re^d,
\label{e:Fund-z-perturb}
\end{equation}
and its corollary $
\pi_\theta - \pi_{\theta'}  =   \pi_\theta [P_\theta - P_{\theta'}  ]  	\clZ_{\theta'}$. 
This is one ingredient in perturbation theory for Markov chains~\cite{sch68}, which is at the heart of the actor-critic   method of RL---see commentary in~\cite{CSRL}. \Cref{t:pi-pert} provides conditions under 
which~\eqref{e:Fund-z-perturb} is justified and an essential bound. 

\begin{lemma}
\label[lemma]{t:pi-pert}
Suppose that for functions measurable functions $H_i   \colon \state\to [1,\infty)$,  $i=1,2,3$, 
 we have
$\lll   P_\theta \lll_{H_i}  +  \lll \clZ_\theta \lll_{H_i, H_{i+1}}   <\infty$,
and    $\pi_\theta(H_{i+1})<\infty$
for each  $i=1,2$ and each $\theta$.

Then,  
$\displaystyle
\lll \tilP_\theta - \tilP_{\theta'}  \lll_{H_1}   \le   \lll P_\theta - P_{\theta'}  \lll_{H_1}   +  \pi_\theta(H_2) \lll P_\theta - P_{\theta'}  \lll_{H_2}   \lll	\clZ_{\theta'}\lll_{H_1,H_2}
$
\
 for each $\theta,\theta'\in\Re^d$.
\end{lemma}

When~\eqref{e:bv} holds we have $\lll \clZ_\theta \lll_v \le   b_v /( 1-\varrho  )$.   Consequently,  if~(A2) also holds then   $\haf_\theta \in\Lv$,   and 
from~\eqref{e:Fund-z-perturb} we obtain a useful formula for differences:   For $\theta,\theta'\in\Re^d$,  
\begin{equation}
\haf_\theta -  \haf_{\theta'}   
	= 
  \clZ_\theta [f_\theta - f_{\theta'} ]  +   \clZ_\theta  [\tilP_\theta - \tilP_{\theta'}  ] \     \haf_{\theta'}.
\label{e:haf-diffs}
\end{equation}

Far better bounds are obtained in~\Cref{t:bounds-H} whose proof is based on the following:
\begin{proposition}
\label[proposition]{t:fund-kernel-z}	
	{\bf (i)} Under~{\em (V4)},  $\sup_\theta\lll \clZ_\theta \lll_{v^\epsy} < \infty $ for each  $\epsy\in (0,1]$.

\smallskip

\noindent
{\bf (ii)} Under~{\em (DV3)},    $\sup_\theta\lll \clZ_\theta \lll_{G_p, V_p  } < \infty $  with $G_p = 1 + W V^{p-1}$,   $V_p = 1 + V^p$,   for each      $p\ge 1$.
\end{proposition}

\begin{proof}
Part~(i) follows from Jensen's inequality: If~(V4) holds,  
then the same drift condition holds for $v^\epsy$ using $P_\theta v^\epsy \le (P_\theta v)^\epsy$; see~\cite[Th.~2.3]{glymey96a} or~\cite[Lem.~15.2.9]{MT}.	

The proof of part~(ii) also begins with Jensen's inequality.   The function $\clG(x) = \log(x)^p$ is concave on the interval $[a_p,\infty)$ with $a_p = \exp(p-1)$.    Consequently, under~(DV3),
\[
\begin{aligned}
\Expect[ | V(\Phi^\theta _{k+1}) &+ W(\Phi^\theta _k) -  b s(\Phi^\theta _k)  +k_p |^p \mid \clF_k ]
\qquad\qquad \text{(where $k_p = a_p + b $)}
   \\
            & = \Expect[\clG \big( \exp \big(V(\Phi^\theta _{k+1}) + W(\Phi^\theta _k) -  b s(\Phi^\theta _k)  +k_p \big) \big)  \mid \clF_k ]
            \\
            &\le \clG \big( \exp \big(V(\Phi^\theta _{k})  +k_p \big) \big)  =    \big|  V(\Phi^\theta _{k})  + k_p \big|^p.
\end{aligned} 
\]
  From this we   obtain a version of the drift condition~\eqref{e:V3}: 
  \[
\Expect[ V_p( \Phi^\theta _{k+1})   \mid \clF_k ]  \le V_p( \Phi^\theta _k)  - \delta_p G_p( \Phi^\theta _k) 
+ b_p \ind_S( \Phi^\theta _k),
\]
where $\delta_p>0$,  $b_p<\infty$, and  $S = \{x :  V_p(x)\le m \}$ for some $m$.
The proof is completed on combining~\Cref{t:V3bdds} and~\Cref{t:uniSmall}.
\end{proof}

\begin{proposition}
\label[proposition]{t:bounds-H}	Suppose that~{\em (A2)}  holds.
We then obtain a Lipschitz bound on  $\haf$ defined in~\eqref{e:hafZ} under one of the following conditions:
\begin{subequations}
\whamem{(i)}
If~{\em (V4)} holds and $L^4\in \Lv$, then  there exists a constant $b_{\haf} < \infty$ such that:
\label{e:bounds-L}
	\begin{align}
			\| \haf(\theta, x)\| & \leq b_{\haf} v(x)^{\fourth} \bigl[1+\|\theta\| \bigr] \,, 
		 \label{e:growth-L}
	 \\
			\| \haf(\theta, x) - \haf(\theta', x)\| & \leq b_{\haf} v(x)^{\fourth} \|\theta - \theta'\|\,, \qquad \theta,\theta'\in\Re^d\,,   x\in\state \, .
 \label{e:lip-L}
\end{align}
\end{subequations}

\begin{subequations}
 \label{e:bdds+lip-H}
\whamem{(ii)}
If~{\em (DV3)} holds and $L\in L_\infty^W$, then  there exists a constant $b_{\haf} < \infty$ such that:
		\label{e:bounds-H}
	\begin{align}
			\| \haf(\theta, x)\| & \leq b_{\haf}\bigl(1+V(x) \bigr) \bigl[ 1+\|\theta\| \bigr]\,, 
 \label{e:growth-H} 
			\\
			  \| \haf(\theta, x) - \haf(\theta', x)\|  &    \leq b_{\haf} \bigl(1+  V(x)  \bigr) \|\theta - \theta'\|\,,   \qquad \theta,\theta'\in\Re^d\,,   x\in\state.%
\label{e:lip-H}%
\end{align}%
\end{subequations}%
Under either~(i) or~(ii),  the function $\haf$ solves Poisson's equation in the form~\eqref{e:fish-f}.
\end{proposition}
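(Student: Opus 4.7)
The plan is to exploit the representation $\haf(\theta,x) = \int \clZ(x,dy)\, f(\theta,y)$ from \eqref{e:hafZ}, and to pull each bound through the fundamental kernel $\clZ$ using the operator estimates already provided by \Cref{t:fund-kernel-z}. Since $\clZ$ acts componentwise on vector-valued functions, the argument reduces to estimates on each scalar coordinate $f_i(\theta,\cdot)$, with the dimension $d$ absorbed into $b_{\haf}$ via a final application of the Euclidean norm.

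First I would record the scalar consequences of \eqref{e:lip-f}: for each coordinate $1\le i\le d$, both $|f_i(\theta,x)|\le L(x)(1+\|\theta\|)$ and $|f_i(\theta,x)-f_i(\theta',x)|\le L(x)\|\theta-\theta'\|$, simply because $|f_i|\le \|f\|$. These give, in the $v^{1/4}$-norm (for part (i)) or the $W$-norm (for part (ii)), the bounds
\[
\|f_i(\theta,\cdot)\|_{v^{1/4}}\le \|L\|_{v^{1/4}}(1+\|\theta\|),\qquad \|f_i(\theta,\cdot)-f_i(\theta',\cdot)\|_{v^{1/4}}\le \|L\|_{v^{1/4}}\|\theta-\theta'\|,
\]
and similarly for the $W$-norm with $\|L\|_W=\delta_L$. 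Note that $L^4\in\Lv$ is equivalent to $\|L\|_{v^{1/4}}<\infty$ by pointwise comparison, so the right-hand sides are finite in each setting.

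For part (i), \Cref{t:fund-kernel-z}(i) with $\epsy=1/4$ yields that $\clZ$ is a bounded linear operator on $L_\infty^{v^{1/4}}$. Applying it to each coordinate of $f(\theta,\cdot)$ and its differences produces the scalar analogues of \eqref{e:growth-L} and \eqref{e:lip-L}, and combining the $d$ coordinates via the Euclidean norm gives the stated vector bounds, with a constant $b_{\haf}$ absorbing $\sqrt{d}$, $\|L\|_{v^{1/4}}$, and $\lll\clZ\lll_{v^{1/4}}$. Part (ii) is parallel: \Cref{t:fund-kernel-z}(ii) states $\clZ\colon L_\infty^W\to L_\infty^{V+1}$ is bounded, and the same componentwise argument yields \eqref{e:growth-H} and \eqref{e:lip-H}. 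For the Poisson identity, the defining relation $[I-P+\mathbbm{1}\otimes\pi]\clZ=I$ applied to $f(\theta,\cdot)$ gives
\[
\haf(\theta,\cdot)-P\haf(\theta,\cdot)+\mathbbm{1}\,\pi(\haf(\theta,\cdot))=f(\theta,\cdot).
\]
Applying $\pi$ to both sides shows $\pi(\haf(\theta,\cdot))=\pi(f(\theta,\cdot))=\barf(\theta)$, so $(I-P)\haf=f-\barf(\theta)=\tilf$, which is \eqref{e:fish-f}.

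The only genuine subtlety is that $\clZ$ is a signed kernel, so pointwise bounds on $|f_i|$ cannot be propagated by integrating absolute values against $\clZ(x,dy)$. This is precisely why everything is framed in weighted sup-norms, and why the operator estimates of \Cref{t:fund-kernel-z} (which in turn rest on the Jensen-based drift argument used there) are the central input; given those, the remainder is mechanical bookkeeping of constants and a Euclidean combination over coordinates.
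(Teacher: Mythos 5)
Your proposal is correct and follows essentially the same route as the paper: represent $\haf=\clZ f$, pull the pointwise Lipschitz bounds of \eqref{e:lip-f} through the weighted operator norm of $\clZ$ via \Cref{t:fund-kernel-z}, handle the vector-valued function componentwise (the paper invokes ``equivalence of norms on $\Re^d$'' where you track a $\sqrt{d}$ factor — same idea), and verify Poisson's equation directly from the defining relation $[I-P+\mathbbm{1}\otimes\pi]\clZ=I$ together with the observation $\pi(\haf)=\pi(f)=\barf(\theta)$. Your closing remark about $\clZ$ being a signed kernel, and hence why the argument must live in weighted sup-norms rather than proceed by monotonicity, is exactly the right thing to flag even though the paper leaves it implicit.
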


\begin{proof}
We adopt the alternate notation  $ \haf_\theta (x) = \haf(\theta, x)  $   and $f_\theta(x) =f (\theta,x)$.    
 
We begin with~(i).    It follows from~\eqref{e:lip-f} and the assumption  $L^4\in \Lv$ that,
\[
\text{
$ 
	\| f_\theta(x)\| \leq L(x)[1+\|\theta\|]\leq  b_f(\theta)  v(x)^{\fourth}
$,    \quad  where $b_f(\theta) =
\|L\|_{v^\fourth}   [1+\|\theta\|  ]$.  }
\]
 That is,   $  \| f_\theta \|_{v^{1/4}}  \le   b_f(\theta)$,   and  we then obtain~\eqref{e:growth-L}: 
\[
		\frac{1}{v(x)^{\fourth}}\| \haf_\theta( x)\|   \le    \| \clZ_\theta f_\theta\|_{v^{1/4}} 
		 \leq     b_f(\theta)  \lll \clZ_\theta \lll_{v^\fourth}    \,, \quad \text{  for each $    \theta\in\Re^d$, $ x\in\state$,}
\]

The proof of~\eqref{e:lip-L} uses~\eqref{e:haf-diffs}:  
\[
\begin{aligned} 
\|  \haf_\theta  -  \haf_{\theta'}   \|_{v^{1/4}} 
 		 &  \le
	   \lll \clZ_\theta \lll_{v^\fourth}   \|  f_\theta - f_{\theta'}  \|_{v^\fourth}  
	+
	   \lll \clZ_\theta \lll_{v^\fourth}    \lll \tilP_\theta - \tilP_{\theta'} \lll_{v^\fourth}   \|    \haf_{\theta'}  \|_{v^\fourth}
	   \\
	    		 &  \le
	   \lll \clZ_\theta \lll_{v^\fourth}    \| L\|_{v^\fourth} \|\theta-\theta'\|  
	+
	   \lll \clZ_\theta \lll_{v^\fourth}    \lll \tilP_\theta - \tilP_{\theta'} \lll_{v^\fourth}   \|    \haf_{\theta'}  \|_{v^\fourth},
\end{aligned} 
\]
where the second inequality follows 
from~\eqref{e:lip-f} in~(A2).  Applying~\eqref{e:growth-L} and~\Cref{t:pi-pert} with $H_i= v^\fourth$  for each $i$ 
we bound the second term,
\[
 \lll \tilP_\theta - \tilP_{\theta'} \lll_{v^\fourth}   \|    \haf_{\theta'}  \|_{v^\fourth}
 		\le  \Bigl( \frac{B_d}{1+   \| \theta \| + \| \theta' \|  }  
		  \| \theta - \theta' \|    \Bigr)     b_{\haf}  \bigl[1+\|\theta\| \bigr] 
		\le  b_{\haf}  B_d  \| \theta - \theta' \| ,
\]
where $
\displaystyle B_d =  b_d \sup_{\theta,\theta'} [1+     \pi_\theta(H)    \lll	\clZ_{\theta'}\lll_{G} ]$.

 Part~(ii) requires repeated applications of~\Cref{t:fund-kernel-z}~(ii), which tells us that   $\clZ_\theta$ is a bounded linear operator from  $L_\infty^{G_p} $ to $L_{\infty}^{V_p}$ for each $p$,   with   $G_p = 1 + W V^{p-1}$ and $V_p = 1 + V^p$.    	Recalling~\eqref{e:Vnorm}, this is expressed $\lll \clZ_\theta\lll _{G_p, V_p}<\infty $ for each $p\ge 1$.   Applying this result with $p=1$   gives   $   \| \haf_\theta \|  _{V_1} \le   \lll \clZ_{\theta'} \lll_{G_1,V_1}  \|    f_{\theta}  \|_{G_1}$.  This bound   combined with~\eqref{e:lip-f}
implies~\eqref{e:growth-H}. 
 
 The proof of~\eqref{e:lip-H} uses~\eqref{e:haf-diffs} as in the 
proof of~(i):  We have, with $p=2$,
\[
\begin{aligned} 
\|  \haf_\theta & -  \haf_{\theta'}   \|_{V_2} 
 		\le
	   \lll \clZ_\theta \lll_{G_2,V_2}   \|  f_\theta - f_{\theta'}  \|_{G_2}  
	+
	   \lll \clZ_\theta \lll_{G_2,V_2}      \lll \tilP_\theta - \tilP_{\theta'} \lll_{V_1,G_2}    
	     \lll \clZ_{\theta'} \lll_{G_1,V_1} 
	    \|    f_{\theta'}  \|_{G_1}\,.
\end{aligned} 
\]
Each term is bounded via~\Cref{t:fund-kernel-z}~(ii) as in the proof 
of~\eqref{e:lip-L}, except for a slightly different application 
of~\Cref{t:pi-pert}:
\[
   \lll \tilP_\theta - \tilP_{\theta'} \lll_{V_1,G_2}  \le    \lll \tilP_\theta - \tilP_{\theta'} \lll_{V_1}   
   	 \le  \frac{B_d}{1+   \| \theta \| + \| \theta' \|  }  
		  \| \theta - \theta' \| ,
\] 
where  $B_d =      b_d \sup_{\theta,\theta'} [1+  \pi_\theta(V_1)    \lll	\clZ_{\theta'}\lll_{V_1,V_2} ]  \le   b_d \sup_{\theta,\theta'} [1+  \pi_\theta(V_1)    \lll	\clZ_{\theta'}\lll_{G_2,V_2} ]  $. 
\end{proof}

\begin{subequations}

The following  is a crucial corollary to \Cref{t:bounds-H}.

\begin{proposition}
\label[proposition]{t:lip-noise}
Under~{\em (A2)} we obtain bounds on the terms in 	the decomposition of $\Delta_{n+1}$ in~\Cref{t:noise-decomp}.    Under~{\em (V4)},
\begin{equation}
\begin{aligned}
			\| \MD_{n+1} \| & \leq b_{\haf} \bigl[ (\Expect[v^\fourth(\Phi_{n+1}) \mid \clF_{n}] + v^\fourth(\Phi_{n+1}) \,  \bigr] \bigl[1  + \|\theta_n\| \bigr]  
\\
			\|\clT_{n+1}\|    & \leq  b_{\haf} v^\fourth(\Phi_{n+1}) \bigl[1 + \|\theta_{n}\| \bigr] 
			 \\
			\| \Oops_{n+1}\| & \leq   b_{\haf} \|L\|_{v^\fourth}   v^\half(\Phi_{n+1}) \bigl[1 + \|\theta_n\| \bigr] \, .    
\end{aligned}
\label{e:lip-noise-L}
\end{equation}
Under~{\em (DV3)},
\begin{equation}
\begin{aligned}
	\| \MD_{n+1} \| & \leq b_{\haf} \bigl[ 2 + \Expect[V(\Phi_{n+1}) \mid \clF_{n+1}] + V(\Phi_{n+1}) \bigr] \bigl[1  + \|\theta_n\| \bigr] 
						 \\
			\|\clT_{n+1}\|    & \leq b_{\haf}[1+V(\Phi_{n+1})] \bigl[1 + \|\theta_{n}\| \bigr]
			\\
			\| \Oops_{n+1}\| & \leq   b_{\haf} [1+V(\Phi_{n+1})] L(\Phi_{n+1})  \bigl[1 + \|\theta_n\| \bigr].
\end{aligned} 
\label{e:lip-noise}%
\end{equation}
\end{proposition}
\end{subequations}%

\subsection{Almost-Sure Bounds}
\label{s:ASbdd}

As discussed immediately after~\Cref{t:BigConvergence},  boundedness of 
the parameter sequence $\{\theta_k\}$   is sufficient to obtain a.s.\ 
convergence under~(A1) and~(A2). \Cref{t:step-size} is one ingredient in 
establishing convergence;  the  bounds are easily established under~(A1).   
\begin{lemma}
\label[lemma]{t:step-size}
	Under {\em (A1)}, the following hold:  
	{\rm (i)}   $\displaystyle 
    \lim_{n \to \infty} \frac{ \alpha_n}{ \alpha_{n+1} } =1$,  
	\[ 
  \text{\rm (ii)} \
  \sum_{n=1}^{\infty} |\alpha_{n+1} - \alpha_{n}| <\infty
    \qquad
  \text{\rm (iii)} \   
  \lim_{n \to \infty} \sum_{k=m_n}^{m_{n+1}-1} |\sqrt{\alpha_{k+1}} - \sqrt{\alpha_{k+2}}|= 0.
  \]
 \end{lemma}

Recall from~\Cref{s:ode-method-infinity} that we introduce a ``hat'' on the parameter or its candidate ODE approximation to denote the scaled process.
In particular,   $ \hattheta_k  =  \theta_k /{c_n}$  for    $m_n \leq k < m_{n+1}$,  with  $c_n \eqdef \max\{1, \| \theta_{m_n}\|\}$.  
In~\Cref{t:bddpar}
 we show that the ODE approximation~\eqref{e:ode-method-infty} for $\{\hattheta_k\}$ implies boundedness of $\{\theta_k\}$.   The remainder of the section is devoted to verifying the~\eqref{e:ode-method-infty} under the assumptions of~\Cref{t:BigConvergence}.

 \begin{proposition}
\label[proposition]{t:bddpar}
If~\eqref{e:ode-method-infty} holds for each initial condition, then there is a deterministic constant $ \bdd{t:bddpar}<\infty$ such that $\displaystyle
\limsup_{k\to\infty }  \|\theta_k\| \le  \bdd{t:bddpar}$,
for each   $(\theta_0,\Phi_0)$.
\end{proposition}

\begin{proof}
  Assuming that  $T_{n+1} - T_n \ge \Trelax$ for each $n$, 
  and recalling the definition~\eqref{e:rescaled-ode},
  \[
 \frac{1}{\max\{1, \| \theta_{m_n}\|\} }   \| \theta_{m_{n+1}}\|  =   \|\hatODEstate_{T_{n+1}} \|  \le   
  \| \hatodestate_{T_{n+1} - }  \|  +  \|\hatODEstate_{T_{n+1}}  -  \hatodestate_{T_{n+1} -}  \|    \le  \half + o(1) \,, 
 \]
 in which $\displaystyle \hatodestate_{T_{n+1} -} \eqdef  \lim_{t\uparrow T_{n+1} } \hatodestate_{t}$, 
 from which it follows that $\limsup_{n\to\infty}   \| \theta_{m_n}\| \le 2$  a.s.     
 Applying the same arguments to $  \| \theta_k\|  /   \| \theta_{m_n}\|$, 
 \[
  \frac{1}{\max\{1, \| \theta_{m_n}\|\} }  \max_{m_n\le k < m_{n+1}}   \| \theta_k\|     
   \le   
\sup_{T_n\le t<T_{n+1}}   \| \hatodestate_{t }  \|  +  \sup_{T_n\le t<T_{n+1}}  \|\hatODEstate_t  -  \hatodestate_t \|    .
 \]
 This combined with~\eqref{e:ode-method-infty} establishes the desired bound with  $ \bdd{t:bddpar} = 2 \sup_{0\le t\le T+1}  \sup_{\hatodestate}  \| \hatodestate_{t }  \|  $,   where the inner supremum is over all solutions to the ODE@$\infty$ satisfying $\| \hatodestate_0\| \le 1$.
\end{proof}

Given~\Cref{t:bddpar}, the remaining   work required in establishing boundedness of the parameter estimates rests on establishing   solidarity with the ODE@$\infty$:

 \begin{proposition}
\label[proposition]{t:hat-ode-approx-as}
If~{\em (V4)} and~{\em (A1)--(A2)}  hold, then~\eqref{e:ode-method-infty} follows:
\[
 \lim_{n \to \infty}  \sup_{T_n\le t < T_{n+1} }  \|\hatODEstate_t -  \hatodestate_t \| = 0 \,, 
 \quad \text{and}\qquad
	\sup_{k\geq 0} \|\hattheta_k\| < \infty\quad a.s.
\]
 \end{proposition}

 The proof of the proposition takes up the remainder of this subsection.   
 While many arguments follow closely~\cite{bormey00a} and~\cite[Th.~4.1]{bor20a},  the Markovian setting introduces additional complexity.  
Key inequalities established here are also required for moment bounds.

We begin with an   application of the fundamental kernel and~\Cref{t:fund-kernel-z}.   
\begin{proposition}
\label[proposition]{t:LLNscaled}
Suppose that the assumptions of~\Cref{t:BigConvergence}
hold, so that in particular~{\em (V4)} holds along with the bound~\eqref{e:LipPtheta} using $v$ and $L^8\in \Lv$. 
Consider any function $g\colon\state\to\Re$ satisfying   $|g|^{8/3} \in   \Lv$,  and denote   $\tilg_\theta = g - \pi_\theta(g)$.     
Then,  
the partial sums converge: For each initial condition $\Phi_0,\theta_0$, there is a square-integrable random variable $S^g_\infty$ such that,
\[
\lim_{n\to\infty}  \sum_{i= 1}^n \alpha_i  \tilg_{\theta_i}(\Phi_i)   =  S^g_\infty  \qquad 
\mbox{a.s.}
\]       
\end{proposition}

\begin{proof}
The proof is similar to~\cite[Part~II, Sec.~1.4.6, Prop.~7]{benmetpri12},  though more complex because the noise is not exogenous. 
 In the exogenous case we only require $g^2\in\Lv$, and $\tilg_\theta$ is independent of $\theta$.

Denote $ \hag_\theta  \eqdef    \clZ_\theta g$.   
 \Cref{t:fund-kernel-z} tells us that $ |\hag_\theta(x)|   \le b_g v^{3/8}(x)  $ for a fixed constant $b_g$ and all $\theta, x$.   
 We have
$
\Expect[ \hag_{\theta_{i}}( \Phi_{i+1}) \mid \clF_i ]  =  \hag_{\theta_{i}}( \Phi_{i}) -  \tilg_{\theta_i}(\Phi_i)   
$
 for  each $i$, from which we  obtain $\tilg_{\theta_i}(\Phi_i)   =    \MD_i^g - \{  \clT^g_i-  \clT^g_{i-1}   \}  +     \alpha_i \clR^g_i    $,   where,
\[
  \MD_i^g  = \hag_{\theta_{i}}( \Phi_{i+1} ) - 
  \Expect[   \hag_{\theta_{i}}( \Phi_{i+1} )  \mid \clF_{i} ]    \,,
\quad
  \clT^g_i=     \hag_{\theta_{i}}( \Phi_{i+1})  \,, 
  \quad
   \clR^g_i  =  \tfrac{1}{\alpha_i}  [ \hag_{\theta_{i}}( \Phi_i) - \hag_{\theta_{i-1}}( \Phi_i) ]  \, .
\] 
Hence the partial sums of interest can be expressed,
\begin{equation}
 S^g_n \eqdef
 \sum_{i= 1}^n \alpha_i  \tilg_{\theta_i}(\Phi_i)   =  M^g_n -      \sum_{i= 1}^n   \alpha_i \{ \clT^g_i -  \clT^g_{i-1} \}  +
   \sum_{i= 1}^n   \alpha_i^2     \clR^g_i 
 \,,
\label{e:Sgn}
\end{equation}
where $ M^g_n = 
      \sum_{i= 1}^n   \alpha_i  \MD_i^g  $ is a 
  martingale that is square integrable:
\[
\Expect[ ( M^g_n )^2] =        \sum_{i= 1}^n   \alpha_i^2 \Expect[( \MD_i^g  )^2 ] 
	 \le b_g^2  \bigl(   \sup_{n \ge 0} \Expect[  v(\Phi_n)    ]\bigr)  \sum_{i= 1}^\infty  \alpha_i^2 .
\]
Therefore, it is convergent to a square-integrable random variable, denoted $ M^g_\infty $.  

Convergence of the second term in~\eqref{e:Sgn}
is established using summation by parts:
\[
  \sum_{i= 1}^n   \alpha_i \{ \clT^g_i -  \clT^g_{i-1} \}    
=
 \alpha_n \clT^g_n  -
 \alpha_1 \clT^g_0  -
  \sum_{i= 1}^{n-1}  \clT^g_i    \{ \alpha_{i+1} -\alpha_i \} .   
\]
From this we obtain a candidate expression for the limit:
\begin{equation}
S_\infty^g = M_\infty^g  + R_\infty^\tau+ R_\infty^g  + \alpha_1 \clT^g_0  \,,  \ \    \text{with}  \ \ 
R_\infty^\tau = \sum_{i= 1}^\infty  \clT^g_i    \{ \alpha_{i+1} -\alpha_i \}   \,, \ \ 
R_\infty^g =
   \sum_{i= 1}^\infty   \alpha_i^2     \clR^g_i.
\label{e:Sg_def}
\end{equation} 
Justification requires $\lim_{n\to\infty}  \alpha_n \clT^g_n  = 0$ a.s.,   and the existence of the two 
infinite sums in~\eqref{e:Sg_def}
as square integrable random variables. 

The first limit is obtained as follows:
\[
\Expect\Bigl[  \sum_{n=1}^\infty \{  \alpha_n \clT^g_n \}^2 \Bigr]  
\le \Bigl(\sup_{n\ge0}    \Expect [   \{   \clT^g_n \}^2  ] \Bigr) \sum_{n=1}^\infty  \alpha_n^2 <\infty.
\]
This implies that $ \alpha_n \clT^g_n\to0$ as $n\to\infty$ a.s.\ and in $L_2$.
Similarly, applying the triangle inequality in $L_2$,
\[
\| R_\infty^\tau \|_{L_2}  \le 
 \sum_{i= 1}^\infty  \| \clT^g_i  \| _{L_2}    |\alpha_{i+1} -\alpha_i | 
 \le
 \sqrt{  \sup_{n\ge0}    \Expect [   \{   \clT^g_n \}^2  ] }  \,    \sum_{i= 1}^\infty |\alpha_{i+1} -\alpha_i |   <\infty.
\]

To show that $ R_\infty^g  $ exists as a random variable in $L_2$ it is sufficient to obtain a uniform $L_2$ bound on $\{\clR^g_i \}$.
In view of~\eqref{e:Fund-z-perturb}  
we have for any $x$ and $i$,  and with    $H=v^{3/8}$,
\[
\frac{1}{H (x) } | \hag_{\theta_{i}}( x) - \hag_{\theta_{i-1}}( x) |
\le 
b_g
\lll \clZ_{\theta} \lll_{H}   \clZ_{\theta'}  \lll_{H}   \lll  \tilP_\theta - \tilP_{\theta'}   \lll_{H} 
\,, \quad \theta = \theta_i\,, \ \theta' = \theta_{i-1}.
\]
This, combined with~\eqref{e:LipPtheta} implies that there is a constant $b_g'$ such that,
\[
|   \clR^g_i |^2   \le    b_g'  \big[ \tfrac{1}{\alpha_i}  H(\Phi_i)  \| \theta_i - \theta_{i-1} \| \big]^2    
    \le    b_g'      H^2(\Phi_i)   L(\Phi_i)^2    \le b_g'    v^{1/4}(\Phi_i)  v^{3/4} (\Phi_i)    \le b_g'    v (\Phi_i) .
\] 
Applying~\Cref{t:PthetaBdd} it follows that $\sup_i \Expect[ |   \clR^g_i |^2   ]<\infty$.
\end{proof}

This is now used in the following proof establishing boundedness of  $\{\hattheta_k\}$. 

\begin{subequations}
\begin{lemma}
\label[lemma]{t:hat-bound-as}
Under~{\em (V4)} and~{\em (A1)--(A2)}, we have:
\begin{align}
1 + \| \hattheta_{k+1} \|  
			 &\leq \exp\bigl(\alpha_{k+1} L(\Phi_{k+1})\bigr) (1 + \|\hattheta_k\|) \,, \quad   k\ge 0,
\label{e:exp-bound-hat-as} 
			  \\
\limsup_{k\to\infty}  \|\hattheta_k\|  &\le  2 \exp\bigl(\barpi(L) T \bigr) \,,
\quad    \text{where $\barpi(L) = \sup_\theta  \pi_\theta(L)<\infty$.
}
\label{e:bound-hat-as} 
\end{align}
\end{lemma}

\end{subequations}

\begin{proof}
We have $  	\| \hattheta_{k+1}\|  \le \|  \hattheta_k \| + \alpha_{k+1}  \| f_{c_n}(\hattheta_k, \Phi_{k+1}) \|$ for each $m_n \leq k < m_{n+1}$.
The Lipschitz bound for $f$ is inherited by $f_{c_n}$, giving $\|f_{c_n}(\hattheta_k, \Phi_{k+1})\| \leq L(\Phi_{k+1})(1 + \|\hattheta_k\|)$. 
Applying the bound $1+z\le  e^z $ with  $z =   \alpha_{k+1} L(\Phi_{k+1}) $
  gives~\eqref{e:exp-bound-hat-as}.

On iterating this bound we obtain,  for each $m_n < k \leq m_{n+1}$,
\[
	1 + \| \hattheta_k \| \leq (1 + \|\hattheta_{m_n}\|) \exp\Bigl(\sum_{i=m_n+1}^{k} \alpha_{i} L(\Phi_{i})\Bigr) \leq 2 \exp\Bigl(\sum_{i=m_n+1}^{k} \alpha_{i} L(\Phi_{i})\Bigr),
\]
which implies, with with $g_\theta = L - \pi_\theta(L)$,
\[
\begin{aligned}
		\max_{m_n\leq k \leq m_{n+1}} \|\hattheta_k\| 
		& \leq 2 \exp\Bigl(\sum_{i=m_n+1}^{m_{n+1}} \alpha_{i} L(\Phi_{i})\Bigr) 
		\\
		& \leq 2 \exp\bigl(\barpi(L) [T_{n+1} - T_n] \bigr) \exp\Bigl(\sum_{i=m_n+1}^{m_{n+1}} \alpha_{i} g_{\theta_{i}}(\Phi_{i}) \Bigr),
\end{aligned}
\]
where we used  $ \|\hattheta_{m_n}\|\le 1$ in the first inequality. The bound~\eqref{e:bound-hat-as} 
follows from~\Cref{t:LLNscaled}.        
\end{proof}

\begin{subequations}

The simple proof of the following error bounds  is omitted.   

\begin{lemma}
\label[lemma]{t:DiscreteTimePointsBdd}
Under {\em (A1)} there is a constant $b_0$ such that for each $n \ge 1$,   $m_n<k\le m_{n+1}$ and  $t\in [\SAtime_{k-1},\SAtime_k]$,
\[
\begin{aligned}
\| \ODEstate_t - \odestate^{(n)}_t \|  &  \le  \max \{ \| \ODEstate_{\SAtime_k} - \odestate^{(n)}_{\SAtime_k}  \|     ,\| \ODEstate_{\SAtime_{k-1}} - \odestate^{(n)}_{\SAtime_{k-1}}  \|     \}    
			+ b_0   \max\{   \| \odestate^{(n)}_{\SAtime_k}  \| ,  \|\odestate^{(n)}_{\SAtime_{k-1}}  \| \}\alpha_k
   \\
\| \hatODEstate_t - \hatodestate_t \|  &  \le  \max \{ \| \hatODEstate_{\SAtime_k} - \hatodestate_{\SAtime_k}  \|     ,\| \hatODEstate_{\SAtime_{k-1}} - \hatodestate_{\SAtime_{k-1}}  \|     \}    
			+ b_0   \max\{   \| \hatodestate_{\SAtime_k}  \| ,  \|\hatodestate_{\SAtime_{k-1}}  \| \}\alpha_k.
\end{aligned} 
\]
 \end{lemma}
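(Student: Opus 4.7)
The plan is to exploit the convex-combination structure of the piecewise-linear interpolation together with the integrated form of the ODE, reducing the bound to the two endpoint errors plus a small ODE increment that is controlled by $\alpha_k$. Fix $k$ with $m_n < k \le m_{n+1}$ and $t \in [\SAtime_{k-1},\SAtime_k]$, and set $\lambda = (t-\SAtime_{k-1})/\alpha_k \in [0,1]$. By definition of $\bfODEstate$ in \eqref{e:ODEstate} via linear interpolation,
\[
\ODEstate_t = (1-\lambda)\ODEstate_{\SAtime_{k-1}} + \lambda\,\ODEstate_{\SAtime_k},
\]
so writing $\odestate^{(n)}_t = (1-\lambda)\odestate^{(n)}_t + \lambda\odestate^{(n)}_t$ and applying the triangle inequality twice (inserting $\odestate^{(n)}_{\SAtime_{k-1}}$ and $\odestate^{(n)}_{\SAtime_k}$ respectively) yields
\begin{align*}
\|\ODEstate_t - \odestate^{(n)}_t\|
&\le (1-\lambda)\|\ODEstate_{\SAtime_{k-1}} - \odestate^{(n)}_{\SAtime_{k-1}}\| + \lambda\|\ODEstate_{\SAtime_k} - \odestate^{(n)}_{\SAtime_k}\| \\
&\qquad + (1-\lambda)\|\odestate^{(n)}_{\SAtime_{k-1}} - \odestate^{(n)}_t\| + \lambda\|\odestate^{(n)}_{\SAtime_k} - \odestate^{(n)}_t\|.
\end{align*}
The first line is dominated by the max of the two endpoint errors, which accounts for the first term in the claimed inequality.

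The two remaining ODE-increment terms are handled by integration. For $j\in\{k-1,k\}$,
\[
\|\odestate^{(n)}_{\SAtime_j} - \odestate^{(n)}_t\| \le \alpha_k \sup_{s\in[\SAtime_{k-1},\SAtime_k]}\|\barf(\odestate^{(n)}_s)\|.
\]
Since $\barf$ is Lipschitz with constant $\barL=\pi(L)$, we have $\|\barf(\theta)\| \le \barL\|\theta\| + \|\barf(0)\|$. A standard Gronwall argument on the interval, combined with $\alpha_k \le \bar\alpha \le 1$ from~(A1), gives a constant $C_1$ (depending only on $\barL$ and $\|\barf(0)\|$) such that
\[
\sup_{s\in[\SAtime_{k-1},\SAtime_k]} \|\odestate^{(n)}_s\| \le C_1 \max\bigl\{\|\odestate^{(n)}_{\SAtime_{k-1}}\|,\|\odestate^{(n)}_{\SAtime_k}\|\bigr\},
\]
after absorbing the inhomogeneous term using that the max is trivially lower-bounded when nonzero, or enlarging the constant to absorb $\|\barf(0)\|\alpha_k$. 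Chaining through the Lipschitz bound, we obtain $\sup_s\|\barf(\odestate^{(n)}_s)\| \le b_0 \max\{\|\odestate^{(n)}_{\SAtime_{k-1}}\|,\|\odestate^{(n)}_{\SAtime_k}\|\}$ for a suitable absolute constant $b_0$, which yields the first stated inequality.

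For the scaled bound, exactly the same reasoning applies with $\barf$ replaced by $\barf_{c_n}$, since by definition $\barf_{c_n}(\theta)=\barf(c_n\theta)/c_n$ has the same Lipschitz constant $\barL$ as $\barf$, and $\|\barf_{c_n}(0)\|=\|\barf(0)\|/c_n \le \|\barf(0)\|$. Consequently the Gronwall constant $C_1$ and the resulting $b_0$ may be chosen uniformly in $n$, and an identical chain of inequalities for $\hatODEstate_t$ and $\hatodestate_t$ gives the second claim. The main obstacle is the bookkeeping to verify that $b_0$ is genuinely independent of $n$ (and of $c_n$); this is the reason for using the Lipschitz constant of $\barf$ rather than any quantity that scales with $c_n$, and for invoking the uniform upper bound $\alpha_k \le 1$ to control the Gronwall exponential by an absolute constant.
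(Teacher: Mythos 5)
Your argument is essentially the same as the paper's. The paper writes $\ODEstate_t$ as the same convex combination of $\ODEstate_{\SAtime_{k-1}}$ and $\ODEstate_{\SAtime_k}$, compares each endpoint to $\odestate^{(n)}_t$ (rather than inserting $\odestate^{(n)}_{\SAtime_{k-1}}$ and $\odestate^{(n)}_{\SAtime_k}$ as you do) to get the max of two quantities, and then bounds $\|\odestate^{(n)}_t - \odestate^{(n)}_{\SAtime_j}\|$ by Lipschitz continuity of $\barf$ over the subinterval — an equivalent chain of triangle inequalities.

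One small remark on the last step, which applies equally to the paper's version and yours: the bound $\|\odestate^{(n)}_t-\odestate^{(n)}_{\SAtime_j}\|\le b_0\max\{\|\odestate^{(n)}_{\SAtime_k}\|,\|\odestate^{(n)}_{\SAtime_{k-1}}\|\}\alpha_k$ implicitly drops the affine term $\|\barf(0)\|\alpha_k$ arising from $\|\barf(\theta)\|\le\barL\|\theta\|+\|\barf(0)\|$; this cannot be absorbed into a constant multiple of $\max\{\|\odestate^{(n)}_{\SAtime_k}\|,\|\odestate^{(n)}_{\SAtime_{k-1}}\|\}\alpha_k$ when that max is near zero. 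The cleaner form is $b_0\bigl(1+\max\{\cdot\}\bigr)\alpha_k$, which is what the downstream applications actually use (they compare the interpolated and ODE trajectories on bounded time blocks and tolerate an $O(\alpha_k)$ remainder). Your "absorbing the inhomogeneous term" sentence gestures at this but does not fully justify it; that said, you are matching the paper's level of rigor here, not falling below it.
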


Bounds on  $ \|\hatODEstate_t -  \hatodestate_t \| $ for $t\in \{ \SAtime_k : k\ge 0\}$
 require the representation of $\Delta_{k+1}$ in~\eqref{e:noise-decomp}.   
 Denote the scaled variables by: 
\begin{equation}
\begin{aligned}
\hatDelta_{k+1}  &\eqdef   \haMD_{k+1} - \haclT_{k+1} + \haclT_{k} + \alpha_{k+1}\haOops_{k+1} 
   \\
            & \eqdef  [  \MD_{k+1}   - \clT_{k+1} + \clT_k - \alpha_{k+1}\Oops_{k+1} ]/c_n, \qquad m_n \leq k < m_{n+1}.
\end{aligned} 
\label{e:haDD}
\end{equation}
On scaling the bounds in~\Cref{t:lip-noise} we obtain,
\begin{align}
		\| \haMD_{k+1} \| & \leq b_{\haf}\bigl[ \Expect[ v^\fourth(\Phi_{k+1}) \mid \clF_{k+1}] +  v^\fourth(\Phi_{k+2}) \, \bigr] \bigl[1  + \|\hattheta_k\| \bigr]  \label{e:lip-md-L-hat}
		\\
		\|\haclT_{k+1}\|    & \leq  b_{\haf}  v^\fourth(\Phi_{k+1})      \bigl[1 + \|\hattheta_{k+1}\| \bigr] \label{e:lip-tele-L-hat} 
		\\
		\| \haOops_{k+1}\| & \leq  b_{\haf} \|L\|_{v^\fourth}   v^\half(\Phi_{k+1}) \bigl[1 + \|\hattheta_k\| \bigr]. \label{e:lip-epsy-L-hat}%
\end{align}

	\label{e:lip-noise-L-hat}

\end{subequations}%

\begin{proposition}
	\label[proposition]{t:noise-diminish-as}
	Under~{\em (V4)} and~{\em (A1)--(A2)}:
\whamem{(i)}
  The martingale   $\displaystyle \Bigl\{ \sum_{k=1}^n \alpha_{k} \haMD_{k} : n\ge 1\Bigr\}$ converges a.s.\  as $n \to \infty$.
  
\whamem{(ii)} $\displaystyle \lim_{n\to \infty} \!\! \max_{m_n\leq k < m_{n+1}} \alpha_{k+1} \| \haclT_{k+1} \| = 0$
			\hfill
\textbf{\emph{(iii)}} $\displaystyle\lim_{n\to \infty} \sum_{k=m_n}^{m_{n+1}-1} |\alpha_{k+1} - \alpha_{k+2}| \| \haclT_{k+1} \|= 0$ a.s.

\whamem{(iv)} $\displaystyle \lim_{n\to \infty} \sum_{k=m_n}^{m_{n+1}-1} \alpha_{k+1}^2 \| \haOops_{k+1}\| = 0$~a.s.
 
\end{proposition}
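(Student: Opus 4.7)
The plan is to apply the Lipschitz bounds \eqref{e:lip-noise-L-hat} together with the a.s.\ boundedness $C(\omega):=1+\sup_k\|\hattheta_k\|<\infty$ established in \Cref{t:hat-bound-as}, so that each claim reduces to controlling a sum involving $v(\Phi_{k+j})^{1/4}$ weighted by powers or differences of step sizes. The tools for these residual sums are (V4)---which via Jensen's inequality yields $\sup_k\Expect_x[v(\Phi_k)^{1/2}]<\infty$ and analogous moment bounds---combined with the summability properties of \Cref{t:step-size}. The pathwise random constant $C(\omega)$ is handled by a stopping-time truncation $\tau_N=\inf\{k:\|\hattheta_k\|>N\}$: establish deterministic bounds on the event $\{\tau_N=\infty\}$ for each $N$, then let $N\to\infty$ using a.s.\ finiteness of $\sup_k\|\hattheta_k\|$.

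For \textbf{(i)}, the key observation is that the block-dependent normalizer $c_{n(k)}$ is $\clF_{m_{n(k)}}$-measurable and hence $\clF_{k+1}$-measurable, so $\{\alpha_{k+1}\haMD_{k+2}\}$ remains a martingale difference sequence across blocks. Combining \eqref{e:lip-md-L-hat} with $(a+b)^2\le 2(a^2+b^2)$ and Jensen's inequality, the predictable quadratic variation is bounded by a constant multiple of $C(\omega)^2\sum_k\alpha_{k+1}^2\Expect[v(\Phi_{k+2})^{1/2}\mid\clF_{k+1}]$, which (on $\{\tau_N=\infty\}$) has finite expectation by (V4) and $\sum_k\alpha_k^2<\infty$. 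Hence the quadratic variation is a.s.\ finite, and the classical convergence theorem for $L^2$-bounded martingales yields a.s.\ convergence of the stopped martingale, and then of $\haMart_n$ itself by letting $N\to\infty$.

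For \textbf{(ii)}--\textbf{(iv)}, a common template applies: bound the block sum pathwise by a tail of an infinite sum whose expectation is finite, so that the full sum is a.s.\ finite and the block tails necessarily vanish. Specifically, (ii) follows from $\sup_{m_n\le k<m_{n+1}}\alpha_{k+1}^2\|\haclT_{k+2}\|^2 \le C(\omega)^2 b_{\haf}^2\sum_{k=m_n}^{m_{n+1}-1}\alpha_{k+1}^2 v(\Phi_{k+2})^{1/2}$ combined with $\Expect_x[\sum_k\alpha_{k+1}^2 v(\Phi_{k+2})^{1/2}]<\infty$ under (V4); (iii) from $\sup_k\Expect_x[v(\Phi_{k+2})^{1/4}]<\infty$ combined with $\sum_k|\alpha_{k+1}-\alpha_{k+2}|<\infty$ via \Cref{t:step-size}(ii); and (iv) from the Cauchy--Schwarz bound $\Expect_x[v(\Phi_{k+1})^{1/4}v(\Phi_{k+2})^{1/4}]\le C$ combined with $\sum_k\alpha_k^2<\infty$, so that the expected full sum is bounded by $C\cdot C(\omega)b_{\haf}\|L\|_{v^{1/4}}\sum_k\alpha_{k+1}^2<\infty$. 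The main obstacle is (i): preserving the martingale structure despite the block-dependent scaling, which is rescued by the past-measurability of $c_{n(k)}$. A secondary bookkeeping issue is the interplay between the path-wise random constant $C(\omega)$ and expectation-based arguments, addressed throughout by the stopping-time truncation.
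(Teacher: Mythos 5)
Your proposal is correct, but in parts (ii)--(iii) it follows a genuinely different route from the paper. The paper invokes \Cref{t:LLNscaled} (and hence the Poisson-equation/martingale machinery underlying it) to show that the partial sums $\sum_k\alpha_{k+1}g(\Phi_{k+2})$, with $g=v^{1/4}-\pi(v^{1/4})$, form a Cauchy sequence, and then reads off that the block suprema and block tails vanish. You instead observe that the full series $\sum_k\alpha_{k+1}^2 v(\Phi_{k+2})^{1/2}$ (and similarly $\sum_k|\alpha_{k+1}-\alpha_{k+2}|v(\Phi_{k+2})^{1/4}$) has finite expectation under $v$-uniform ergodicity and Jensen's inequality, hence is a.s.\ finite by monotone convergence, hence its block tails vanish pathwise. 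This is a strictly more elementary argument --- it uses only $\sum\alpha_k^2<\infty$, \Cref{t:step-size}(ii), and the zeroth-order ergodic moment bound, not Poisson's equation or the summation-by-parts / martingale decomposition that drive \Cref{t:LLNscaled}. Your remark in part (i) that the block scale $c_{n(k)}$ is $\clF_{k+1}$-measurable --- so that $\{\alpha_{k+1}\haMD_{k+2}\}$ remains a martingale-difference sequence across block boundaries --- makes explicit a fact the paper leaves implicit, and is worth flagging. Two minor points of efficiency: the paper's cited form of Doob's theorem already handles an a.s.\ finite (but not necessarily integrable) predictable quadratic variation, so the stopping-time localization in (i) re-derives rather than uses that result; and in (ii)--(iv) no truncation is needed at all, since once the noise series is shown to be a.s.\ finite the random multiplier $C(\omega)=1+\sup_k\|\hattheta_k\|$ can simply be pulled out pathwise. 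Also, one phrase in (iv) reads as if you were taking the expectation of a quantity containing $C(\omega)$; the correct order of operations (which you clearly intend) is to establish a.s.\ finiteness of the purely-$\Phi$ series via its expectation and only then multiply by the a.s.\ finite $C(\omega)$.
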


\begin{proof}
\Cref{t:hat-bound-as} tells us that    $b_{\hattheta}  \eqdef \sup_{k\ge 1}  \|\hattheta_k\|   <\infty $ a.s.
Given~\eqref{e:lip-md-L-hat},
\[
\begin{aligned}
	\Expect[\|\haMD_{k+1}\|^2 \mid \clF_{k}]  
			&\leq 4 b_{\haf}^2 \Expect[v^\half(\Phi_{k+1})\mid \clF_{k}](1 + \|\hattheta_k\|)^2 
	\\
	&\leq 4 b_{\haf}^2 (1 + b_{\hattheta})^2 \Expect[v^\half(\Phi_{k+1}) \mid \clF_{k}]\,, \qquad \text{ for each $k\geq 1$,}
 \\
	\sum_{k=1}^{\infty}  \alpha_{k}^2   \Expect[\|\haMD_{k+1}\|^2 &\mid \clF_{k}]  
		\leq 4 b_{\haf}^2 (1 + b_{\hattheta})^2 \sum_{k=1}^\infty \alpha_{k}^2 \Expect[v^\half(\Phi_{k+1}) \mid \clF_{k}]\,, \quad \mbox{a.s.}
	\end{aligned} 
\]
The right-hand side is finite a.s.\ since by~\Cref{t:PthetaBdd} and Jensen's inequality,
\[
\Expect\Bigl[ \sum_{k=1}^\infty \alpha_{k}^2 \Expect[v^\half(\Phi_{k+1}) \mid \clF_{k}] \Bigr] 
	\le b_v^\half  v^\half(x) 	 \sum_{k=1}^\infty \alpha_{k}^2
	< \infty \,, \quad x=\Phi_0\in\state 
\]
Part~(i) then follows by martingale convergence.

	For~(ii), it follows from~\eqref{e:lip-tele-L-hat} that for each $n\geq 1$,
\[
\max_{m_n < k \le m_{n+1}} \alpha_{k}\|\haclT_{k} \| \leq b_{\haf}(1+b_{\hattheta}) \max_{m_n < k \le m_{n+1}} \alpha_{k}  v^\fourth(\Phi_{k}).
\]
On denoting $\tilg_\theta = v^\fourth - \pi_\theta(v^\fourth)$,
 the series $\{  \sum_{k=0}^n \alpha_{k}  \tilg_{\theta_{k}}(\Phi_{k}) \}$ is convergent, by \Cref{t:LLNscaled},  
 and hence a Cauchy sequence.  
Consequently,
 \[
   \lim_{n\to\infty} 
\max_{m_n < k \le m_{n+1}}   \alpha_{k} v^\fourth(\Phi_{k})  =
   \lim_{n\to\infty} \max_{m_n < k \le m_{n+1}}  |  \alpha_{k}  \tilg_{\theta_{k}}(\Phi_{k})  |   =0\, .
 \]

Part~(iii) follows from~\Cref{t:step-size}~(ii) and~\Cref{t:LLNscaled}, and arguments similar to the proof of~(ii). 
Part~(iv)  follows from the fact that $ \| \alpha_{k}\haOops_{k}\| $   is vanishing as $k\to\infty$: It is square summable in $L_2$ and almost surely du
to~\Cref{t:hat-bound-as}
 and~\eqref{e:lip-noise}.
\end{proof}

The identity~\eqref{e:sum-by-parts} combined with~\eqref{e:hat-theta-sa}  gives for $k>m_n$,
\begin{equation} 
\begin{aligned}
\qquad
	 \hattheta_k  
				=  \hattheta_{m_n} 
				  + & \sum_{i=m_n}^{k-1} \alpha_{i+1} \barf_{c_n}(\hattheta_i) + 	 \haclE_{k} \,,
					 \\
	 \quad \text{with}
					 \quad   \haclE_{k}  \eqdef   
					 	 \sum_{i=m_n+1}^{k} &    \big[ \alpha_{i}  \haMD_{i}  -  \alpha_{i}^2\haOops_{i}  -  [\alpha_{i} - \alpha_{i+1}]\haclT_{i} \big] + \alpha_{m_n+1}\haclT_{m_n}	- \alpha_{k+1}\haclT_{k}  
  \end{aligned}
	\label{e:hat-theta-cumu-all}
\end{equation}

\begin{subequations}%

\Cref{t:pre-hat-ode} is based on~\eqref{e:hat-theta-cumu-all} and the representation:
\begin{equation}
		\label{e:hat-ode-integral}
		\hatodestate_{\SAtime_{k+1}}  = \hattheta_{m_n}  + \int_{T_n}^{\SAtime_{k+1}} \barf_{c_n}( \hatodestate_t) \, dt \,, 
		\quad k\ge m_n \, .
\end{equation}
\begin{lemma}
\label[lemma]{t:pre-hat-ode}
For $m_n\le k<m_{n+1}$,
\begin{align} 
			\hattheta_{k+1} - \hatodestate_{\SAtime_{k+1}} 
			& =   \sum_{i=m_n}^k\alpha_{i+1} \barf_{c_n}(\hattheta_i) 
				 -  \int_{T_n}^{\SAtime_{k+1}} \barf_{c_n}( \hatodestate_t) \, dt +\haclE_{k+1} 
\label{e:hat-ode-error-1} 
\\ 
		& =   \sum_{i=m_n}^k\alpha_{i+1} \{  \barf_{c_n}(\hattheta_i)
 						-    \barf_{c_n} (\hatodestate_{\SAtime_i} )\}  
								 +\haclE_{k+1}  +\haclE_{k+1}^{\clD} \,,
   \label{e:hat-ode-error-2} 
\end{align}
where $\{ \haclE_k:  m_n < k \le m_{n+1} \}$ is given in~\eqref{e:hat-theta-cumu-all}, and   $\haclE_{k+1}^{\clD} $ is the discretization error resulting from the Riemann-Stieltjes approximation of the integral~\eqref{e:hat-ode-error-1}.  
\end{lemma}

\label{e:pre-hat-ode-eqns}
\end{subequations}%

\begin{proof}
The identities (\ref{e:hat-ode-error-1}, \ref{e:hat-ode-error-2}) follow from~\eqref{e:hat-theta-cumu-all} and~\eqref{e:hat-ode-integral}. 
\end{proof}

\begin{subequations}%

We next obtain bounds on the error sequences:
\begin{equation}
\begin{aligned}
b^{\clD}(n) \eqdef\max_{m_n < k\le m_{n+1}} \|\haclE_{k}^{\clD} \|     \,,
\qquad
b^{\clN}(n)   \eqdef \max_{m_n < k\le m_{n+1}} \| \haclE_{k}\|
\end{aligned}
\label{e:bDN}
\end{equation}
from which we obtain the desired ODE approximation for the scaled parameter sequence.

\begin{lemma}
\label[lemma]{t:hat-ode-approx-as-lemma}
Under~{\em (V4)} and~{\em (A1)--(A2)},
the limit~\eqref{e:ode-method-infty}  holds, along with  
the bounds,
\begin{equation}
	\max_{m_n \leq j \leq m_{n+1}} \| \hattheta_j - \hatodestate_{\SAtime_j} \| \leq b(n) \exp\bigl( \barL  T \bigr) \, .
\label{e:haBGtheta}
\end{equation}
where $\barL  $ is the  Lipschitz constant for  $\barf$, 
and $b(n) =b^{\clD}(n)  + b^{\clN}(n) $ is a vanishing sequence:
\begin{align}
b^{\clD}(n)
& \le  \bdd{t:hat-ode-approx-as-lemma}  \sum_{i=m_n+1}^{m_{n+1}} \alpha_{i}^2 \,,  \quad 
	\text{with $ \bdd{t:hat-ode-approx-as-lemma} $ a deterministic constant};
 \label{e:discrete-error-O}
\\
\label{e:disturbance-bound}
			b^{\clN}(n) 
			& \leq \max_{m_n \leq j < m_{n+1}}  \Big\{   
				\big\|   \sum_{i=m_n+1}^{k}   \alpha_{i}  \haMD_{i}  \big\|   +
			  \alpha_{i+1} \| \haclT_{i}  \|  \Big\}+ \alpha_{m_n+1} \| \haclT_{m_n}	 \|
			  \\
			  & \qquad +  \sum_{i=m_n+1}^{k}   \Big\{  \alpha_{i}^2  \| \haOops_{i} \|   +| \alpha_{i} - \alpha_{i+1} | \|\haclT_{i} \| \Big\}
			\nonumber
\end{align}
\end{lemma}

\end{subequations}%

\begin{proof}
	The bound~\eqref{e:discrete-error-O} follows from uniform Lipschitz continuity of $  \barf_{c_n}$,  and 
 \eqref{e:disturbance-bound} follows from the   definition of $\haclE_{k}$ in~\eqref{e:hat-theta-cumu-all}.	  
The conclusion that $b(n) =b^{\clD}(n)  + b^{\clN}(n) $ is a vanishing sequence follows from~\eqref{e:discrete-error-O}  and~\Cref{t:noise-diminish-as}.  

Lipschitz continuity of $\barf_{c_n}$ also gives,
\[
 \sum_{i=m_n}^k\alpha_{i+1} \{  \barf_{c_n}(\hattheta_i)
 						-    \barf_{c_n} (\hatodestate_{\SAtime_i} )\} 
						\le
						\barL \sum_{i=m_n}^k \alpha_{i+1} \| \hattheta_{i} - \hatodestate_{\SAtime_{i}}  \|,
\]
where $\barL  $ is the common Lipschitz constant for  any of the scaled functions $ \{ \barf_{c} : c>0\}  $.   
Consequently,  from~\eqref{e:hat-ode-error-2} 
and the triangle inequality,  
\begin{equation}
\max_{m_n \leq j \leq k+1} \| \hattheta_j - \hatodestate_{\SAtime_j} \| 
			   \leq  \barL \sum_{i=m_n}^k \alpha_{i+1} \max_{m_n \leq j \leq i} \| \hattheta_j - \hatodestate_{\SAtime_j} \|  + b(n).
		\label{e:hat-ode-error-2x}
\end{equation}
The discrete Gronwall inequality and~\eqref{e:hat-ode-error-2x} implies \eqref{e:haBGtheta}.  

		Finally,  \eqref{e:hat-ode-error-2x}
 together with~\Cref{t:DiscreteTimePointsBdd}   completes the proof that~\eqref{e:ode-method-infty}  holds.
 \end{proof}

\subsection{\boldmath{$L_p$} Bounds}
\label{s:Lp}

The proof of~\Cref{t:BigBounds} is similar to the proof of~\Cref{t:BigConvergence}, but more challenging.  
Starting with the scaled iterates $\{\hattheta_k\}$, we first show 
in~\Cref{t:bounded-hat-l4} that the conditional fourth moment of $\hattheta_k$ is uniformly bounded in $k$. \Cref{t:hat-ode-approx} establishes an ODE approximation of $\{\hattheta_k\}$ in the $L_4$ sense, based on the almost sure ODE approximation~\Cref{t:hat-ode-approx-as}. The ODE approximation of $\{\hattheta_k\}$ combined with Assumption~(A3) leads to a recursive ``contraction inequality''  in~\Cref{t:contract-1/4}   which then quickly leads to the proof of~\Cref{t:BigBounds}.

\wham{Details of Proof}

 Recall that $T>0$ specifies the length of time blocks used in the ODE method described in~\Cref{s:ode-method}. We fix $T=\Trelax$ throughout this section with $\Trelax$ defined in~(A4).  All of the bounds obtained in this Appendix are for ``large $n$'':  
 Let $n_g\ge 1$ denote a fixed integer satisfying,
\begin{subequations}
 \begin{equation}
\bar{\alpha} \eqdef \sup \{  \alpha_n :  n\ge n_g \} \leq \frac{3}{4}\Bigl(\frac{1}{4\delta_L} - \Trelax\Bigr),
 \label{e:ng}
\end{equation}
where $\delta_L \eqdef \|L\|_W$.	
 For $n\ge n_g$ we obtain under~(A4),
\begin{equation}
	4\delta_L\sum_{k=m_n}^{m_{n+1}-1}  \alpha_{k+1}
		 \leq 4\delta_L (\Trelax + \bar{\alpha}) \leq 4\delta_L \Bigl(\Trelax + \frac{3}{4}\Bigl(\frac{1}{4\delta_L} - \Trelax\Bigr) \Bigr) = \frac{3}{4} + \frac{1}{4}\Trelax \cdot 4\delta_L < 1.
\end{equation}
\label{e:block-bound}
\end{subequations}

Recall  the definition $\hattheta_k \eqdef \theta_k / c_n$ for $m_n \leq k \leq m_{n+1}$, where $c_n = \max\{1, \|\theta_{m_n}\|\}$.

\begin{proposition}
	\label[proposition]{t:bounded-hat-l4}
	Under~{\em (DV3)} and~{\em (A1)--(A4)},  the following bounds hold for each $n \ge n_g$ and $m_n \le k <   m_{n+1} $:
\whamem{(i)}
$\displaystyle
 \Expect[\exp\bigl( V(\Phi_{k+1}) \bigr)(1 + \|\hattheta_k\|)^{4} \mid \clF_{m_n+1}]  \leq 16b_v^2 e^b \exp\bigl( V(\Phi_{m_n+1}) \bigr) ;
		$
\whamem{(ii)}
$\displaystyle 
			(\|\theta_k\|+1)^4 \leq 16 \exp\bigl(4\delta_L  \sumalphaW_{m_n\, ,  m_{n+1} -1}      \bigr) (\|\theta_{m_n}\| + 1)^4$,
			where,
\begin{equation}
\sumalphaW_{\ell,k}  \eqdef   \sum_{j=\ell}^{k} \alpha_{j+1}W(\Phi_{j+1})   \,,   \qquad  0\le \ell\le k   .
\label{e:sumalphaW}
\end{equation}
\end{proposition}

\begin{proof}    
The bound $L \le \delta_L W$ combined with~\eqref{e:exp-bound-hat-as} gives,
\[
\begin{aligned}
		1 + \| \hattheta_{k+1} \| 		
		& \leq  \exp\bigl(\delta_L \alpha_{k+1} W(\Phi_{k+1})\bigr) (1 + \|\hattheta_k\|).
\end{aligned}
\]  
Iterating this inequality, we obtain for each $m_n \leq k < m_{n+1}$,
\begin{equation}
		\label{e:exp-bound-hat}
	\begin{aligned}
			 1 + \| \hattheta_{k+1} \| 
			&\leq \exp\bigl(\delta_L   \sumalphaW_{m_n,k}       \bigr) (1 + \|\hattheta_{m_n}\|)
			\\
	&\quad \Longrightarrow  			c_n + \| \theta_{k+1} \| 
	 \leq \exp\bigl(\delta_L   \sumalphaW_{m_n,k}   \bigr) (c_n + \|\theta_{m_n}\|).	
\end{aligned}
\end{equation}
Then~(ii) follows from the facts that   $1\le c_n\le 1 + \|\theta_{m_n}\|$.

Since  $1 + \|\hattheta_{m_n}\| \leq 2$ by construction,~\eqref{e:exp-bound-hat} also gives,
\[
\begin{aligned}
	\exp\bigl(V(\Phi_{k+1})\bigr)(1 + \| \hattheta_{k+1} \|)^4
		& \leq 16\exp\Bigl( V(\Phi_{k+1}) +4\delta_L  \clS_{m_n,k}^W\Bigr)
\\
		\Expect\bigl[ \exp(V(\Phi_{k+1})) (1 + \| \hattheta_{k+1} \|)^4 \mid \clF_{m_n+1} \bigr] 
		&\leq 16\Expect\bigl[ \exp\Bigl( V(\Phi_{k+1}) + 4\delta_L       \sumalphaW_{m_n,k}	
		\Bigr) \mid \clF_{m_n+1}  \bigr] \\
		&\leq 16b_v^2 e^b \exp\bigl(V(\Phi_{m_n+1}) \bigr),
\end{aligned}
\]
where the final inequality follows from~\Cref{t:DV3multBdd} and~\eqref{e:block-bound}. This establishes~(i).
\end{proof}

For any random vector $X$, we denote $\|X\|_4^{(n)}\eqdef \Expect \bigl[\|X\|^4 \mid \clF_{m_n+1} \bigr]^{\frac{1}{4}}$, and obtain bounds for various choices of $X$.

\begin{proposition}
\label[proposition]{t:hat-noise-bounds}
Under~{\em (DV3)} and~{\em (A1)--(A4)}, there exists a constant $\bdd{t:hat-noise-bounds} < \infty$ such that for all $n \geq n_g$ and $m_n\le k< m_{n+1} $, 
\begin{subequations}
		\label{e:hat-bound-noise}
	\begin{align}
			 \| \haMD_{k+1}\|_4^{(n)} &\leq \bdd{t:hat-noise-bounds}   \exp(\fourth V(\Phi_{m_n+1})) 
\label{e:hat-bound-md} 
\\
			  \| \haclT_{k+1} \|_4^{(n)} &\leq \bdd{t:hat-noise-bounds}  \exp(\fourth V(\Phi_{m_n+1}))  
\label{e:hat-bound-tele} 
\\
			 \| \haOops_{k+1}\|_4^{(n)} & \leq \bdd{t:hat-noise-bounds}   \exp(\fourth V(\Phi_{m_n+1})). \label{e:hat-bound-epsy}
\end{align}%
\end{subequations}%
\end{proposition}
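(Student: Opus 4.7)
The plan is to reduce each of the three bounds to an expectation covered either by \Cref{t:DV3multBdd} or by \Cref{t:bounded-hat-l4}(i). All three estimates start from the pointwise Lipschitz bound under (DV3) from \Cref{t:lip-noise}, scaled by $c_n^{-1}$: for example,
\[
\|\haclT_{k+1}\| \le b_{\haf}\bigl(1+V(\Phi_{k+1})\bigr)\bigl(1+\|\hattheta_k\|\bigr),
\]
and analogously for $\haMD_{k+2}$ and $\haepsy_{k+2}$. A recurring device will be the elementary inequality $y^p \le C_{\lambda,p}\,e^{\lambda y}$, valid for every $\lambda,p>0$ and $y\ge 0$, used to convert polynomial growth in $V$ or $W$ into exponentials.

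The bound on $\|\haclT_{k+1}\|_4^{(n)}$ is immediate: raise the displayed inequality to the fourth power, apply $(1+V)^4 \le Ce^{V}$, take $\Expect[\cdot\mid\clF_{m_n+1}]$, and invoke \Cref{t:bounded-hat-l4}(i). For $\|\haMD_{k+2}\|_4^{(n)}$ the expression also contains $\Expect[V(\Phi_{k+2})\mid\clF_{k+1}]$; Jensen's inequality applied to the logarithm of (DV3) gives $\Expect[V(\Phi_{k+2})\mid\clF_{k+1}] \le V(\Phi_{k+1}) + b$, while the remaining $V(\Phi_{k+2})^4$ is controlled by conditioning on $\clF_{k+1}$ and invoking \Cref{t:v-uni-w}(ii): $\Expect[V(\Phi_{k+2})^4 \mid \clF_{k+1}] \le C v(\Phi_{k+1}) = Ce^{V(\Phi_{k+1})}$. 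These contributions combine to $\Expect[\|\haMD_{k+2}\|^4 \mid \clF_{k+1}] \le C' e^{V(\Phi_{k+1})}(1+\|\hattheta_k\|)^4$, and \Cref{t:bounded-hat-l4}(i) closes the loop.

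The delicate case is $\haepsy_{k+2}$, because of the additional factor $L(\Phi_{k+1})$ and because the prefactor $\alpha_{k+1}$ must survive. Using $L \le \delta_L W$ together with $W^4 \le C_{\delta_0}e^{\delta_0 W}$ and $(1+V)^4 \le Ce^{V}$ yields
\[
\|\haepsy_{k+2}\|^4 \le C\,\alpha_{k+1}^4\, e^{V(\Phi_{k+2}) + \delta_0 W(\Phi_{k+1})}\bigl(1+\|\hattheta_k\|\bigr)^4.
\]
Invoking \Cref{t:bounded-hat-l4}(i) here would waste the factor $e^{\delta_0 W(\Phi_{k+1})}$, so instead I substitute the pointwise exponential bound derived inside the proof of \Cref{t:bounded-hat-l4},
\[
\bigl(1+\|\hattheta_k\|\bigr)^4 \le 16\exp\!\Bigl(4\delta_L\sum_{i=m_n}^{k-1}\alpha_{i+1}W(\Phi_{i+1})\Bigr),
\]
so that the integrand becomes $e^{V(\Phi_{k+2})}$ times an exponential of a weighted sum of $W$-values at times $\le k+1$, which is precisely the form handled by \Cref{t:DV3multBdd}.

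The principal technical obstacle is uniformity: the total weight on the $W$-values is $\delta_0 + 4\delta_L\sum_{i=m_n}^{k-1}\alpha_{i+1}$, and this must stay at most one over all admissible $n,k$. By \eqref{e:ng} and \eqref{e:block-bound}, for $n \ge n_g$ one has $4\delta_L\sum_{i=m_n}^{m_{n+1}-1}\alpha_{i+1} \le \delta_L\Trelax + 3/4 < 1$, where the strict inequality is the content of (A4a). This leaves a universal positive slack $1/4 - \delta_L\Trelax$, into which $\delta_0$ can be fixed once and for all, independently of $n$ and $k$. \Cref{t:DV3multBdd} then delivers $\Expect[\|\haepsy_{k+2}\|^4 \mid \clF_{m_n+1}] \le C'' \alpha_{k+1}^4 e^{V(\Phi_{m_n+1})}$, and taking fourth roots establishes all three bounds with a single constant $\bdd{t:hat-noise-bounds}$. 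The case (A4b) reduces to (A4a) by \Cref{t:A4ab}.
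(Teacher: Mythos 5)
Your proof is correct, and parts (i) and (ii) follow essentially the same path as the paper's. For part (iii), however, you take a genuinely different route. The paper's proof of \eqref{e:hat-bound-epsy} writes $L(\Phi_{k+1})^4\Expect[(1+V(\Phi_{k+2}))^4\mid\clF_{k+1}] \le b_0\exp(V(\Phi_{k+1}))$, which rests on $\|L^8\|_v<\infty$; under (DV3) this is not an explicitly stated part of (A2) (which only gives $L\le\delta_L W$) but follows from the observation $W\le V+b$ (a consequence of the Jensen step in \Cref{t:fund-kernel-z}(ii)), a point the paper leaves implicit. With that bound in hand, the paper then applies \Cref{t:bounded-hat-l4}(i) as a black box. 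You instead work with $L\le\delta_L W$ directly, absorb $W^4$ into $\exp(\delta_0 W)$, unpack the pointwise exponential bound on $(1+\|\hattheta_k\|)^4$ from the proof of \Cref{t:bounded-hat-l4}, and invoke \Cref{t:DV3multBdd} directly, fitting $\delta_0$ into the uniform slack $\tfrac{1}{4}-\delta_L\Trelax>0$ left by \eqref{e:block-bound} under (A4a). Both routes ultimately rely on the same engine (\Cref{t:DV3multBdd}, with the block-length budget determined by (A4a)); yours makes the budget constraint explicit and avoids the implicit derivation of $\|L^8\|_v<\infty$, at the modest cost of reaching inside the proof of \Cref{t:bounded-hat-l4} rather than citing it as a black box. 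Your remark about \Cref{t:A4ab} correctly handles the (A4b) case.
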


\begin{proof}
	Bounds identical to~\eqref{e:lip-noise} hold for the scaled noise components: 
\[
\begin{aligned}
		\| \haMD_{k+1} \| & \leq b_{\haf} \bigl(2 + \Expect[V(\Phi_{k+1}) \mid \clF_k] + V(\Phi_{k+1}) \bigr) \bigl[1  + \|\hattheta_k\| \bigr] \\
		\|\haclT_{k+1}\|    & \leq b_{\haf}  \bigl[ 1+  V(\Phi_{k+1})    \bigr] \bigl[1 + \|\hattheta_{k}\| \bigr] \\
		\| \haOops_{k+1}\| & \leq  b_{\haf}   \bigl[ 1+  V(\Phi_{k+1})    \bigr] L(\Phi_{k+1})  \bigl[1 + \|\hattheta_k\| \bigr] .
\end{aligned}
\]
Consider first the martingale difference term $\haMD_{k+1}$:
\[
	\|\haMD_{k+1}\|^4 \leq  b_{\haf}^4 \bigl(2 + \Expect[V(\Phi_{k+1}) \mid \clF_{k}] + V(\Phi_{k+1}) \bigr)^4(1 + \|\hattheta_k\|)^4.
\]
Taking conditional expectations gives,
\[
\begin{aligned}
		\Expect & \bigl[\|\haMD_{k+1}\|^4 \mid \clF_{m_n+1} \bigr]  
		\\
		& \leq b_{\haf}^4 \Expect\Bigl[\bigl(2 + \Expect[V(\Phi_{k+1}) \mid \clF_k] + V(\Phi_{k+1}) \bigr)^4(1 + \|\hattheta_k\|)^4\mid \clF_{m_n+1} \Bigr] \\
		& = b_{\haf}^4  \Expect\Bigl[ (1 + \|\hattheta_k\|)^4\Expect\bigl[\bigl(2 + \Expect[V(\Phi_{k+1}) \mid \clF_k] + V(\Phi_{k+1}) \bigr)^4\mid \clF_k \bigr] \mid \clF_{m_n+1} \Bigr] .
\end{aligned}
\]

We next apply the bound
the bound $V^4 \le b_4^e e^V$ for a finite constant  $b_4^e$, along with 
 $\bigl(2 + \Expect[V(\Phi_{k+1}) \mid \clF_k] + V(\Phi_{k+1}) \bigr)^4 \leq 32 + 16\Expect[V^4(\Phi_{k+1}) \mid \clF_k] + 16V^4(\Phi_{k+1}) $,  
to conclude, 
\[
\begin{aligned}
\Expect\Bigl[\bigl(2 + \Expect[V(\Phi_{k+1}) \mid \clF_k] + V(\Phi_{k+1}) \bigr)^4\mid \clF_k \Bigr] 
	& \leq 32 + 32\Expect[V^4(\Phi_{k+1}) \mid \clF_k] 
	\\
	 &\le  32[1 +  b_4^e] \Expect[  \exp(V(\Phi_{k+1})) \mid \clF_k] 
\end{aligned}
\]
Therefore,  $\displaystyle
		\Expect\bigl[ \|\haMD_{k+1}\|^4 \mid \clF_{m_n+1} \bigr] 
		\leq 32[1 + b_4^e] \Expect\bigl[\exp\bigl( V(\Phi_{k+1}) \bigr)(1 + \|\hattheta_k\|)^4 \mid \clF_{m_n+1}  \bigr]$.   
The bound~\eqref{e:hat-bound-md}  then follows from~\Cref{t:bounded-hat-l4}~(i).
	
The telescoping term  admits the bound 
$\displaystyle
	\|\haclT_{j+1}\|^4 \leq  b_{\haf}^4   [ 1+ V(\Phi_{j+1}) ]^4   [ 1 + \|\hattheta_j\|]^4
$ for any $m_n\le j < m_{n+1}$.   
The fact $\|(1+ V)^4\|_v < \infty$ gives~\eqref{e:hat-bound-tele}.
	
Turning to the final term, we have for some $b_0<\infty$,
\[
\begin{aligned}
		\Expect \bigl[ \|\haOops_{k+1}\|^4 \mid \clF_{m_n+1} \bigr]
	\leq   b_{\haf}^4 \Expect\bigl[  \bigl(1 + \|\hattheta_k\|\bigr)^4 
		&	\big\{  [1+V(\Phi_{k+1})] L(\Phi_{k+1}) \big\}^4 \mid \clF_{m_n+1} \bigr] 
\\
   \textit{with} \quad& \big\{  [1+V(\Phi_{k+1})] L(\Phi_{k+1}) \big\}^4   \le  b_0 \exp(  V(\Phi_{k+1})) \,,
\end{aligned}
\]
where the existence of  $b_0 <\infty$ in the
 second inequality follows from the fact that  $\|L^p\|_v<\infty$ and 
  $\|(1+ V)^p\|_v < \infty$    for any $p\ge 1$ under~(A2).  
Consequently, 
\[
	\Expect\bigl[ \|\haOops_{k+1}\|^4 \mid \clF_{m_n+1} \bigr] \leq   b_{\haf}^4 b_0 \Expect\bigl[ (1 + \|\hattheta_k\|)^4 \exp(V(\Phi_{k+1}))  \mid \clF_{m_n+1} \bigr].
\]
The desired bound in~\eqref{e:hat-bound-epsy} follows from another application of~\Cref{t:bounded-hat-l4}~(i).
\end{proof}

\begin{lemma}
	\label[lemma]{t:bound-md}
	Let $\{X_k\}$ be a martingale difference sequence satisfying $\Expect[\| X_k\|^4] < \infty$ for each $k$. There exists a constant $\bdd{t:bound-md}<\infty$ such that for any non-negative scalar sequence $\{\delta_k\}$ and $m>0, n > m$, 
\[
	\Expect[\max_{m\leq k\leq n} \| \sum_{i=m}^k \delta_i X_i \|^4 ] \leq \bdd{t:bound-md} \bigl(\sum_{i=m}^n \delta_i^2\bigr)^2 \max_{m\leq k\leq n} \Expect[\|X_k\|^4].
\]
\end{lemma}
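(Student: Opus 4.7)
The plan is to combine Doob's $L^4$ maximal inequality for martingales with the Burkholder inequality for the fourth moment of a martingale, and then apply Cauchy--Schwarz to introduce $\sum\delta_i^2$.

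First I would set $S_k \eqdef \sum_{i=m}^{k}\delta_i X_i$ and observe that $\{S_k\}_{k\ge m}$ is a martingale with respect to the natural filtration of $\{X_k\}$, since each $\delta_i$ is deterministic and each $X_i$ is a martingale difference. Doob's $L^4$ inequality then yields
\[
\Expect\Bigl[\sup_{m\le k\le n}\|S_k\|^4\Bigr]\le \Bigl(\tfrac{4}{3}\Bigr)^4 \Expect[\|S_n\|^4].
\]
It therefore suffices to bound $\Expect[\|S_n\|^4]$ by the desired right-hand side. For this, I would invoke Burkholder's inequality at exponent $p=4$ applied coordinatewise (or directly in vector form), which gives a constant $C_4<\infty$ such that
\[
\Expect[\|S_n\|^4]\le C_4\, \Expect\Bigl[\Bigl(\sum_{i=m}^{n}\delta_i^2 \|X_i\|^2\Bigr)^{2}\Bigr].
\]

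Next, I would apply the Cauchy--Schwarz inequality to the inner sum with weights $\delta_i^2$:
\[
\Bigl(\sum_{i=m}^{n}\delta_i^2 \|X_i\|^2\Bigr)^2
=\Bigl(\sum_{i=m}^{n}\delta_i\cdot \delta_i\|X_i\|^2\Bigr)^2
\le \Bigl(\sum_{i=m}^{n}\delta_i^2\Bigr)\Bigl(\sum_{i=m}^{n}\delta_i^2 \|X_i\|^4\Bigr).
\]
Taking expectations and using $\Expect[\|X_i\|^4]\le \sup_{m\le k\le n}\Expect[\|X_k\|^4]$ produces
\[
\Expect\Bigl[\Bigl(\sum_{i=m}^{n}\delta_i^2\|X_i\|^2\Bigr)^{2}\Bigr]
\le \Bigl(\sum_{i=m}^{n}\delta_i^2\Bigr)^2 \sup_{m\le k\le n}\Expect[\|X_k\|^4].
\]
Chaining the three inequalities gives the claim with $\bdd{t:bound-md}=(4/3)^4 C_4$.

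The main technical subtlety is justifying the Burkholder step for $\Re^d$-valued martingales. This is a standard fact (follow from the scalar version by summing the bounds for each coordinate, absorbing the dimension into the constant, since $\|S_n\|^4\le d\sum_{j=1}^d |S_n^{(j)}|^4$ and $|X_i^{(j)}|\le \|X_i\|$). An alternative that avoids citing Burkholder is a direct expansion of $\Expect[\|S_n\|^4]$: writing $\|S_n\|^4=\bigl(\sum_{i,j}\delta_i\delta_j\langle X_i,X_j\rangle\bigr)^2$ and using orthogonality of martingale differences to kill all cross terms except those of the form $\delta_i^2\delta_j^2\Expect[\langle X_i, X_j\rangle^2]$ or $\delta_i^2\delta_j^2 \Expect[\|X_i\|^2\|X_j\|^2]$, which are controlled by $\bigl(\sum_i \delta_i^2\bigr)^2\sup_k \Expect[\|X_k\|^4]$ via Cauchy--Schwarz. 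Either route yields the stated inequality.
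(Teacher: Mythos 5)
Your proof is correct and follows essentially the same route as the paper: both rely on Burkholder's inequality followed by the identical Cauchy--Schwarz step to pull out $\bigl(\sum_i \delta_i^2\bigr)^2$. The only cosmetic difference is that you separate Doob's maximal inequality from Burkholder's fourth-moment bound, whereas the paper cites a Burkholder--Davis--Gundy form that already controls the running supremum in one step.
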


\begin{proof}
 Burkholder's inequality~\cite[Lem.~6, Ch.~3, Part~II]{benmetpri12} implies the desired bound:  For a constant $\bdd{t:bound-md}<\infty$,
\[
\begin{aligned}
		\Expect\bigl[ \max_{m \leq k \leq n} \bigl\| \sum_{i = m}^k \delta_{i} X_{i} \bigl\|^4 \bigr] 
		\leq \bdd{t:bound-md} \Expect\bigl[ \Bigl(\sum_{i = m}^{n} \delta_{i}^2 \|X_{i}\|^2 \Bigr)^2 \bigr] 
		& \leq \bdd{t:bound-md} \Expect\bigl[ \Bigl(\sum_{i = m}^{n} \delta_{i}^2\Bigr) \Bigl(\sum_{i = m}^{n}\delta_{i}^2 \|X_{i}\|^4 \Bigr) \bigr] \\
		& \leq \bdd{t:bound-md} \bigl(\sum_{i=m}^n \delta_i^2\bigr)^2 \max_{m\leq k\leq n} \Expect[\|X_k\|^4],
\end{aligned}
\]
where the second inequality follows from the Cauchy-Schwarz inequality.
\end{proof}

The bound~\eqref{e:hat-ode-error-2x} continues to hold:
\begin{equation}
\begin{aligned}
		\max_{m_n \leq j \leq k+1} \| \hattheta_j - \hatodestate_{\SAtime_j} \| 
		&  \leq  \barL \sum_{i=m_n}^k \alpha_{i+1} \max_{m_n \leq j \leq i} \| \hattheta_j - \hatodestate_{\SAtime_j} \| + b^{\clD}(n)  + b^{\clN}(n) \,, 
\end{aligned}
\label{e:e:hat-ode-error-2x-RepeatingOurselves}
\end{equation} 
The first error term is bounded by a deterministic vanishing sequence   
(recall~\eqref{e:discrete-error-O}),  and  $\{ b^{\clN}(n) \}$ defined 
in~\eqref{e:bDN}  is bounded in the following.

\begin{lemma}
\label[lemma]{t:bclN4}
For a deterministic vanishing sequence $\{ \bdds{t:bclN4}(n) : n\ge 1 \} $ 
we have:
$$\| b^{\clN}(n) \|_4^{(n)} \le \bdds{t:bclN4} (n) \exp( V(\Phi_{m_n+1})).$$
\end{lemma}

\clearpage

 \begin{proof}
Applying~\Cref{t:bound-md} to the sum of martingale difference terms 
in~\eqref{e:disturbance-bound},
\[
\begin{aligned}
		\Expect\bigl[ \max_{m_n < j \leq k} \bigl\| \sum_{i = m_n+1}^{j} \alpha_{i} \haMD_{i} \bigl\|^4 \mid \clF_{m_n+1} \bigr] 
		&\leq \bdd{t:bound-md}  \Bigl(\sum_{i = m_n+1}^{m_{n+1}} \alpha_{i}^2\Bigr)^2 \max_{m_n <  k \le m_{n+1}} \Expect\bigl[ \|\haMD_{k}\|^4 \mid \clF_{m_n+1} \bigr] 
		\\
		& \leq \bdd{t:bound-md} [\bdd{t:hat-noise-bounds}]^4 \exp(V(\Phi_{m_n+1})) \Bigl(\sum_{i = m_n+1}^{m_{n+1}} \alpha_{i}^2\Bigr)^2 ,
\end{aligned}
\]
where the second inequality follows from~\eqref{e:hat-bound-md} in~\Cref{t:hat-noise-bounds}.
	
Consider next the supremum involving $\{\haclT_i\}$:
\[
\begin{aligned}
		\Expect[\max_{m_n \le i \leq k} \alpha_{i+1}^4 \|\haclT_{i}\|^4 \mid \clF_{m_n+1} ]
		& \leq \sum_{i = m_n}^{m_{n+1}-1} \alpha_{i+1}^4 \Expect[\|\haclT_{i}\|^4 \mid \clF_{m_n+1} ] \\
		& \leq [\bdd{t:hat-noise-bounds}]^4 \exp\bigl(V(\Phi_{m_n+1})\bigr)\sum_{i = m_n}^{m_{n+1}-1} \alpha_{i+1}^4.
\end{aligned}
\]
By the triangle inequality and~\eqref{e:hat-bound-tele} of~\Cref{t:hat-noise-bounds},
\[
\begin{aligned}
		\Bigl\|  \sum_{i=m_n+1}^{k} |\alpha_{i} - \alpha_{i+1}| \|\haclT_{i}\|   \Bigr\|_4^{(n)} 
		& \leq  \sum_{i=m_n+1}^{k} |\alpha_{i} - \alpha_{i+1}| \|\haclT_{i}\|_4^{(n)}
		\\
		& \leq \bdd{t:hat-noise-bounds} \exp\bigl(\fourth V(\Phi_{m_n+1})\bigr)\sum_{i = m_n+1}^{m_{n+1}}|\alpha_{i} - \alpha_{i+1}| .
\end{aligned}
\]
\Cref{t:step-size}~(ii) asserts that the sum vanishes as $n\to\infty$.
	
	Similarly,  it follows from~\eqref{e:hat-bound-epsy} of~\Cref{t:hat-noise-bounds} that for $k < m_{n+1}$,  
\[
\begin{aligned}
	\Bigl\| \sum_{i = m_n+1}^k \alpha_{i}^2\| \haOops_{i}\|\Bigr\|_4^{(n)} 
			& \leq \bdd{t:hat-noise-bounds} \exp(\fourth V(\Phi_{m_n+1})) \sum_{i=m_n+1}^{m_{n+1}} \alpha_{i}^2 .
\end{aligned} 
\]
Combining these bounds completes the proof.
\end{proof}

The next result extends~\Cref{t:hat-ode-approx-as} to an $L_4$ bound.  
\begin{proposition}
\label[proposition]{t:hat-ode-approx}
Under~{\em (DV3)} and~{\em (A1)--(A4)},  there is a vanishing deterministic sequence $\{\bdds{t:hat-ode-approx}_n : n\ge 1\}$ such that:
\begin{equation}
\label{e:hat-ode-approxLP}
		\Expect[\sup_{t\in [T_n, T_{n+1})} \| \hatODEstate_t -  \hatodestate_t\|^4 \mid \clF_{m_n+1} ]^\fourth \leq \bdds{t:hat-ode-approx}_n\exp(\fourth V(\Phi_{m_n+1})).
\end{equation}
\end{proposition}

\begin{proof}
Returning to~\eqref{e:e:hat-ode-error-2x-RepeatingOurselves}, recall that $ b^{\clD}(n) $ is a vanishing deterministic sequence.  
Applying~\Cref{t:bclN4},   we  can find a deterministic vanishing sequence $\{\barb(n) \}$ such that,
\[
\begin{aligned}
\Expect\bigl[  \max_{m_n \leq j \leq k+1} & \| \hattheta_j - \hatodestate_{\SAtime_j} \|^4 \mid \clF_{m_n+1} \bigr]^{\frac{1}{4}}
\\
			&  \leq   \barL\sum_{i=m_n}^k \alpha_{i+1} \Expect\bigl[ \max_{m_n \leq j \leq i} \| \hattheta_j - \hatodestate_{\SAtime_j} \|^4 \mid \clF_{m_n+1} \bigr]^{\frac{1}{4}} +  \barb(n) \exp(\fourth V(\Phi_{m_n+1})),
\end{aligned}
\]
for all $m_n\leq k < m_{n+1}$.
By the discrete Gronwall's inequality,
\[
	\Expect\bigl[  \max_{m_n \leq j \leq m_{n+1}} \| \hattheta_j - \hatodestate_{\SAtime_j} \|^4 \mid \clF_{m_n+1} \bigr]^{\frac{1}{4}} \leq \barb(n) \exp(\fourth V(\Phi_{m_n+1}))\exp(\barL[T+ 1]).
\]
This combined with~\Cref{t:DiscreteTimePointsBdd} completes the proof.  
\end{proof}

\begin{lemma}
	\label[lemma]{t:contract-1/4}
	Under~{\em (DV3)} and~{\em (A1)--(A4)}, we can find a constant $\bdd{t:contract-1/4}<\infty$ such that for all $n\ge n_g$, 
\begin{equation}
		\Expect \bigl[\| \theta_{m_{n+1}}\|^4 \mid \clF_{m_n+1}\bigr]^{\frac{1}{4}} \leq \Bigl( \Crelax +  \bdds{t:hat-ode-approx}_n \exp(\fourth V(\Phi_{m_n+1})) \Bigr)(\|\theta_{m_n}\| + 1)  + \bdd{t:contract-1/4},
				\label{e:contract-1/4}
\end{equation}
	where $0 < \Crelax < 1$ is defined in~\Cref{t:contraction-ratio}.
\end{lemma}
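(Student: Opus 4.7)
The plan is to decompose $\theta_{m_{n+1}} = c_n \hattheta_{m_{n+1}}$ and split this via the triangle inequality into an ``ODE part'' and an ``ODE-approximation error part'':
\[
\|\theta_{m_{n+1}}\| \;\leq\; c_n\|\hatodestate_{T_{n+1}}\| \;+\; c_n \|\hattheta_{m_{n+1}} - \hatodestate_{T_{n+1}}\|.
\]
Since $c_n$ and $\hatodestate_{\cdot}$ are $\clF_{m_n+1}$-measurable (the ODE~\eqref{e:rescaled-ode} is driven by a deterministic vector field once we fix the initial condition $\theta_{m_n}/c_n$ and the scale $c_n$), taking conditional $L_4$ norms pulls $c_n\|\hatodestate_{T_{n+1}}\|$ outside the expectation. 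For the error term, \Cref{t:hat-ode-approx} directly yields
\[
c_n\,\Expect\bigl[\|\hattheta_{m_{n+1}} - \hatodestate_{T_{n+1}}\|^4 \mid \clF_{m_n+1}\bigr]^{\fourth}
\;\leq\; c_n\,\bdds{t:hat-ode-approx}_n \exp\bigl(\tfrac{1}{4}V(\Phi_{m_n+1})\bigr),
\]
and since $c_n \leq \|\theta_{m_n}\| + 1$, this contributes exactly the $\bdds{t:hat-ode-approx}_n \exp(\tfrac{1}{4}V(\Phi_{m_n+1}))(\|\theta_{m_n}\|+1)$ summand appearing in \eqref{e:contract-1/4}.

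To handle $c_n\|\hatodestate_{T_{n+1}}\|$, I will split on whether $c_n \geq c_0$, where $c_0$ is the threshold from \Cref{t:contraction-ratio}(ii). In the first case, $c_n = \|\theta_{m_n}\|$ so that $\|\hatodestate_{T_n}\| = 1$; by \Cref{t:contraction-ratio}(ii), applied over the time interval $T_{n+1}-T_n \in [\Trelax,\Trelax+\bar{\alpha}] \subseteq [\Trelax,\Trelax+1]$ (using $\bar{\alpha}\leq 1$ from~(A1)), we get $\|\hatodestate_{T_{n+1}}\| \leq \Crelax$. Therefore
\[
c_n\|\hatodestate_{T_{n+1}}\| \;\leq\; \Crelax c_n \;\leq\; \Crelax(\|\theta_{m_n}\|+1).
\]
In the second case $c_n < c_0$, a direct Gronwall estimate applied to~\eqref{e:rescaled-ode} — using the uniform Lipschitz bound on the family $\{\barf_c\}$ inherited from~\eqref{e:lip-f} and the fact that the length of the time block is bounded by $\Trelax+1$ — gives a deterministic bound $\|\hatodestate_{T_{n+1}}\| \leq B$ for some $B$ depending only on $\Trelax$ and $\barL$. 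Thus $c_n\|\hatodestate_{T_{n+1}}\| \leq c_0 B$, which can be absorbed into the additive constant $\bdd{t:contract-1/4}$.

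Combining both cases yields
\[
c_n\|\hatodestate_{T_{n+1}}\| \;\leq\; \Crelax(\|\theta_{m_n}\|+1) + c_0 B,
\]
and adding the $L_4$ bound on the error term produces~\eqref{e:contract-1/4} with $\bdd{t:contract-1/4} \eqdef c_0 B$. I do not foresee a genuine obstacle here: all the heavy lifting has been done in \Cref{t:contraction-ratio} and \Cref{t:hat-ode-approx}. The only subtlety to double-check is that the time interval $[T_n, T_{n+1}]$ sits inside $[\Trelax, \Trelax+1]$ (so \Cref{t:contraction-ratio}(ii) applies verbatim) and that $c_n$, being $\clF_{m_n+1}$-measurable, factors cleanly through both the conditional $L_4$ norm and the deterministic ODE bound.
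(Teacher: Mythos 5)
Your proposal is correct and follows essentially the same route as the paper: decompose $\theta_{m_{n+1}} = c_n\hattheta_{m_{n+1}}$, separate the rescaled-ODE trajectory $\hatodestate_{T_{n+1}}$ from the $L_4$-approximation error via the triangle inequality, invoke \Cref{t:hat-ode-approx} for the error term, and split on $\|\theta_{m_n}\|\geq c_0$ (contraction from \Cref{t:contraction-ratio}) versus $\|\theta_{m_n}\| < c_0$ (crude Gronwall bound absorbed into the additive constant). The only cosmetic difference is that you explicitly note $c_n\|\hatodestate_{T_{n+1}}\|$ is $\clF_{m_n+1}$-measurable and pull it out of the conditional $L_4$ norm, whereas the paper carries the (vacuous) conditional expectation through and uses indicator functions for the case split; the two are equivalent.
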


\begin{proof}
The proof is similar to~\Cref{t:bddpar}.
We begin with~\Cref{t:contraction-ratio},  giving:
 \begin{equation}
		\label{e:c0-contract}
		\|\phi_c(t,\theta_0)\| \leq \Crelax < 1\,, \text{ for all } t\in [\Trelax, \Trelax+1] \,,   \ \ c \geq c_0.
\end{equation}
By the triangle inequality, for each $n\ge n_g$ and  $c_{m_n} =\max\{1, \| \theta_{m_n}\|\} $,
\[
\begin{aligned}
  \frac{1}{c_{m_n}} 
  &
		\Expect \bigl[\| \theta_{m_{n+1}}\|^4 \mid \clF_{m_n+1}\bigr]^{\frac{1}{4}}
		 =   \Expect\bigl[\| \hattheta_{m_{n+1}}\|^4 \mid \clF_{m_n+1}\bigr]^{\frac{1}{4}}   
		 \\
		&  \leq \Expect\bigl[\| \hatODEstate_{T_{n+1}}- \hatodestate_{T_{n+1}}\|^4  \mid \clF_{m_n+1}\bigr]^{\frac{1}{4}}   
  + \Expect\bigl[\| \hatodestate_{T_{n+1}}\|^4 \, \ind\{\|\theta_{m_n}\|\geq c_0\} \mid \clF_{m_n+1}\bigr]^{\frac{1}{4}}    \\
		& \qquad + \Expect\bigl[\| \hatodestate_{T_{n+1}}\|^4 \, \ind\{\|\theta_{m_n}\| < c_0\}  \mid \clF_{m_n+1}\bigr]^{\frac{1}{4}}  .
\end{aligned}
\]
As in the proof of~\Cref{t:bddpar}, 
$\displaystyle
\| \hatodestate_{T_{n+1}}\|^4 \, \ind\{\|\theta_{m_n}\| < c_0\}   \le  \sup_{0\le t\le T+1}  \sup_{\hatodestate}  \| \hatodestate_{t }  \|  $,
   where the inner supremum is over all solutions to the ODE@$\infty$ satisfying $\| \hatodestate_0\| \le c_0$.

 \Cref{t:hat-ode-approx}  and~\eqref{e:c0-contract} give, respectively,
 \[
 \begin{aligned}
  	\Expect\bigl[ \| \hatODEstate_{T_{n+1}}- \hatodestate_{T_{n+1}}\|^4  \mid \clF_{m_n+1} \bigr]^{\frac{1}{4}} 
						&\leq \bdds{t:hat-ode-approx}_n  \exp(\fourth V(\Phi_{m_n+1})),
\\
\Expect\bigl[\| \hatodestate_{T_{n+1}}\|^4 \, \ind\{\|\theta_{m_n}\|\geq c_0\} \mid \clF_{m_n+1} \bigr]^{\frac{1}{4}} (\|\theta_{m_n}\| \vee 1) 
					& \leq \Crelax (\|\theta_{m_n}\| \vee 1).
\end{aligned} 
\]
Combining these bounds gives~\eqref{e:contract-1/4}.
\end{proof}

The ODE approximation in~\eqref{e:hat-ode-approxLP} combined with  the asymptotic stability of the origin for $\ddt \odestate_t = \barf_{\infty}(\odestate_t)$ (see~(A3)) leads to the following ``contraction bound'':
 \begin{lemma}
\label[lemma]{t:contract-4}
Under~{\em (DV3)} and~{\em (A1)--(A4)}, there exists constants $0 < \bdde{t:contract-4} < 1$, $\bdd{t:contract-4} < \infty$,    and a deterministic and vanishing sequence $\{ \bdds{t:contract-4}_n  : n\ge n_g\}$	 such that  for all $n\ge n_g$,
 \begin{equation}
		\label{e:contract-4}
		\Expect \bigl[ \bigl(\| \theta_{m_{n+1}}\| + 1 \bigr)^4 \mid \clF_{m_n+1}\bigr]\leq \Bigl( \bdde{t:contract-4} +  \bdds{t:contract-4}_nv(\Phi_{m_n+1}) \Bigr) \bigl(\|\theta_{m_n}\| + 1 \bigr)^4  + \bdd{t:contract-4}.
\end{equation}
\end{lemma}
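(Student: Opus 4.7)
The plan is to derive \eqref{e:contract-4} directly from \Cref{t:contract-1/4} by raising both sides to the fourth power, converting $\|\theta_{m_{n+1}}\|^4$ to $(\|\theta_{m_{n+1}}\|+1)^4$ via \Cref{t:fourth-epsy}, and absorbing cross-terms through Young's inequality in the form $(u+v)^4 \leq (1+\epsy) u^4 + C_\epsy v^4$. The structure of the target inequality dictates how carefully I isolate the leading $\Crelax^4$ term so that what remains can be made strictly less than one.

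Writing $a_n \eqdef \Crelax + \bdds{t:hat-ode-approx}_n \exp(\fourth V(\Phi_{m_n+1}))$ and raising \eqref{e:contract-1/4} to the fourth power gives
\[
\Expect\bigl[\|\theta_{m_{n+1}}\|^4 \mid \clF_{m_n+1}\bigr] \leq \bigl[a_n(\|\theta_{m_n}\|+1) + \bdd{t:contract-1/4}\bigr]^4.
\]
\Cref{t:fourth-epsy} with $c=1$ and any $\epsy>0$ then converts the left side to the form we want at the cost of an additive constant $b_{1,\epsy}$:
\[
\Expect\bigl[(\|\theta_{m_{n+1}}\|+1)^4 \mid \clF_{m_n+1}\bigr] \leq (1+\epsy)\bigl[a_n(\|\theta_{m_n}\|+1) + \bdd{t:contract-1/4}\bigr]^4 + b_{1,\epsy}.
\]
I then apply Young's inequality twice: first with $u=a_n(\|\theta_{m_n}\|+1)$ and $v=\bdd{t:contract-1/4}$ to peel off the additive constant, yielding a bound involving $(1+\epsy')a_n^4(\|\theta_{m_n}\|+1)^4$; second, inside $a_n^4$ with $u=\Crelax$ and $v=\bdds{t:hat-ode-approx}_n \exp(\fourth V(\Phi_{m_n+1}))$ to isolate $\Crelax^4$ from the random perturbation. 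Crucially, the fourth power converts $\exp(\fourth V(\Phi_{m_n+1}))$ into $\exp(V(\Phi_{m_n+1}))$, matching the form of the right side of \eqref{e:contract-4} precisely.

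Combining these three bounds, the coefficient of $(\|\theta_{m_n}\|+1)^4$ becomes $(1+\epsy)(1+\epsy')(1+\epsy'')\Crelax^4 + (1+\epsy)(1+\epsy')C_{\epsy''}(\bdds{t:hat-ode-approx}_n)^4\exp(V(\Phi_{m_n+1}))$, and the remaining additive constant is bounded by $(1+\epsy)C_{\epsy'}\bdd{t:contract-1/4}^4+b_{1,\epsy}$. Since $\Crelax<1$ by \Cref{t:contraction-ratio}(ii), I choose $\epsy,\epsy',\epsy''>0$ small enough so that $\bdde{t:contract-4} \eqdef (1+\epsy)(1+\epsy')(1+\epsy'')\Crelax^4 < 1$, and set $\bdds{t:contract-4}_n \eqdef (1+\epsy)(1+\epsy')C_{\epsy''}(\bdds{t:hat-ode-approx}_n)^4$, which is deterministic and tends to zero because $\bdds{t:hat-ode-approx}_n\to 0$ by \Cref{t:hat-ode-approx}. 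There is no substantive obstacle here---all of the difficult work has been done upstream in \Cref{t:contract-1/4} and \Cref{t:contraction-ratio}---and the only check is numerical: that the slack introduced by three applications of $(1+\epsy_i)$ can be kept small enough to preserve the strict contraction, which is immediate from $\Crelax<1$ being a fixed constant.
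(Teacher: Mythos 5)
Your proof is correct and follows essentially the same path as the paper's: convert via \Cref{t:fourth-epsy}, raise \Cref{t:contract-1/4} to the fourth power, isolate $\Crelax^4$ from the vanishing perturbation so that the coefficient of $\exp(V(\Phi_{m_n+1}))$ tends to zero, and choose the slack parameters small enough to keep the contraction constant below one. The only difference is a cosmetic one in the isolation step: the paper expands $(\Crelax + \bdds{t:hat-ode-approx}_n\exp(\fourth V(\Phi_{m_n+1})))^4$ binomially and uses $\Crelax<1$ together with $V\geq 0$ to absorb all cross-terms into a single vanishing coefficient of $\exp(V(\Phi_{m_n+1}))$, whereas you reach the same conclusion with the Young-type bound $(u+v)^4\leq(1+\epsy'')u^4+C_{\epsy''}v^4$, paying an extra harmless factor $(1+\epsy'')$ that is absorbed because $\Crelax<1$ is strict.
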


The following simple bound is used twice in the proof:
 \begin{lemma}
\label[lemma]{t:fourth-epsy} 
 $\displaystyle	b_{c,\epsy} \eqdef \sup_x \{ (x+c)^4 - (1+\epsy) x^4 \}< \infty$,  for any   $c, \epsy >0$.  
\end{lemma}

\begin{proof}[Proof of~\Cref{t:contract-4}]
By~\Cref{t:fourth-epsy}, for any $\epsy>0$, we have,
\begin{equation}
		\label{e:contract-1-epsy}
		\Expect[( \|\theta_{m_{n+1}}\| + 1)^4 \mid \clF_{m_n+1}] \leq (1+\epsy)	\Expect[\|\theta_{m_{n+1}}\|^4 \mid \clF_{m_n+1}] + b_{1,\epsy}.
\end{equation}
	It follows from~\eqref{e:contract-1/4} that,
\[
	\Expect \bigl[\| \theta_{m_{n+1}}\|^4 \mid \clF_{m_n+1}\bigr] \leq \Bigl\{
	\bigl( \Crelax +  \bdds{t:hat-ode-approx}_n \exp(\fourth V(\Phi_{m_n+1})) \bigr)(\|\theta_{m_n}\| + 1)  + \bdd{t:contract-1/4} \Bigr\}^4.
\]
By~\Cref{t:fourth-epsy} once more, for any $\epsy_0 >0$, 
\begin{equation}
		\label{e:contract-2-epsy}
	\begin{aligned}
			\Bigl\{
			\bigl( \Crelax +&  \bdds{t:hat-ode-approx}_n \exp(\fourth V(\Phi_{m_n+1})) \bigr) (\|\theta_{m_n}\| + 1)  + \bdd{t:contract-1/4}
			\Bigr\}^4 \\	
			&\leq (1+\epsy_0)\Bigl( \Crelax +  \bdds{t:hat-ode-approx}_n \exp(\fourth V(\Phi_{m_n+1})) \Bigr)^4(\|\theta_{m_n}\| + 1)^4  + b_1,
\end{aligned}
\end{equation}
with $b_1$ a   constant that depends on $\bdd{t:contract-1/4}$ and $\epsy_0$. 
The following bound holds because   $0<\Crelax<1$ and $V(x)\geq 0$ for $x\in\state$,
\begin{equation}
		\label{e:contract-3-epsy}
	\begin{aligned}
\bigl( \Crelax +  \bdds{t:hat-ode-approx}_n \exp(\fourth V(\Phi_{m_n+1})) \bigr)^4 \leq  \Crelax^4 +  \clE_0(n) \exp(V(\Phi_{m_n+1})),
\end{aligned}
\end{equation}
where  $ \clE_0(n) \eqdef [\bdds{t:hat-ode-approx}_n]^4 + 4\bdds{t:hat-ode-approx}_n + 6[\bdds{t:hat-ode-approx}_n]^2+ 4[\bdds{t:hat-ode-approx}_n]^3$  is a   vanishing sequence.  

We arrive at the bounds, 
\[
\begin{aligned}
&	\Expect \bigl[\| \theta_{m_{n+1}}\|^4 \mid \clF_{m_n+1}\bigr] \leq \Bigl\{ (1+\epsy_0)  \Crelax^4 +  (1+\epsy_0)  \clE_0(n) v(\Phi_{m_n+1})\Bigr\}(\|\theta_{m_n}\| + 1)^4 + b_1,
\\
&		\Expect   [(\|\theta_{m_{n+1}}\| + 1)^4 \mid \clF_{m_n+1}] 
		\\
&\quad		     \leq   (1+\epsy)	\Bigl\{ (1+\epsy_0)  \Crelax^4 +  (1+\epsy_0)  \clE_0(n)v(\Phi_{m_n+1})\Bigr\}(\|\theta_{m_n}\| + 1)^4
  + b_\epsy + (1+\epsy)  b_1,
\end{aligned}
\]
where the first inequality is obtained on combining~\eqref{e:contract-2-epsy} and~\eqref{e:contract-3-epsy}, and the second follows from the first and~\eqref{e:contract-1-epsy}.

Since $\epsy$ and $\epsy_0$ can be taken arbitrarily small, we can find $\bdde{t:contract-4}< 1$ and $\bdd{t:contract-4} <\infty$ such that the desired bound holds for all $n\ge n_g$,
\[
	\Expect \bigl[(\| \theta_{m_{n+1}}\| + 1)^4 \mid \clF_{m_n+1}\bigr] \leq \Bigl\{ \bdde{t:contract-4} +  \bdds{t:contract-4}_n \exp(V(\Phi_{m_n+1}))\Bigr\}(\|\theta_{m_n}\| + 1)^4 + \bdd{t:contract-4},
\]
with $ \bdds{t:contract-4}_n= (1+\epsy)(1+\epsy_0) \clE_0(n)$.
\end{proof}

\Cref{t:contract-4}  inspires us to construct a Lyapunov function of the form $L_{m_n} =  \La_{m_n}  + \beta \Lb_{m_n} $, with  $\La_{m_n}  = (\| \theta_{m_n} \| + 1)^4  $ and,
\[
\Lb_{m_n} =   ( \| \theta_{m_n} \| + 1)^4 \Expect[    \exp\bigl( V(\Phi_{m_n +1 }) + \epsy^\circ  W(\Phi_{m_n })  \bigr)     \mid \clF_{m_n }   ]   .
\]
It is assumed throughout that  $0 < \beta < 1$ and  $\epsy^\circ \eqdef \frac{1}{4} - \delta \Trelax > 0$. It follows from the identity in~\eqref{e:block-bound} that for all $n \geq 0$,
\begin{equation}
	\epsy^\circ + 4\delta \sum_{k = m_n+1}^{m_{n+1}}  \alpha_k \leq \frac{1}{4} - \delta \Trelax + \frac{3}{4} + \frac{1}{4}\Trelax \cdot 4\delta \leq 1.
	\label{e:block-length-epsy}
\end{equation}
The desired contraction is as follows:  For 	constants $ \varrho_0 <1$  $b_0<\infty$ and integer $n_0\ge 1$,
\begin{equation}
	\Expect[ L_{m_{n+1}}    \mid \clF_{m_n  }   ]   \le  \varrho_0  L_{m_n}    +b_0    \,,  \quad  n\ge n_0.
\label{e:Lcontraction}
\end{equation}
It will follow that the mean of $L_{m_n}$ is uniformly bounded, and
then the proof of~\Cref{t:BigBounds} is completed by 
invoking~\Cref{t:bounded-hat-l4}~(ii) and~\Cref{t:DV3multBdd}.

\begin{proof}[Proof of~\Cref{t:BigBounds}]
Most of the work involves establishing the bound~\eqref{e:Lcontraction}.

Applying~(DV3) and~\eqref{e:V-bounded-level},  the following simple bound holds,
\begin{equation} 
		\label{e:LaLbComparison}
		\Lb_{m_n}   \le      \exp\bigl( V(\Phi_{m_n }) + b \bigr)   \La_{m_n} \quad a.s.,
\end{equation}
	and hence $\Lb_{m_n}   \le     \exp ( b_V(r) + b  )   \La_{m_n}    $  when $W(\Phi_{m_n })  \le r$,  for any $n$ and $r$.
	
The bound~\eqref{e:contract-4} combined with the definitions gives:
\begin{equation}
	\begin{aligned} 
			\Expect [  \La_{m_{n+1}}   \mid \clF_{m_n }   ]
			& \le 
			\bdde{t:contract-4}  \La_{m_n}  +\bdds{t:contract-4}_n( \| \theta_{m_n} \| + 1)^4   \Expect\bigl[  v(\Phi_{m_n + 1}  )  \mid \clF_{m_n }  \bigr]  + \bdd{t:contract-4}   \\
			& =    \bdde{t:contract-4}  \La_{m_n}  
			+ \bdds{t:contract-4}_n  \exp\bigl( -\epsy^\circ W(\Phi_{m_n } )    \bigr )    \Lb_{m_n} + \bdd{t:contract-4} .
	\end{aligned} 
		\label{e:gem1}
\end{equation}
By definition and the smoothing property of conditional expectation,
\[
	\Expect[ \Lb_{m_{n+1}}    \mid \clF_{m_n  }   ] \\
	=   \Expect[ (1 + \|\theta_{m_{n+1}}\|)^4\exp\bigl(V(\Phi_{m_{n+1}+1}) + \epsy^\circ W(\Phi_{m_{n+1}}) \bigr)    \mid \clF_{m_n  }   ] .
\]
Using~\Cref{t:bounded-hat-l4}~(ii) implies
that $ \Expect[ \Lb_{m_{n+1}}     \mid \clF_{m_n  }   ] $
is bounded above by:
\[
16(\|\theta_{m_n}\|+1)^4\Expect\Bigl[ \exp\bigl( V(\Phi_{m_{n+1}+1}) + \epsy^\circ W(\Phi_{m_{n+1}}) + 4\delta \sum_{k=m_n+1}^{m_{n+1}} \alpha_k W(\Phi_k)  \bigr)    \mid \clF_{m_n  }   \Bigr] .
\]
Recall that by construction $\epsy^\circ + 4\delta \sum_{k=m_n+1}^{m_{n+1}} \alpha_k \leq 1$ for each $n\ge n_g$ from~\eqref{e:block-length-epsy}. 
\Cref{t:DV3multBdd} then implies,
\[
	\begin{aligned}
		  	\Expect[ \Lb_{m_{n+1}}    \mid \clF_{m_n  }   ]  &  \le   16  (\| \theta_{m_n} \| + 1)^4    b_v^2  e^{b}
			\Expect[ \exp\bigl( V(\Phi_{m_n +1 })  \bigr)    \mid \clF_{m_n  }   ] 
			\\
			&  =   16    b_v^2  e^{b}\exp\bigl( -\epsy^\circ W(\Phi_{m_n   })  \bigr)   \Lb_{m_n}
			\\
			  	\Expect[ L_{m_{n+1}}    \mid \clF_{m_n  }   ] & \le  \bdde{t:contract-4}  \La_{m_n}    
		+  \bigl( \bdds{t:contract-4}_n  + 16  \beta       b_v^2  e^{b} \bigr)\exp\bigl( -\epsy^\circ W(\Phi_{m_n   })  \bigr)   \Lb_{m_n}  \! +  \bdd{t:contract-4},
	\end{aligned} 
\]
where the second bound follows from the first and~\eqref{e:gem1}.

To obtain the desired bound~\eqref{e:Lcontraction} we consider two cases.  If the coefficient of $ \Lb_{m_n} $ satisfies,
\[
	\bigl( \bdds{t:contract-4}_n    +  16 \beta  b_v^2  e^{b} \bigr)\exp\bigl( -\epsy^\circ W(\Phi_{m_n   })  \bigr)      \le  \bdde{t:contract-4},
\]
then we obtain the desired bound with $\varrho_0 = \bdde{t:contract-4} $ and $b_0= \bdd{t:contract-4}$. 
	In the contrary case, we   have,
\[
\exp\bigl( \epsy^\circ W(\Phi_{m_n   })  \bigr) <  \bigl( \bdds{t:contract-4}_n    +  16 \beta  b_v^2  e^{b} \bigr) /\bdde{t:contract-4} <  \bigl( \bdds{t:contract-4}_n    +  16  b_v^2  e^{b} \bigr) / \bdde{t:contract-4}.
\]
We apply~\eqref{e:LaLbComparison}, which implies that we can find a constant $b_V^*$ that is independent of $\beta$, such that $  \Lb_{m_n}   \le  b_V^*   \La_{m_n} $, giving (note that $\exp\bigl( -\epsy^\circ W(\Phi_{m_n   })  \bigr) < 1$):
\[
	\Expect[ L_{m_{n+1}}    \mid \clF_{m_n  }   ]  \le  \bdde{t:contract-4} \La_{m_n}    
	+  \bigl(  \bdds{t:contract-4}_n   +  \beta   16      b_v^2  e^{b} \bigr) b_V^* \La_{m_n} + \bdd{t:contract-4} .
\]
Choose $0 < \beta < 1$ so that $  16 \beta  b_v^2  e^{b} b_V^*< 1- \bdde{t:contract-4}$,  and define,
\[
	\varrho_0 =  \bdde{t:contract-4} +  \max_{n\ge n_0}   
\bigl\{  \bdds{t:contract-4}_n  +  16  \beta  b_v^2  e^{b} \bigr\} b_V^* .
\]
We can choose $n_0\ge 1$ large enough so that $\varrho_0<1$.   
Choosing  $b_L=b_0$ gives~\eqref{e:Lcontraction}:
\[
	\Expect[ L_{m_{n+1}}    \mid \clF_{m_n  }   ]  \le  \varrho_0  \La_{m_n}    +b_0 \le  \varrho_0  L_{m_n}    +b_0    \,,  \quad  n\ge n_0 \,, 
\]
which gives $\displaystyle
	\sup_{n\geq 0} \Expect[L_{m_n}] < \infty$.
\Cref{t:bounded-hat-l4}~(ii) combined with \Cref{t:DV3multBdd} finishes the 
proof. 
\end{proof}

\subsection{Asymptotic statistics}
\label{s:AS}


Recall that the FCLT is concerned with the continuous-time process defined 
in~\eqref{e:scerror_norm},   and the CLT  concerns the discrete-time 
stochastic process defined in~\eqref{e:scerror}.   Most of the work here concerns the FCLT, with the CLT obtained as a corollary.  
 
For the FCLT it  is convenient to introduce the notation: 
\begin{equation}
\begin{aligned}
y_k^{(n)} &\eqdef  \tilODEstate_{\SAtime_k}^{(n)}  = \theta_k - \odestate_{\SAtime_k}^{(n)}     \,,
\qquad    z_k^{(n)} \eqdef     \scerror{\SAtime_k}{n} =\frac{y_k^{(n)}}{\sqrt{\alpha_k}}    \,,    \ \ \qquad k\ge n. 
\end{aligned}
\label{e:scaled-error-def}
\end{equation}
\begin{subequations}%
Analysis is based on familiar comparisons:  
For $k\ge n$,  
\begin{align}
\odestate^{(n)}_{\SAtime_{k+1}}  & = \odestate^{(n)}_{\SAtime_k} + \alpha_{k+1}[\barf(\odestate^{(n)}_{\SAtime_k}) - \clE_k^{\clD}]
\label{e:resj-disc}
\\
\theta_{k+1} & = \theta_k + \alpha_{k+1}[\barf(\theta_k) + \Delta_{k+1}],
\label{e:SA-theta-noisedecomp}
\end{align}
where $ \clE_k^{\clD}$  denotes  the error in replacing   the integral $\int_{\SAtime_k}^{\SAtime_{k+1}} \barf(\odestate^{(n)}_t)\, dt$  by $\alpha_{k+1}  \barf(\odestate^{(n)}_{\SAtime_k})  $.
 \label{e:resj-disc-all}
 \end{subequations}

We require the following corollary to~\Cref{t:BigBounds}:
\begin{corollary}
	\label[corollary]{t:noise-bound-l4}
	Suppose the conditions of~\Cref{t:BigBounds} hold. Then, there exists a finite constant $\bdd{t:noise-bound-l4}$ such that for all $n\ge n_g$,
\begin{equation}
		\label{e:noise-bound-l4}
		\| \MD_{n+1}\|_4 \leq \bdd{t:noise-bound-l4}\,, \qquad \|\clT_n\|_4\leq \bdd{t:noise-bound-l4} \,, \qquad \| \Oops_{n+1}\|_4 \leq  \bdd{t:noise-bound-l4}.
\end{equation}
\end{corollary}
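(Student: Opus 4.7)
The plan is to combine the pathwise Lipschitz bounds from Proposition~\ref{t:lip-noise}(ii) on the three noise terms with the uniform fourth-moment estimate
\[
	\sup_k \Expect\bigl[(\|\theta_k\|+1)^4 \exp(V(\Phi_{k+1}))\bigr] < \infty
\]
supplied by Theorem~\ref{t:BigBounds} and the conditional moment bounds of Proposition~\ref{t:v-uni-w}(ii). Each of the three bounds in \eqref{e:noise-bound-l4} is handled separately, with the discretization error $\clE^{\haf}_{n+2}$ requiring the most care.

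For $\clT_{n+1}$, use that $r\mapsto (1+r)^4 e^{-r}$ is bounded on $[0,\infty)$ to obtain a constant $c_1$ with $(1+V(x))^4 \le c_1 e^{V(x)}$; then \eqref{e:lip-tele} gives $\|\clT_{n+1}\|^4 \le c_1 b_{\haf}^4 e^{V(\Phi_{n+1})}(1 + \|\theta_n\|)^4$, and the bound on $\|\clT_{n+1}\|_4$ follows directly from Theorem~\ref{t:BigBounds}. For $\MD_{n+2}$, conditioning \eqref{e:lip-md} on $\clF_{n+1}$ and using Jensen's inequality yields
\[
	\Expect[\|\MD_{n+2}\|^4 \mid \clF_{n+1}] \le c_2 (1 + \|\theta_n\|)^4 \bigl(1 + \Expect[V(\Phi_{n+2})^4 \mid \clF_{n+1}] \bigr);
\]
Proposition~\ref{t:v-uni-w}(ii) applied with $\kappa = 4$ bounds the inner conditional expectation by $b_{4,V}\, e^{V(\Phi_{n+1})}$, and taking outer expectation finishes this case via Theorem~\ref{t:BigBounds}.

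For $\clE^{\haf}_{n+2}$, starting from \eqref{e:lip-epsy} and again conditioning on $\clF_{n+1}$, Proposition~\ref{t:v-uni-w}(ii) gives $\Expect[(1+V(\Phi_{n+2}))^4 \mid \clF_{n+1}] \le c_3 e^{V(\Phi_{n+1})}$, so
\[
	\Expect[\|\clE^{\haf}_{n+2}\|^4] \le c_3 \alpha_{n+1}^4 b_{\haf}^4 \Expect\bigl[L(\Phi_{n+1})^4 e^{V(\Phi_{n+1})}(1 + \|\theta_n\|)^4\bigr].
\]
To tame the surplus $L^4$ factor I would recycle the estimate $L^4 \le c_4 e^{V/2}$ derived inside the proof of Proposition~\ref{t:hat-noise-bounds}, reducing the remaining expectation to $c_4 \Expect[e^{(3/2)V(\Phi_{n+1})}(1 + \|\theta_n\|)^4]$, which is handled by a Cauchy--Schwarz split together with the multiplicative drift bound of Proposition~\ref{t:DV3multBdd} (applied with a small coefficient on $W$ so that the $e^{(3/2)V}$ piece factors against an $e^V$ weight and a uniformly integrable residual). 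After extracting $\alpha_{n+1}^4$ and taking fourth roots, one obtains the bound $\|\clE^{\haf}_{n+2}\|_4 \le \alpha_{n+1} \bdd{t:noise-bound-l4}$.

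The main obstacle is this last step: the extra $L^4$ factor in the discretization error is not directly absorbed by Theorem~\ref{t:BigBounds}. The cleanest route relies on the interplay between $V$, $W$ and $L$ imposed in (A2) together with the multiplicative drift inequality of Proposition~\ref{t:DV3multBdd}, which are precisely the tools used to derive the intermediate bounds in Proposition~\ref{t:hat-noise-bounds}; borrowing those estimates essentially reduces Corollary~\ref{t:noise-bound-l4} to a consequence of Theorem~\ref{t:BigBounds}. Taking $\bdd{t:noise-bound-l4}$ to be the maximum of the three constants produced completes the proof.
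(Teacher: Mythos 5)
Your handling of $\clT_{n+1}$ and $\MD_{n+2}$ is correct and follows the natural route through \Cref{t:lip-noise}(ii), \Cref{t:v-uni-w}(ii) and \Cref{t:BigBounds}. The gap is in the case of $\clE^{\haf}_{n+2}$, precisely at the step you flagged yourself: bounding $\Expect[(1+V(\Phi_{n+2}))^4\mid\clF_{n+1}]$ directly by $c_3\,e^{V(\Phi_{n+1})}$ spends the entire exponential weight that \Cref{t:BigBounds} can absorb, so once you multiply in $L^4\le c_4 e^{V/2}$ you are left with $\Expect\bigl[e^{(3/2)V(\Phi_{n+1})}(1+\|\theta_n\|)^4\bigr]$, which is out of reach. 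A Cauchy--Schwarz split would demand an eighth moment on $\theta$, which the theory does not supply, and \Cref{t:DV3multBdd} gives bounds on $\Expect[\exp(V(\Phi_n)+\sum_k\delta_k W(\Phi_k))]$ rather than on a weighted $e^{(3/2)V}$; neither tool tames the excess factor, so the last step of your plan does not close.

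The repair, which is exactly what the proof of \Cref{t:hat-noise-bounds} does, is to spend only half the exponential budget on each factor. By Jensen,
\[
\Expect\bigl[(1+V(\Phi_{n+2}))^4 \mid \clF_{n+1}\bigr] \le \Expect\bigl[(1+V(\Phi_{n+2}))^8 \mid \clF_{n+1}\bigr]^{1/2} \le b_0^{1/2}\exp\bigl(\tfrac12 V(\Phi_{n+1})\bigr),
\]
while $L(\Phi_{n+1})^4 \le b_0^{1/2}\exp\bigl(\tfrac12 V(\Phi_{n+1})\bigr)$. Both estimates rest on $(1+V)^8$ and $L^8$ lying in $L_\infty^v$ with $v=e^V$; the latter follows from $L\le\delta_L W$ together with the pointwise bound $W\le V+b$, which (DV3) forces because $V\ge0$ makes its left-hand side at least one. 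The product of these two halves is $b_0\,e^{V(\Phi_{n+1})}$, giving
\[
\Expect\bigl[\|\clE^{\haf}_{n+2}\|^4\bigr] \le \alpha_{n+1}^4\, b_{\haf}^4\, b_0\, \Expect\bigl[(1+\|\theta_n\|)^4\, e^{V(\Phi_{n+1})}\bigr],
\]
which \Cref{t:BigBounds} bounds uniformly in $n$. Equivalently, the corollary drops out of \Cref{t:hat-noise-bounds} by unconditioning: since $\MD_{k+2}=c_n\haMD_{k+2}$ (and similarly for the other two terms) and $c_n^4\le(1+\|\theta_{m_n}\|)^4$, a final appeal to \Cref{t:BigBounds} finishes the job.
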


\wham{Overview of proofs of the CLT and FCLT}

 Define $w_n = \min\{k: \SAtime_k \geq \SAtime_n + T\}$.  If $n=m_{n_0}$ for some integer $n_0$, then   $w_n = m_{n+1}$ follows from~\eqref{e:mofn}. 
We obtain an update for the scaled error sequence $\{z_k^{(n)}  :  n\le k\le w_n \}$ in order to  establish tightness
of the family of stochastic processes: $\{\scerrorpull{t}{n} :    t\in[0,T] \}_{n=1}^\infty $  
defined in~\eqref{e:scerror_norm} 
 and also identify the limit. 
 
 The proof of~\Cref{t:bound-taylor-reminder} is found in~\Cref{s:AS}.

\begin{lemma}
\label[lemma]{t:bound-taylor-reminder}
Under~{\em (A2)--(A5a)} we have  the approximation,
\begin{equation}
\label{e:SA-y} 
y_{k+1}^{(n)} = y_k^{(n)} + \alpha_{k+1}[ \barA^{(n)}_k y_k^{(n)} + \clE_k^T + \clE_k^{\clD} + \Delta_{k+1} ] \,,\quad k\ge n,
 \end{equation} 
 where $\barA^{(n)}_k= A(\odestate^{(n)}_{\SAtime_k} )$, and
  the error terms are interpreted and bounded as follows:

\whamem{(i)}
    $\clE_k^T  $ is the error in the Taylor expansion:     
\begin{equation}
\clE_k^T  \eqdef  \barf(\theta_k)-
\barf(\odestate^{(n)}_{\SAtime_k})  -\barA^{(n)}_k   [  \theta_k -\odestate^{(n)}_{\SAtime_k}   ]  \,,  \qquad \|\clE_k^T \|= O(\|y_k^{(n)}\|^2 \wedge \|y_k^{(n)}\|)
\label{e:clET}
\end{equation}
 
\whamem{(ii)}
   $\| \clE_k^{\clD}\|  = O(\alpha_k \| \theta_k\| )$  
\emph{(defined below; cf.~\eqref{e:resj-disc-all})}.

\whamem{(iii)}
$\Delta_{k+1} = \MD_{k+1} - \clT_{k+1} + \clT_k - \alpha_{k+1}\Oops_{k+1}$, in which $\{\MD_k\}$ is a martingale difference sequence.  The remaining terms  
$\clT_k =   \haf(\theta_k, \Phi_{k+1}) $,  and $\alpha_{k+1}\Oops_{k+1} =   \haf(\theta_k,  \Phi_{k+2}) - \haf(\theta_{k+1}, \Phi_{k+2}) $ satisfy,
with $b_{\haf}$ a finite constant,
\[
\| \clT_k \| \le b_{\haf}  \bigl(1+V(\Phi_{k} ) \bigr)   \|   \theta_k \| \,,
	   \qquad \|\alpha_{k+1}\Oops_{k+1} \| \le b_{\haf}   \bigl(1+V(\Phi_{k+2} ) \bigr)    \| \theta_{k+1} - \theta_k \|.
\]
\end{lemma}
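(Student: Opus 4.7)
The plan is to derive the update~\eqref{e:SA-y} by straightforward subtraction of the two recursions in \eqref{e:resj-disc-all}, and then handle each of the error terms separately using tools already at hand. Writing $y_k^{(n)}=\theta_k-\odestate^{(n)}_{\SAtime_k}$ and subtracting \eqref{e:resj-disc} from \eqref{e:SA-theta-noisedecomp},
\[
y_{k+1}^{(n)}=y_k^{(n)}+\alpha_{k+1}\bigl[\,\barf(\theta_k)-\barf(\odestate^{(n)}_{\SAtime_k})+\clE_k^D+\Delta_{k+1}\,\bigr].
\]
Adding and subtracting $A_k y_k^{(n)}$ and invoking the definition \eqref{e:clET} of $\clE_k^T$ yields the claimed identity. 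So the lemma reduces to justifying the three bounds (i)--(iii).

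For (i), I would use the integral mean-value representation
\[
\barf(\theta_k)-\barf(\odestate^{(n)}_{\SAtime_k})=\int_0^1 A\bigl(\odestate^{(n)}_{\SAtime_k}+ty_k^{(n)}\bigr)\,y_k^{(n)}\,dt,
\]
so that $\clE_k^T=\int_0^1\bigl[A(\odestate^{(n)}_{\SAtime_k}+ty_k^{(n)})-A(\odestate^{(n)}_{\SAtime_k})\bigr]y_k^{(n)}\,dt$. Under (A5a), $A$ is uniformly Lipschitz, hence $\|\clE_k^T\|\le\tfrac{1}{2}L_A\|y_k^{(n)}\|^2$. At the same time, under (A5a) the Jacobian $A$ is uniformly bounded by some $\|A\|_\infty$, and $\barf$ is Lipschitz with constant $\barL$, so by the triangle inequality $\|\clE_k^T\|\le(\barL+\|A\|_\infty)\|y_k^{(n)}\|$. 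Combining these two bounds gives $\clE_k^T=O(\|y_k^{(n)}\|^2\wedge\|y_k^{(n)}\|)$.

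For (ii), write the discretization error as an integral:
\[
\alpha_{k+1}\clE_k^D=\alpha_{k+1}\barf(\odestate^{(n)}_{\SAtime_k})-\int_{\SAtime_k}^{\SAtime_{k+1}}\barf(\odestate^{(n)}_t)\,dt=\int_{\SAtime_k}^{\SAtime_{k+1}}\bigl[\barf(\odestate^{(n)}_{\SAtime_k})-\barf(\odestate^{(n)}_t)\bigr]dt.
\]
Using Lipschitz continuity of $\barf$ and the fact that $\|\odestate^{(n)}_t-\odestate^{(n)}_{\SAtime_k}\|\le\alpha_{k+1}\sup_{s\in[\SAtime_k,\SAtime_{k+1}]}\|\barf(\odestate^{(n)}_s)\|$, together with the linear growth bound $\|\barf(\odestate)\|\le\barL\|\odestate\|+\|\barf(0)\|$, this gives $\|\clE_k^D\|\le C\alpha_{k+1}(1+\|\odestate^{(n)}_{\SAtime_k}\|)$ for some constant $C$; finally, $\|\odestate^{(n)}_{\SAtime_k}\|\le\|\theta_k\|+\|y_k^{(n)}\|$, and in the regimes of interest (where $y_k^{(n)}$ is small) one obtains the stated $O(\alpha_k\|\theta_k\|)$ bound after absorbing constants and using $\alpha_k/\alpha_{k+1}\to1$ from \Cref{t:step-size}.

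Finally, (iii) is simply a restatement of \Cref{t:noise-decomp} combined with the Lipschitz bounds on $\haf$ from \Cref{t:bounds-H}\,(ii): \eqref{e:growth-H} applied at $(\theta_k,\Phi_{k+1})$ gives $\|\clT_{k+1}\|\le b_{\haf}(1+V(\Phi_{k+1}))(1+\|\theta_k\|)$, and \eqref{e:lip-H} applied to the pair $(\theta_{k+1},\theta_k)$ at $x=\Phi_{k+2}$ gives the stated bound on $\clE^{\haf}_{k+2}$. The martingale-difference property of $\{\MD_k\}$ is by construction in \eqref{e:noise-decomp}. None of these steps presents a real obstacle; the only mild subtlety is ensuring that the bound in (ii) is stated in a form directly usable in the downstream tightness arguments, which is why I phrase the derivation in terms of the Lipschitz-based integral estimate rather than invoking the higher-order bound in \eqref{e:discrete-error-O}.
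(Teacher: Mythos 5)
Your proof follows essentially the same route as the paper's: derive \eqref{e:SA-y} by subtracting \eqref{e:resj-disc} from \eqref{e:SA-theta-noisedecomp}, and then handle the three error terms via Taylor/Lipschitz arguments and \Cref{t:noise-decomp} together with \Cref{t:bounds-H}\,(ii). The integral mean-value argument you give for (i) is the standard way to make precise the paper's assertion that the $O(\|y_k^{(n)}\|^2\wedge\|y_k^{(n)}\|)$ bound ``follows from (A5a).'' Two remarks worth recording. First, for (iii) the bound you actually obtain from \eqref{e:growth-H}, namely $\|\clT_{k+1}\|\le b_{\haf}(1+V(\Phi_{k+1}))(1+\|\theta_k\|)$, is what the cited route produces; the lemma as printed has $(1+V(\Phi_k))\|\theta_k\|$, which is evidently a typographical slip (wrong time index on $\Phi$ and a missing $+1$), and your version is the correct one and the one that feeds into the downstream $L_4$ bound $\|\clT_{j+1}\|_4\le b_0$ in \eqref{e:bounds-O}. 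Second, in (ii) your clean estimate is $\|\clE_k^D\|=O\bigl(\alpha_{k+1}(1+\|\odestate^{(n)}_{\SAtime_k}\|)\bigr)$; the appeal to ``regimes of interest'' to massage this into the printed form $O(\alpha_k\|\theta_k\|)$ is not a proof and is also unnecessary, since the bound in terms of $\odestate^{(n)}_{\SAtime_k}$, combined with \Cref{t:BigBounds} and the stability bound \eqref{e:EAS} controlling $\|\odestate^{(n)}_{\SAtime_k}\|$ in terms of $\|\theta_n\|$, already delivers the only consequence used later, $\|\clE_j^D\|_4\le b_0\alpha_j$.
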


  \begin{subequations}

  The following companion to~\Cref{t:step-size} is required.   The proof is omitted.

Recall that 
  $\diffalpha_k$ is defined in (A1) and $ n_g$ is defined in  \eqref{e:ng}.
\begin{lemma}
\label[lemma]{t:sqrtRatioBdd}
The following bounds hold for each $k\ge 1$,
\begin{equation}
\begin{aligned}
\sqrt{\frac{\alpha_k}{\alpha_{k+1}}} & = 1 + \frac{\diffalpha_k}{2} \alpha_k  + O(\alpha_k^2)  \,,
\ \  
| \alpha_k - \alpha_{k+1} |   =   
\diffalpha_k \alpha_k^2  + O(\alpha_k^3),
\end{aligned} 
\label{e:sqrtalphas}
\end{equation} 
Moreover, with $\bdd{t:sqrtRatioBdd}=  e^{  \bardiffalpha T/2 }/T $   and every $n$ satisfying  $n\ge n_g$,
\begin{align}
\prod_{i=l}^{k-1}(1+ \diffalpha_i \alpha_i)^\half &\leq  \exp(\half \sum_{i=n}^{w_n} \diffalpha_i \alpha_i)   && 
\label{e:simpleExpBdd}
	\\
 \prod_{i=l}^{k-1} \frac{\sqrt{\alpha_{i}}}{\sqrt{\alpha_{i+1}}}    & \le 1+ \bdd{t:sqrtRatioBdd}   [\SAtime_k - \SAtime_l  ],  \quad 
 			&&  n \leq   l<  k \leq w_n. \qquad \qed
\label{e:sqrtProdExpBdd}
 \end{align}
\end{lemma}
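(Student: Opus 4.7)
The plan is to derive all four bounds from the defining identity $\gamma_n = \alpha_{n+1}^{-1} - \alpha_n^{-1}$ in (A1), which we rewrite in the two equivalent forms
\[
\frac{\alpha_k}{\alpha_{k+1}} = 1 + \gamma_k \alpha_k, \qquad \alpha_k - \alpha_{k+1} = \gamma_k \alpha_k \alpha_{k+1}.
\]
Since $\gamma_n \to \gamma$ is finite by (A1), the quantity $\bar\gamma \eqdef \sup_n |\gamma_n|$ is finite, and $\gamma_k \alpha_k = O(\alpha_k)$ uniformly in $k$. The first identity in \eqref{e:sqrtalphas} then follows from the Taylor expansion $\sqrt{1+x} = 1 + x/2 + O(x^2)$ applied at $x = \gamma_k \alpha_k$, which is valid for $k$ large enough that $\gamma_k\alpha_k$ lies in a fixed small neighborhood of $0$. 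For the second identity, I would substitute $\alpha_{k+1} = \alpha_k/(1+\gamma_k\alpha_k)$ into $\alpha_k - \alpha_{k+1} = \gamma_k \alpha_k \alpha_{k+1}$ and expand $(1+\gamma_k\alpha_k)^{-1} = 1 - \gamma_k\alpha_k + O(\alpha_k^2)$ to isolate the leading term $\gamma \alpha_k^2$ (using $\gamma_k \to \gamma$).

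For \eqref{e:simpleExpBdd}, the bound $1 + x \le e^x$ applied at $x = \gamma_i \alpha_i$ gives $(1+\gamma_i\alpha_i)^{1/2} \le \exp(\gamma_i\alpha_i/2)$ whenever $1+\gamma_i\alpha_i \ge 0$, which is ensured for $i \ge n_g$ by choosing $n_g$ large enough (recall $\bar\alpha \to 0$). Taking the product over $l \le i \le k-1$ and extending the sum to the full block $[n,w_n]$ (the added terms are non-negative) yields the claim.

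For \eqref{e:sqrtProdExpBdd}, the key observation is that the product telescopes:
\[
\prod_{i=l}^{k-1} \sqrt{\frac{\alpha_i}{\alpha_{i+1}}} = \sqrt{\frac{\alpha_l}{\alpha_k}} = \prod_{i=l}^{k-1}(1+\gamma_i\alpha_i)^{1/2}.
\]
Applying \eqref{e:simpleExpBdd} and the uniform bound $|\gamma_i| \le \bar\gamma$ gives
\[
\sqrt{\frac{\alpha_l}{\alpha_k}} \le \exp\Bigl(\tfrac{\bar\gamma}{2}\sum_{i=l}^{k-1}\alpha_i\Bigr) \le \exp\bigl(\tfrac{\bar\gamma}{2}(\SAtime_k - \SAtime_l + \alpha_l)\bigr),
\]
after rewriting $\sum_{i=l}^{k-1}\alpha_i$ in terms of $\SAtime_k - \SAtime_l = \sum_{i=l+1}^k \alpha_i$ up to the boundary term $\alpha_l - \alpha_k \le \alpha_l \le \bar\alpha$. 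Since $n \le l < k \le w_n$, the exponent lies in a bounded interval $[0, M]$ with $M$ depending only on $T$ and $\bar\gamma$, so the elementary inequality $e^x \le 1 + x e^M$ for $x \in [0,M]$ produces the affine bound $1 + b[\SAtime_k - \SAtime_l]$ for a suitable constant $b$.

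The main obstacle, though minor, is book-keeping: making sure all the $O(\cdot)$ expansions and the inequality $1+\gamma_i\alpha_i \ge 0$ hold uniformly for $k \ge n_g$, and carefully tracking the off-by-one shift between $\sum_{i=l}^{k-1}\alpha_i$ and $\SAtime_k - \SAtime_l$. Both are handled by absorbing a single boundary term $\alpha_l \le \bar\alpha \le 1$ into the constant $b$.
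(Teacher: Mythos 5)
Your proposal is correct and follows essentially the same route as the paper: the Taylor expansion of $\sqrt{1+\gamma_k\alpha_k}$ for the first display, the identity $\alpha_k-\alpha_{k+1}=\gamma_k\alpha_k\alpha_{k+1}$ for the second, $1+x\le e^x$ for \eqref{e:simpleExpBdd}, and a chord bound on $e^x$ over the compact exponent range for \eqref{e:sqrtProdExpBdd}. The telescoping identity $\prod_{i=l}^{k-1}\sqrt{\alpha_i/\alpha_{i+1}}=\sqrt{\alpha_l/\alpha_k}$ is a pleasant way to connect the two products, but it is cosmetic; the paper instead goes $\sqrt{1+\gamma_i\alpha_i}\le 1+\tfrac{\gamma_i}{2}\alpha_i\le e^{\gamma_i\alpha_i/2}$ term by term, arriving at the same exponential bound, and then uses the chord estimate $e^x\le 1+\tfrac{e^{\barx}-1}{\barx}x$ on $[0,\barx]$ with $\barx$ of order $T$. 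Your accounting of the boundary term $\alpha_l-\alpha_k$ between $\sum_{i=l}^{k-1}\alpha_i$ and $\SAtime_k-\SAtime_l$ is, if anything, slightly more careful than the paper's. One small point you should make explicit: the step ``extend the sum to $[n,w_n]$ (the added terms are non-negative)'' tacitly assumes $\gamma_i\ge 0$ for $i\ge n_g$, which is what makes both \eqref{e:simpleExpBdd} and the final chord bound usable; the paper has the same tacit assumption (it takes $\bardiffalpha>0$), and it holds for the step-sizes $\alpha_n=1/n^\rho$ in (A5b), but it is worth stating.
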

\end{subequations}

\begin{subequations}

Dividing each side of~\eqref{e:SA-y} by $\sqrt{ \alpha_{k+1} }$ gives,
\[
z_{k+1}^{(n)}    = \sqrt{\tfrac{\alpha_k}{ \alpha_{k+1} }}z_k^{(n)} + \alpha_{k+1}[ \sqrt{\tfrac{\alpha_k}{ \alpha_{k+1} }} \barA^{(n)}_k z_k^{(n)}] + \sqrt{\alpha_{k+1}}[ \clE_k^T + \clE_k^{\clD} + \Delta_{k+1} ].
\]
This combined with~\Cref{t:sqrtRatioBdd} provides the approximation:
\begin{lemma}
\label[lemma]{t:MainZapprox}
For $k\ge n$,  
\begin{align}
	z_{k+1}^{(n)}  
	& =   z_k^{(n)} + \alpha_{k+1}[  \tfrac{\diffalpha}{2} I+  \barA^{(n)}_k ] z_k^{(n) }  + \sqrt{\alpha_{k+1}} \MD_{k+1}  
	+  \sqrt{\alpha_{k+1}}\bdds{e:SA-z}_k, 
	\label{e:SA-z}
	\\[1em]
	& \quad \text{where} \ \  \bdds{e:SA-z}_k  \eqdef  
	- \clT_{k+1} + \clT_k  + \clE_k^T + \clE_k^{\clD} -  \alpha_{k+1}\Oops_{k+1}    + \sqrt{\alpha_{k+1}} \clE_k^\alpha,
	\label{e:SA-z-errors}
\end{align}
 with   $  \clE_k^\alpha$,   satisfying $ \|  \clE_k^\alpha \| = O(\alpha_k)  \|z_k^{(n)}\|$, is the error due to the approximation of $\tfrac{\alpha_k}{ \alpha_{k+1} }$:
 \[
  \clE_k^\alpha  =  \tfrac{1}{\alpha_{k+1}} \Bigl(  \sqrt{\tfrac{\alpha_k}{ \alpha_{k+1} }}  -\{ 1 + \tfrac{\diffalpha}{2} \alpha_k  \}   \Bigr) z_k^{(n)}
  +    \Bigl(  \sqrt{\tfrac{\alpha_k}{ \alpha_{k+1} }}     -1  \Bigr)\barA^{(n)}_k z_k^{(n)}.
 \]
 \end{lemma}

  \label{e:SA-z+z-errors} 
\end{subequations}


The following summarizes the bounds obtained in this subsection.   
It will be seen that these bounds combined with standard arguments complete the proof of~\Cref{t:FCLT}.

\begin{proposition}
\label[proposition]{t:tight}
The following hold under the assumptions of~\Cref{t:FCLT}, for any $T>0$:
There exists a constant $\bdd{t:tight}<\infty$ such that:
\[
\begin{aligned}
\Expect\bigl[\|z_k^{(n)} - z_l^{(n)}\|^4 \bigr] & \leq \bdd{t:tight} |\SAtime_k - \SAtime_{l}|^2,
\\
\Expect\bigl[\|z_k^{(n)}\|^4 \bigr] &\leq \bdd{t:tight}, &&  \text{ for all }  n\ge  n_g \text{ and } n\leq l < k\leq w_n.
 \end{aligned}
\]
\end{proposition}
 
We next explain why~(A5b) combined with the conclusions of~\Cref{t:FCLT} 
and~\Cref{t:tight} imply the CLT.  
Let $\{m_n\}$ be any increasing sequence satisfying the assumptions in the convergence theory:   $\SAtime_{m_{n+1} } \ge \SAtime_{m_n} +T$ for each $n$,   and $\lim_{n\to\infty}  [ \SAtime_{m_{n+1} } - \SAtime_{m_n}] =T$.     The FCLT implies the following limit for any continuous and bounded function $g\colon\Re^d\to\Re$:
\begin{equation}
\lim_{n\to\infty}  \Expect[ g( z^{(m_n)}_{m_{n+1}}) ]   =  \Expect[g(X_T)];
\label{e:preFCLTgivesCLT}
\end{equation}
here $\sclim_T$ is Gaussian: The solution to~\eqref{e:FCLT} with initial condition $X_0=0$.  
\Cref{t:tight}  implies that we can go beyond bounded functions.  The following uniform bound holds,
\[
\max_{m_n\le k\le m_{n+1}}  \|z_k^{(m_n)} \|_4  \le b_z,
\]
where $b_z$ grows exponentially with $T$, but independent of $n$.    It follows that~\eqref{e:preFCLTgivesCLT} holds for any function  satisfying $g=o(\upomega)$, with $\upomega$ the quartic function introduced in~\Cref{t:CLT}.
The extension to unbounded functions is via uniform integrability.

The challenge then is to replace  $z^{(m_n)}_{m_{n+1}}$ by $z_n \eqdef (\theta_n - \theta^*)/  \sqrt{\alpha_n} $ in this limit,  which amounts to bounding bias:
\[
z^{(m_n)}_{m_{n+1}}  =  \frac{1}{\sqrt{\alpha_{m_{n+1}}}} \{  \theta_{m_{n+1}}  - \odestate^{(m_n)}_{\SAtime_{m_{n+1}}} \}   
=
z_{m_{n+1}}  +  \frac{1}{\sqrt{\alpha_{ m_{n+1} } }}   \{   \theta^*    - \odestate^{(m_n)}_{\SAtime_{m_{n+1}}} \}.
\] 
It is here that we require  the stability condition~\eqref{e:EAS}, which provides the     bound,
\[
\begin{aligned} 
\|   \theta^*    - \odestate^{(m_n)}_{\SAtime_{m_{n+1}}} \|
&= 
\|   \theta^* -   \phi( \SAtime_{m_{n+1}} \!\! - \SAtime_{m_n} ;   \theta_{m_n}) \|     \le  \sqrt{\alpha_{ m_{n} } }   \bexp e^{ -\rhoexp T } \| z_{m_n} \|,
\end{aligned} 
\]
and hence from the prior identity,
\begin{equation}
\begin{aligned} 
\| z_{m_{n+1}}   \|  &  \le  \|  z^{(m_n)}_{m_{n+1}} \|    +  \xi_{m_n}(T)   \|  z_{m_n}  \| 
\\
\| z_{m_{n+1}}   -  z^{(m_n)}_{m_{n+1}} \|   &  \le     \xi_{m_n}(T)   \|  z_{m_n}  \|   \,,
\qquad  
	 \text{where}
\quad
\xi_{m_n}(T) =   \sqrt{ \tfrac{\alpha_{ m_n} }{  \alpha_{ m_{n+1} }}} \bexp e^{ -\rhoexp T }.
\end{aligned} 
\label{e:c-contract}
\end{equation}
It will be seen that $\xi_{m_n}(T) $ is vanishing with $T$, uniformly in $n$.    

It is convenient here to flip the window backwards in time:   
For each $k\ge w_1$ denote $w^-_k = \max\{ n  :  w_n \le k \}$,   and  
let,
\begin{equation}
\xi^-_k(T) =  \Bigl( \sqrt{  \alpha_{ w_k^-}  /  \alpha_k }  \Bigr)  \bexp e^{ -\rhoexp T } .
\label{e:xi-k}
\end{equation} 
\Cref{t:EAScontraction} tells us  that $\{\xi^-_k(T)  \}$ vanishes with $T$,  uniformly in $k$, 
from which we obtain the following:

\begin{lemma}
\label[lemma]{t:zCLTbdd} 
Suppose that the conclusions of~\Cref{t:FCLT} and~\Cref{t:tight} hold.   If in addition~(A5b) is satisfied, then:
\whamem{(i)}
 $\displaystyle \lim_{k\to\infty}  \Expect[ g( z^{(w^-_k)}_k ) ]   =  \Expect[g(X_T)]$
  for any continuous function satisfying $g=o( \upomega)$.

\whamem{(ii)}
  The error $L_4$ norm    $  \|  z_k   \|_4 $  is uniformly bounded in $k$, and moreover,
\[
  \limsup_{k\to\infty}  \|  z_k  - z^{(w^-_k)}_k\|_4   \le \bdds{t:zCLTbdd}_T \to 0\,,\quad \text{as $T\to\infty$.}
\]
\end{lemma}

\begin{proof}[Proof of~\Cref{t:CLT}]
The proof is presented here with the understanding 
that~\Cref{t:FCLT}, \Cref{t:tight} and~\Cref{t:zCLTbdd} have been established.  
Full details of the proofs are found below. 
 
We first establish~\eqref{e:CLT} for bounded and continuous functions, and for this it is sufficient to restrict to characteristic functions.     That is, it is sufficient to establish  the family of limits,
\[
\lim_{k\to\infty} \Expect[ \phi_v( z_k ) ]  =\Expect[\phi_v(\sclim)]  \,, \qquad v\in\Re^d,
\]
where $\phi_v(z) = \exp(j v^\transpose z)$ for $z\in\Re^d$ and $j=\sqrt{-1}$~\cite[Th.~29.4]{bil95}.
Letting $L_{\phi_v}$ denote a Lipschitz constant for $\phi_v$,~\Cref{t:zCLTbdd}~(ii) and Jensen's inequality give,
\[
 \limsup_{k\to\infty}  | \Expect[\phi_v(z_k)]  -  \Expect[ \phi_v( z^{(w^-_k)}_k ) ]  | 
 \le L_{\phi_v}  \limsup_{k\to\infty}   \Expect [ \| z_k -  z^{(w^-_k)}_k \| ]  
 \le L_{\phi_v} \{    \bdds{t:zCLTbdd}_T \}^{1/4}.
\]
Combining this with~\Cref{t:zCLTbdd}~(i) gives,
 \[
\limsup_{k\to\infty}  | \Expect[\phi_v(z_k)]  -  \Expect[\phi_v(X)] |  
	 \le  L_{\phi_v} \{    \bdds{t:zCLTbdd}_T \}^{1/4}   +  |  \Expect[\phi_v(X_T)] -  \Expect[\phi_v(X)] |.
\]
The right-hand side vanishes as $T\to\infty$ for any $v$, from which we conclude that~\eqref{e:CLT} holds for bounded and continuous functions.    

\Cref{t:tight}
and~\Cref{t:zCLTbdd}   imply that $\{ g(z_k)\}$ are uniformly integrable when  $g=o(\upomega)$,  which justifies extension of~\eqref{e:CLT} to  unbounded and 
continuous functions satisfying this bound.  
 \end{proof}

\wham{Tightness and the FCLT}   

We begin with a proof of tightness of the distributions of the two families of stochastic processes $\{ \scerrorpull{t}{n}\,, t\in[0,T]\}_{n=1}^\infty$ and $\{ \odestatepull{t}{n} \,, t\in[0,T]\}_{n=1}^\infty$, along with $L_p$ bounds required for refinements of the FCLT.

Using a well-known criterion for tightness of probability measure over $C([0, T];\Re^d)$ in~\cite[Th.~12.3]{bil68}, we establish a form   of uniform continuity: There exists a constant $\bdd{t:tight}<\infty$ such that for all $n\ge n_g$ and $n \leq   l<  k \leq w_n$,
\begin{subequations}
\label{e:conti-zvt}
\begin{align}
\Expect\bigl[\|\odestate_{\SAtime_k}^{(n)} - \odestate_{\SAtime_{l}}^{(n)}\|^4 \bigr] \leq \bdd{t:tight} |\SAtime_k - \SAtime_{l}|^2,  \label{e:conti-vt} \\
\Expect\bigl[\| \scerrorpull{\SAtime_k}{n} - \scerrorpull{\SAtime_l}{n}\|^4 \bigr] \leq \bdd{t:tight} |\SAtime_k - \SAtime_{l}|^2.
 \label{e:conti-zt} 
\end{align}
\end{subequations}
The bound~\eqref{e:conti-vt} follows directly from~\Cref{t:BigBounds}.   
The bound~\eqref{e:conti-zt} is the major part of~\Cref{t:tight} that is the main outcome of this subsection, whose proof requires the proof 
of~\Cref{t:bound-taylor-reminder} and another  simple lemma.

 \begin{proof}[Proof of~\Cref{t:bound-taylor-reminder}]
Part~(ii)  follows from Lipschitz continuity of $\barf$,   
and part~(iii) follows from~\Cref{t:noise-decomp} and~\eqref{e:bdds+lip-H} 
of~\Cref{t:bounds-H}.

For part~(i)  we begin with the definition~\eqref{e:clET}, written in the form,
\[
\clE_k^T = R( \odestate^{(n)}_{\SAtime_k}  ,  \theta_k )  \,,  \qquad R(\theta^\circ, \theta) = \barf(\theta) - \barf(\theta^\circ) - A(\theta^\circ)(\theta - \theta^\circ)  \quad \text{ for $\theta,\theta^\circ\in\Re^d$,}
\]
and
with $A(\theta^\circ) =  \partial \barf\,  ( \theta^\circ )$. 
The bound $\|R(\theta^\circ, \theta) \| = O(\|\theta^\circ- \theta\|^2)$ follows from~(A5a).  
Also, $R(\theta^\circ, \theta)$  is Lipschitz continuous in $\theta$  with  Lipschitz constant $2\barL$, giving,
\[
	\|R(\theta^\circ, \theta) \| \leq b_0 \min(\|\theta^\circ- \theta\|, \|\theta^\circ- \theta\|^2).
\] 
The desired bound in~(i) then follows from the definition $\clE_k^T = R( \odestate^{(n)}_{\SAtime_k}  ,  \theta_k ) $ and the definition   $y_k^{(n)} = \theta_k - \odestate_{\SAtime_k}^{(n)}   $.
\end{proof}
  
Recalling the notation $\barA^{(n)}_k= A(\odestate^{(n)}_{\SAtime_k} )$ introduced before   \eqref{e:SA-y},   consider the collection of $d\times d$ matrices $\{ \STy_{ l, k}  :  0\le l\le k\}$ defined by induction:    for each $l$,  
\[
\STy_{ l, l} = I \,, \ \ \textit{and} \quad
  \STy_{ l, k} =    [I+\alpha_{k}\barA^{(n)}_{k-1}]   \STy_{ l, k-1}\,, \quad k>l 
\]
We also consider the scaled matrices $\{  \ScaledST_{ l, k} \eqdef  \sqrt{\alpha_l/\alpha_k}  \STy_{ l, k}  :  0\le l\le k \}$.
These are written informally as  
 \begin{equation} 
\begin{aligned}
\STy_{ l, k}   &  \eqdef 	\prod_{i=l}^{k-1}[I+\alpha_{i+1}\barA^{(n)}_i] 
  \,,  \ \  &&   0\le  l < k\,,    \quad 
						&&	\STy_{ l, l}  \eqdef 	I  \, , 
\\
\ScaledST_{ l, k}  &  \eqdef 	\prod_{i=l}^{k-1}[I+\alpha_{i+1}\barA^{(n)}_i]  \frac{\sqrt{\alpha_{i}}}{\sqrt{\alpha_{i+1}}} \,,  \ \ 
&&  0\le  l < k\,,    \quad 
						&&	\ScaledST_{ l, l}  \eqdef 	I  \,  .
\end{aligned}
\label{e:STy}
\end{equation} 
With $n>0$ and $l \geq n$ fixed, iterating~\eqref{e:SA-y} for $k > l$,  gives,
\begin{equation}
y_k^{(n)} =   \STy_{l,k} y_l^{(n)}  + \sum_{j=l+1}^{k}  \alpha_{j}  
		\STy_{j,k}  [\clE_{j-1}^T + \clE_{j-1}^{\clD} +  \Delta_{j}].
\label{e:yAlmostLinear}
\end{equation}
We consider next a similar recursion for $\{ z_{k}^{(n)}  \}$ obtained by iterating~\eqref{e:SA-z}.

\begin{lemma}
	\label[lemma]{t:z-formula}
	For each $n$, and $n\leq l < k \leq w_n$,
\begin{equation}
		\label{e:zn-acc-sum}
	\begin{aligned}
			z_k^{(n)} = \ScaledST_{l, k}  z_{l}^{(n)}  &+  \sum_{j=l+1}^{k}\sqrt{\alpha_{j}} 
					 \ScaledST_{j, k} [\clE_{j-1}^T  + \clE_{j-1}^{\clD} +   \Delta_{j}] \, .
\end{aligned}
\end{equation} 
Moreover, there exists a deterministic constant $b_{\ScaledST}<\infty$ such that for all $n>0$,
\whamem{(i)} $\displaystyle
	\max_{n< l < k < w_n} \{  \| \STy_{l, k} \| +  \| \ScaledST_{l, k} \| \} \leq b_{\ScaledST} $,   \ \ 
	\textbf{\emph{(ii)}}
  $\|\ScaledST_{l, k} -I\| \leq b_\ScaledST (\SAtime_k - \SAtime_{l})$.
\end{lemma}

\begin{proof}
Dividing both sides of~\eqref{e:yAlmostLinear}
 by $\sqrt{\alpha_k}$ gives \eqref{e:zn-acc-sum} since 
 $
 \ScaledST_{j, k}    =    \sqrt{\alpha_l/\alpha_k}    \STy_{j, k}  
 $.

\begin{subequations}

Since $\|A(\theta)\|\leq \barL$ for all $\theta\in\Re^d$, we apply~\Cref{t:sqrtRatioBdd} to obtain the pair of bounds:
\begin{align}
\|   \STy_{j, k}  \|
		& \leq \prod_{i=j+1}^{k}(1+\alpha_{i}\barL) \leq \exp(\barL T),
   \\
   	\prod_{i= l}^{k-1}\frac{\sqrt{\alpha_{i}}}{\sqrt{\alpha_{i+1}}}         &  \le 1 + b  [\SAtime_k - \SAtime_l  ]  \le  1+ b  T 
            \,, \qquad n\leq l < k \leq w_n.  
        \label{e:prod-sqrt-alpha-rats}
\end{align} 
 Therefore, $\STy_{j, k} ,\ScaledST_{j, k} $ are uniformly bounded for all $n\leq j <k < w_n$,  and all $n$.
This establishes~(i),  and  writing 
$\STy_{l, k} -I  =    [I+\alpha_{k}\barA^{(n)}_{k-1}]  [  \STy_{ l, k-1}  - I ]  +   \alpha_{k}\barA^{(n)}_{k-1}$ brings us 
 one step towards~(ii):   For a fixed constant $b_y$,
\begin{equation}
\|\STy_{l, k} -I\| \leq b_y (\SAtime_k - \SAtime_{l})  \,,\quad k > l.
\label{e:StyBdd}
\end{equation}

\label{e:RecallLemmaProdBdds}
\end{subequations}

The remainder of the proof of~(ii) uses the identity,  
\[
\begin{aligned}
		\ScaledST_{l, k}   
		&= [ \ScaledST_{l, k}  -\STy_{l,k} ] + \STy_{l,k}
		= \bigl[\prod_{i=l}^{k-1} \frac{\sqrt{\alpha_{i}}}{\sqrt{\alpha_{i+1}}}-1\bigr]
		\STy_{l,k}+ \STy_{l,k}  \, .
\end{aligned}
\]
Applying~\eqref{e:prod-sqrt-alpha-rats} and \eqref{e:StyBdd} establishes~(ii):         
\[
\begin{aligned}
\| \ScaledST_{l, k}   - I \|  
&\le  \| \bigl[\prod_{i=l}^{k-1} \frac{\sqrt{\alpha_{i}}}{\sqrt{\alpha_{i+1}}}-1\bigr]  \|  
		\|\STy_{l,k}\| + \| I-\STy_{l,k}\|
		\\
	&\le b   [\SAtime_k - \SAtime_l  ]  \exp(\barL T)   +   \| I-\STy_{l,k}\| 
	\\
	&  \le   b_\ScaledST [\SAtime_k - \SAtime_l  ]   \,,\qquad \textit{with} \ \ b_\ScaledST \eqdef b \exp(\barL T)   + b_y 
\end{aligned}
\]
\end{proof}

Next we establish bound for each disturbance term on the right-hand side of~\eqref{e:zn-acc-sum}. 
The bound~(i) in~\Cref{t:tight-noise-l4} is far from tight.   Once we obtain $L_4$ bounds on $\{z_k^{(n)}  :  n\le k\le w_n \}$,  we can expect that the left-hand side of~(i) will vanish as $n\to\infty$.   Our immediate goal   is only to establish   $L_4$ bounds.

\begin{lemma}
\label[lemma]{t:tight-noise-l4}
The following hold under the assumptions of~\Cref{t:FCLT}:   There exist a constant $\bdd{t:tight-noise-l4}<\infty$ 
and a vanishing sequence $\{  \bdds{t:tight-noise-l4}_n : n\ge 1\}$
such that for all $n>0$ and $n\leq l < k \leq w_n$:
\whamem{(i)} 
$\displaystyle  \bigl\| \sum_{j=l+1}^{k}  \sqrt{\alpha_{j}} \,  \ScaledST_{j,k} \,  \clE_{j-1}^T \bigr\|_4 \leq \bdd{t:tight-noise-l4} \sum_{j=l}^{k-1} \alpha_{j} \|z_j^{(n)}\|_4$.

\whamem{(ii)} $ \displaystyle  \bigl\|  \sum_{j=l+1}^{k}  \sqrt{\alpha_{j}} \,  \ScaledST_{j,k} \,  [\clE_{j-1}^{\clD}  - \alpha_{j}\Oops_{j}] \bigr\|_4 \leq \bdds{t:tight-noise-l4}_n \bigl(\SAtime_k - \SAtime_l \bigr)^\half$.

\whamem{(iii)} $\displaystyle  \bigl\|
		\sum_{j=l+1}^{k}  \sqrt{\alpha_{j}} \,  \ScaledST_{j,k} \,   [\clT_{j}-\clT_{j-1}]
		\bigr\|_4 \leq    \min\bigl \{  \bdds{t:tight-noise-l4}_n  ,   \bdd{t:tight-noise-l4}  \bigl(\SAtime_k - \SAtime_l \bigr)^\half \bigr\}$.

\whamem{(iv)} $\displaystyle \bigl\|
\sum_{j=l+1}^{k}  \sqrt{\alpha_{j}} \,  \ScaledST_{j,k} \,  
 \MD_{j} \bigr\|_4 \leq \bdd{t:tight-noise-l4} \bigl(\SAtime_k - \SAtime_l \bigr)^\half $.
 \end{lemma}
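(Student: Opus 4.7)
The four bounds follow a common template: factor out the uniform operator-norm bound $\|\Upsilon_{\cdot,\cdot}^{(n)}\| \le b_\Upsilon$ from Lemma~\ref{t:z-formula}(i), then combine the decomposition of $\Delta_{j+1}$ from Lemma~\ref{t:noise-decomp} with the uniform $L_4$ bounds on $\{\MD_{j+2}\}$, $\{\clT_{j+1}\}$, $\{\clE^{\haf}_{j+2}\}$ provided by Corollary~\ref{t:noise-bound-l4}, and the $L_4$-boundedness of $\{\theta_k\}$ from Theorem~\ref{t:BigBounds}. For (i), apply the linear bound $\|\clE_j^T\| \le b\|y_j^{(n)}\|$ from Lemma~\ref{t:bound-taylor-reminder}(i) to get $\sqrt{\alpha_{j+1}}\|\clE_j^T\|_4 \le b\sqrt{\alpha_j\alpha_{j+1}}\|z_j^{(n)}\|_4 \le C\alpha_{j+1}\|z_j^{(n)}\|_4$ via Lemma~\ref{t:step-size}(i); the triangle inequality then finishes (i). For (ii), Lemma~\ref{t:bound-taylor-reminder}(ii), Theorem~\ref{t:BigBounds}, and Corollary~\ref{t:noise-bound-l4} yield $\|\clE_j^D\|_4 + \|\clE^{\haf}_{j+2}\|_4 \le C\alpha_{j+1}$, so the sum is bounded by $C\sum_{j=l}^{k-1}\alpha_{j+1}^{3/2} \le C\sqrt{\bar\alpha}(\SAtime_k - \SAtime_l) \le C\sqrt{\bar\alpha(T+\bar\alpha)}(\SAtime_k - \SAtime_l)^{1/2}$, with vanishing prefactor in $n$.

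For (iii), apply summation by parts with $a_j \eqdef \sqrt{\alpha_{j+1}}\Upsilon_{j+1,k-1}^{(n)}$:
\[
\sum_{j=l}^{k-1} a_j(\clT_{j+2} - \clT_{j+1}) = a_{k-1}\clT_{k+1} - a_l\clT_{l+1} + \sum_{j=l+1}^{k-1}(a_{j-1}-a_j)\clT_{j+1}.
\]
Using the recursion $\Upsilon_{j,k-1}^{(n)} = [I+\alpha_{j+1}A(\odestate^{(n)}_{\SAtime_j})]\sqrt{\alpha_j/\alpha_{j+1}}\,\Upsilon_{j+1,k-1}^{(n)}$, a direct computation gives $a_{j-1} - a_j = -\alpha_j\sqrt{\alpha_{j+1}}[\gamma_j I + A(\odestate^{(n)}_{\SAtime_j})]\Upsilon_{j+1,k-1}^{(n)}$, so $\|a_{j-1}-a_j\| \le C\alpha_j\sqrt{\alpha_{j+1}}$. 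The middle sum is then bounded in $L_4$ by $C\sqrt{\bar\alpha}(\SAtime_k-\SAtime_l)$ via Corollary~\ref{t:noise-bound-l4}, and the boundary contributes $O(\sqrt{\alpha_k}+\sqrt{\alpha_{l+1}})$; both expressions are vanishing in $n$ (yielding the $\bdds{t:tight-noise-l4}_n$ bound), and since $\sqrt{\alpha_k} \le \sqrt{\SAtime_k-\SAtime_l}$ for $k>l$, both also admit the alternative $(\SAtime_k-\SAtime_l)^{1/2}$ bound with a uniform constant, justifying the minimum.

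For (iv), observe that $\Upsilon_{j+1,k-1}^{(n)}$ depends only on the deterministic trajectory $\{\odestate^{(n)}_{\SAtime_i}\}_{i > j}$, which is a function of $\theta_n \in \clF_n$; hence the partial sums of $\sqrt{\alpha_{j+1}}\Upsilon_{j+1,k-1}^{(n)}\MD_{j+2}$ (in the index $j$) form a martingale. Burkholder's inequality (Lemma~\ref{t:bound-md}) with $\delta_j = \sqrt{\alpha_{j+1}}b_\Upsilon$, combined with $\sup_j\|\MD_{j+2}\|_4 \le \bdd{t:noise-bound-l4}$ from Corollary~\ref{t:noise-bound-l4}, gives an $L_4^4$-bound of order $(\sum_j \alpha_{j+1})^2 \le (\SAtime_k-\SAtime_l+\bar\alpha)^2$, from which the desired $(\SAtime_k-\SAtime_l)^{1/2}$ bound follows on taking fourth roots.

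The main obstacle is (iii): a naive triangle-inequality bound on $\sum \sqrt{\alpha_{j+1}}[\|\clT_{j+2}\|_4 + \|\clT_{j+1}\|_4]$ does not exploit any cancellation between consecutive terms and yields only an $O(\SAtime_k - \SAtime_l)$ bound with non-vanishing constant. Summation by parts is essential, and the algebraic identity for $a_{j-1}-a_j$ must be computed carefully so that the leading $O(\sqrt{\alpha_{j+1}})$ contributions cancel exactly, leaving the $O(\alpha_j\sqrt{\alpha_{j+1}})$ factor that makes both the vanishing-in-$n$ and $(\SAtime_k-\SAtime_l)^{1/2}$ estimates available simultaneously.
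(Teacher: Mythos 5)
Your proposal is correct, and parts (i)--(iii) follow the paper's route, with two minor remarks: the sign in your summation-by-parts difference should read $a_{j-1}-a_j = +\alpha_j\sqrt{\alpha_{j+1}}\bigl[\gamma_j I + A(\odestate^{(n)}_{\SAtime_j})\bigr]\Upsilon_{j+1,k-1}^{(n)}$ (harmless once norms are taken), and the $\bar\alpha$ in your vanishing estimates for (ii)--(iii) should be read as the tail supremum $\sup_{j\ge n}\alpha_{j+1}$ so that the prefactor actually goes to zero; your $L_4$-triangle-inequality bound on the middle sum in (iii) slightly streamlines the paper's use of $(a+b+c)^4\le 16\{a^4+b^4+c^4\}$ followed by Jensen. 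The genuine departure is part (iv). The paper defines $\Gamma_{l,k}^{(n)} = \sum_{j=l}^{k}\sqrt{\alpha_{j+1}}\Upsilon_{j+1,k}^{(n)}\MD_{j+2}$, derives a one-step recursion in the terminal index $k$ that strips off the $\Upsilon$ factors, applies discrete Gronwall, and only then invokes Burkholder on the bare sum $\sum_j\sqrt{\alpha_{j+1}}\MD_{j+2}$. You instead observe that for $j\ge n$ the matrix $\Upsilon_{j+1,k-1}^{(n)}$ is a deterministic function of $\theta_n$, hence $\clF_n$-measurable and in particular $\clF_{j+1}$-measurable, so that $\{\Upsilon_{j+1,k-1}^{(n)}\MD_{j+2}\}_j$ is itself a martingale-difference array with $\sup_j\Expect[\|\Upsilon_{j+1,k-1}^{(n)}\MD_{j+2}\|^4]\le b_\Upsilon^4\sup_j\Expect[\|\MD_{j+2}\|^4]$; Burkholder (\Cref{t:bound-md} with $X_j=\Upsilon_{j+1,k-1}^{(n)}\MD_{j+2}$ and scalar weights $\delta_j=\sqrt{\alpha_{j+1}}$, rather than $\delta_j=\sqrt{\alpha_{j+1}}b_\Upsilon$ as you wrote) then gives the $(\SAtime_k-\SAtime_l)^{1/2}$ bound directly and eliminates the Gronwall step. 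This is shorter and cleaner; the trade-off is that it leans explicitly on the adaptedness of the re-started ODE trajectory $\{\odestate^{(n)}\}$ to $\clF_n$, which the paper's recursive argument never needs to invoke.
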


The following bounds are required in~(i)--(iii): For a finite constant $\bdd{e:bounds-O}$:
\begin{equation}
\begin{aligned}
\| \clE_j^T \|_4, &\le  \bdd{e:bounds-O} \sqrt{\alpha_j}  \|z_j^{(n)}\|_4    & \qquad		\|\clE_j^{\clD}\|_4  &\leq \alpha_j \bdd{e:bounds-O},
		\\
			\| \Oops_{j}\|_4& \leq \bdd{e:bounds-O},  \quad & \|\clT_{j}\|_4& \leq \bdd{e:bounds-O}.
\end{aligned} 
\label{e:bounds-O}
\end{equation}
The first bound follows from $\clE_j^T = O(\|y_j^{(n)}\|^2 \wedge \|y_j^{(n)}\|)  \le  O(\sqrt{\alpha_j} \|z_j^{(n)}\|)$.
The remaining bounds follow from~\Cref{t:bound-taylor-reminder} combined 
with~\Cref{t:BigBounds} and~\Cref{t:noise-bound-l4}.

\begin{proof}[Proof of~\Cref{t:tight-noise-l4}]
Applying \Cref{t:z-formula}   and \eqref{e:bounds-O},
\[
	\bigl\| \sum_{j=l+1}^{k}\sqrt{\alpha_{j}}\ScaledST_{j, k} \clE_{j-1}^T \bigr\|
		 \leq \bdd{e:bounds-O}b_{\ScaledST}  \sum_{j=l+1}^{k}
\sqrt{\alpha_{j}   \alpha_{j-1}}\|z_{j-1}^{(n)}\|.
\]
The bound in~(i)  then follows.    The proof of (ii) is similar.  
 	
	Applying summation by parts to the objective in~(iii) gives,
\[
\begin{aligned}
&\sum_{j=l+1}^{k}  \sqrt{\alpha_{j}}  \ScaledST_{j, k} [\clT_{j}-\clT_{j-1}] 
\\
	&\	 = \sqrt{\alpha_k} \clT_k - \sqrt{\alpha_{l+1}} \ScaledST_{l+1, k}  \clT_{l} 
			+ \sum_{j=l+1}^{k}\sqrt{\alpha_{j}}\alpha_{j-1}\Bigl\{
		\diffalpha_{j-1} I +A(\odestate^{(n)}_{\SAtime_{j-1}})\Bigr\}\ScaledST_{j, k} \clT_{j},
\end{aligned}
\]
which leads to the bound,
\begin{align*}
&\Bigl\| \sum_{j=l+1}^{k}    \sqrt{\alpha_j} \ScaledST_{j, k} [\clT_{j}-\clT_{j-1}] \Bigr\|\\
&\leq
\sqrt{  \alpha_k} \|\clT_k\| 
	+ \sqrt{\alpha_{l+1}} \|\ScaledST_{l+1, k}  
	\clT_{l}\|  
	+ \sum_{j=l+1}^{k}  \sqrt{ \alpha_j } \alpha_{j-1} \Bigl\|\bigl\{
	\diffalpha_j I +A(\odestate^{(n)}_{\SAtime_j})\bigr\}
	\ScaledST_{j, k} \clT_{j}\Bigr\| .
\end{align*}
Using~\eqref{e:bounds-O} and the inequality $(a+b+c)^4\leq 16 \{ a^4 + b^4+c^4\}$, we may increase the constant $\bdd{e:bounds-O}$   to obtain,
\begin{align*}
&\Expect\Bigl[\Bigl\|     \sum_{j=l+1}^{k}     
\sqrt{\alpha_{j}}\ScaledST_{j, k}   
[\clT_{j}- \clT_{j-1}] \Bigr\|^4\Bigr]\\
&\leq
 \bdd{e:bounds-O}\alpha_k^2 \Expect[\|\clT_k\|^4] + \bdd{e:bounds-O}\alpha_{l+1}^2 \Expect[\|\clT_{l}\|^4] 	+\bdd{e:bounds-O} \Expect\Bigr[ \Bigr( \sum_{j=l+1}^{k}\sqrt{\alpha_{j+1}}\alpha_j \bigl\| \clT_{j}\bigr\|  \Bigr)^4 \Bigr].
\end{align*}
The first two terms on the right-hand side are bounded as follows:
\begin{equation*} 
		\alpha_k^2 \Expect[\|\clT_k\|^4] + \alpha_{l+1}^2 \Expect[\| \clT_{l}\|^4]  \leq \bdd{e:bounds-O}^4 \sum_{j=l+1}^{k} \alpha_{j+1}^2 \leq \bdd{e:bounds-O}^4 \bigl(\sum_{j=l+1}^{k} \alpha_{j+1}\bigr)^2.
\end{equation*} 
The right-hand side vanishes with $n$, as it is bounded by a constant times    $(\SAtime_k - \SAtime_l )^2$.
Next,
\[
\begin{aligned}
		\Expect\Bigr[ \Bigr( \sum_{j=l+1}^{k}\sqrt{\alpha_{j+1}}\alpha_j \bigl\|\clT_{j}\bigr\| \Bigr)^4 \Bigr]
		& =  \bigl(\sum_{j=l+1}^{k}\alpha_j\bigr)^4 \Expect\Bigr[\Bigr(\frac{1}{\sum_{j=l+1}^{k}\alpha_j}  \sum_{j=l+1}^{k}\sqrt{\alpha_{j+1}}\alpha_j  \|\clT_{j}\| \Bigr)^4 \Bigr] \\
		&  \leq  \bigl(\sum_{j=l+1}^{k}\alpha_j\bigr)^3 \sum_{j=l+1}^{k}\alpha_{j+1}^2\alpha_j \Expect\bigl[\|\clT_{j}\|^4\bigr] \\
		&  \leq  T^3 \bdd{e:bounds-O}^4\sum_{j=l+1}^{k}\alpha_{j+1}^2\alpha_j   \le   \bigl(\SAtime_k - \SAtime_l \bigr)^2\clE_n,
\end{aligned}
\]
where the first inequality follows by Jensen's inequality,   and the final inequality holds with $\displaystyle \clE_n \eqdef  T^3 \bdd{e:bounds-O}^4 \max \{ \alpha_{j+1}^2/ \alpha_j   :  j\ge n \}$, which vanishes as $n\to\infty$.
  Combining these bounds   gives~(iii). 
  
For each $n$, $l \geq n$ define $\Gamma_{l,l}^{(n)} = 0$
 and   $ 
	\Gamma_{l,k}^{(n)} \eqdef \sum_{j=l+1}^k\sqrt{\alpha_{j}}\ScaledST_{j, k} \MD_{j}$
	 for $k > l$.   
We have the recursive representation,
\[
\begin{aligned}
		\Gamma_{l,k}^{(n)}
		& =  \sqrt{ \tfrac{\alpha_{k-1}}{\alpha_{k}} } [I+\alpha_k\barA^{(n)}_{k-1}]\Gamma_{l,k-1}^{(n)}  + \sqrt{\alpha_k} \MD_k \\
		& =\Gamma_{l,k-1}^{(n)} +  \bigl[( \sqrt{\tfrac{\alpha_{k-1}}{\alpha_{k}} } - 1)I+\sqrt{\alpha_{k-1}}\sqrt{\alpha_k}\barA^{(n)}_{k-1}\bigr]\Gamma_{l,k-1}^{(n)}  + \sqrt{\alpha_k} \MD_k \,,
\end{aligned}
\]
and summing each side then gives,
\[
\begin{aligned}
		\Gamma_{l,k}^{(n)} 
		 = \sum_{j=l+1}^{k}\bigl[ (  \sqrt{ \tfrac{\alpha_{j-1}}{\alpha_{j} }} - 1)I+\sqrt{\alpha_j\alpha_{j-1}}A(\odestate^{(n)}_{\SAtime_j})\bigr]\Gamma_{l,j}^{(n)} 
			+  \sum_{j=l+1}^{k} \sqrt{\alpha_{j}} \MD_{j} .
\end{aligned}
\]
Applying~\Cref{t:sqrtRatioBdd}
once more we obtain, with a possibly larger constant $\bdd{e:bounds-O}<\infty$,
\[
	\bigl\| ( \sqrt{\tfrac{\alpha_{j-1}}{\alpha_{j}} } - 1)I+\sqrt{\alpha_j}\sqrt{\alpha_{j+1}}A(\odestate^{(n)}_{\SAtime_j})\bigr\| \leq \bdd{e:bounds-O}\alpha_{j}  \,, \qquad j>0.
\]
Then, by the triangle inequality,
\[
\begin{aligned}
		\| \Gamma_{l,k}^{(n)} \|_4  
		&\leq \bdd{e:bounds-O} \sum_{j=l+1}^{k} \alpha_{j} \| \Gamma_{l,j}^{(n)}\|_4 
			+ 
			b_{l,k}\,,  \qquad b_{l,k}  \eqdef 
			 \bigl\|	\sum_{j=l+1}^{k} \sqrt{\alpha_{j}} \MD_{j} \bigr\|_4 .
\end{aligned}
\]
Denote   $\barb_{l,k} = \max_{ l < j \le k} b_{l,j}$. 
 The discrete Gronwall's inequality then gives, 
\[
\| \Gamma_{l,k}^{(n)} \|_4  \leq 
\exp(\bdd{e:bounds-O}\sum_{j=l+1}^{k}\alpha_{j}) \barb_{l,k}    \le  \exp(\bdd{e:bounds-O} T)  \barb_{l,k}  \,,
\quad n\leq l < k\leq w_n \,,
\]
and we obtain via \Cref{t:bound-md},
\begin{align*}
\barb_{l,k} ^4\le 
\Expect\bigl[ \max_{l \leq j \leq k} \bigl\| \sum_{i = l}^{j} 
\sqrt{\alpha_{i}} \MD_{i} \bigl\|^4  \bigr] 
		 & \leq \bdd{t:bound-md} 
\max_{j} \Expect[\|\MD_{j}\|^4 ]\bigl(\sum_{i = l+1}^{k} 
	\alpha_{i}\bigr)^2
	  \leq \bdd{t:bound-md} \max_{j} \Expect[\|\MD_{j}\|^4 ] \bigl(\SAtime_k - \SAtime_l \bigr)^2.
\end{align*}
This and~\Cref{t:noise-bound-l4}  establishes
$ \barb_{l,k}  \le  \bdd{t:noise-bound-l4}   \bdd{t:bound-md}^{1/4}  \bigl(\SAtime_k - \SAtime_l \bigr)^\half $, and concludes the proof of~(iv). 
 \end{proof}
 
\begin{proof}[Proof of~\Cref{t:tight}]
Applying the triangle inequality to~\eqref{e:zn-acc-sum} in~\Cref{t:z-formula},

\[
	\begin{aligned}
		\|z_k^{(n)}\|_4  \le   &  \big\|   \sum_{j=n+1}^{k}\sqrt{\alpha_{j}}  \ScaledST_{j, k} \clE_{j-1}^T  \|_4   
		+
		  \big\|   \sum_{j=n+1}^{k}\sqrt{\alpha_{j}}  \ScaledST_{j, k} \MD_{j}^T  \|_4   
		\\
	&	+  \big\|   \sum_{j=n+1}^{k}\sqrt{\alpha_{j}}  
					 \ScaledST_{j, k} [  \clE_{j-1}^{\clD}  -\alpha_j \Oops_j -    \clT_{j-1}  +   \clT_{j}  ]   \big\| 
					  \,, \qquad n<k \leq w_n \, .
\end{aligned}
\]
Applying~\Cref{t:tight-noise-l4} to the above inequality implies that for $n$ sufficiently large,
\[
\|z_k^{(n)}\|_4  \leq   \bdd{t:tight-noise-l4} \sum_{j=n}^{k-1} \alpha_{j} \|z_j^{(n)}\|_4 
	+   3\bdd{t:tight-noise-l4}[\SAtime_k - \SAtime_n]^\half \leq \bdd{t:tight-noise-l4} \sum_{j=n}^{k} \alpha_{j+1} \|z_j^{(n)}\|_4 
	+  3\bdd{t:tight-noise-l4} T^\half .
\]
The discrete Gronwall's inequality gives: 
\begin{equation}
		\label{e:z-bounded-l4}
		\sup_n \sup_{n\leq k\leq w_n} \|z_k^{(n)}\|_4 <\infty.
\end{equation}
An application of the triangle inequality to~\eqref{e:zn-acc-sum}, for arbitrary $n\leq l < k \leq w_n$, gives,
\[
\begin{aligned}
		\| z_k^{(n)}  - z_{l}^{(n)}\|_4
		  \leq    \| & \ScaledST_{l, k}  -I\|       \|z_{l}^{(n)}\| _4
 +  \Bigl\| \sum_{j=l+1}^{k}\sqrt{\alpha_{j}}\ScaledST_{j, k} \clE_{j-1}^T \Bigr\|_4 + \Bigl\| \sum_{j=l+1}^{k}\sqrt{\alpha_{j}}\ScaledST_{j, k} \MD_{j} \Bigr\|_4 \\
		&  + \Bigl\|  \sum_{j=l+1}^{k}\sqrt{\alpha_{j}}\ScaledST_{j, k} [\clE_{j-1}^{\clD}  - \alpha_{j}\Oops_{j} - \clT_{j} + \clT_{j-1}] \Bigr\|_4.
\end{aligned}
\]
The proof is completed on combining this with~\eqref{e:z-bounded-l4},~\Cref{t:z-formula}~(ii) and~\Cref{t:tight-noise-l4}.
\end{proof}

Having established tightness of the sequence of stochastic processes $\{  \scerrorpull{\varble}{n}   :   n\ge 1\}$, the next step in the proof of the FCLT is to characterize any sub-sequential limit.  The following variant of~\Cref{t:tight-noise-l4} will be used in this step.

\begin{lemma}
\label[lemma]{t:tight-noise-l2}
The following hold under the assumptions of~\Cref{t:FCLT}:   
There exists a vanishing sequence $\{  \bdds{t:tight-noise-l2}_n : n\ge 1\}$
such that for all $n>0$ and $n\leq l < k \leq w_n$,
\whamem{(i)}
 $\displaystyle  
\Expect\Bigl[  
	\max_{l \le \ell < k}
\bigl\| \sum_{j=l}^{\ell}
\sqrt{\alpha_{j+1}}   \clE_j^T \bigr\|^2 \Bigr] \leq \bdds{t:tight-noise-l2}_n$.
\quad
 \textbf{\emph{(ii)}} $ \displaystyle 
\Expect\Bigl[  
	\max_{l \le \ell < k}
\bigl\| \sum_{j=l}^{\ell}
\sqrt{\alpha_{j+1}}  [\clE_j^{\clD}  - \alpha_{j+1}\Oops_{j+1}]  \bigr\|^2 \Bigr]  
			 \leq \bdds{t:tight-noise-l2}_n  $.

\whamem{(iii)} $ \displaystyle 
\Expect\Bigl[  
	\max_{l \le \ell < k}
\bigl\| \sum_{j=l}^{\ell}
\sqrt{\alpha_{j+1}}  [\clT_{j+1}-\clT_{j}] \bigr\|^2 \Bigr]  
			 \leq \bdds{t:tight-noise-l2}_n  $.
 \end{lemma}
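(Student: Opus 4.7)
The plan is to sharpen Lemma~\ref{t:tight-noise-l4} by replacing the $(\SAtime_k - \SAtime_l)^{1/2}$ dependence with a coefficient that vanishes uniformly over the block $[n, w_n]$. The key observation is that every summand on the left-hand side carries an extra $\sqrt{\alpha_{j+1}}$ factor, and throughout the block $\alpha_{j+1}\le\alpha_n\to 0$. Combined with the uniform $L^4$ bounds provided by Proposition~\ref{t:tight}, Theorem~\ref{t:BigBounds}, and Corollary~\ref{t:noise-bound-l4}, this systematically produces a prefactor of $\sqrt{\alpha_n}$ (or better) after summation.

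For parts (i) and (ii) I would pass the maximum through the sum using the triangle inequality, $\max_{l\le\ell<k}\|\sum_{j=l}^{\ell} X_j\|_2 \le \sum_{j=l}^{k-1} \|X_j\|_2$. For (i), Lemma~\ref{t:bound-taylor-reminder}(i) gives $\|\clE_j^T\| \le b_0 \|y_j^{(n)}\|^2 = b_0 \alpha_j \|z_j^{(n)}\|^2$, and Proposition~\ref{t:tight} supplies a uniform $L^4$ bound on $z_j^{(n)}$, so $\|\sqrt{\alpha_{j+1}}\clE_j^T\|_2 = O(\alpha_j^{3/2})$. The sum is bounded by $O(\sqrt{\alpha_n}\,T)$, and squaring gives the required $L^1$ bound of order $\alpha_n T^2$. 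Part (ii) follows from an identical calculation using $\|\clE_j^D\|_4 = O(\alpha_j)$ (from Lemma~\ref{t:bound-taylor-reminder}(ii) combined with Theorem~\ref{t:BigBounds}) and $\|\clE^{\haf}_{j+2}\|_4 \le \alpha_{j+1}\bdd{t:noise-bound-l4}$ from Corollary~\ref{t:noise-bound-l4}.

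Part (iii) is the substantive step, since $\|\clT_{j+1}\|_2$ is uniformly bounded but not small, so the direct triangle-inequality argument fails. I would instead apply summation by parts:
\[
Y_\ell \eqdef \sum_{j=l}^{\ell}\sqrt{\alpha_{j+1}}\,[\clT_{j+2}-\clT_{j+1}]
= \sqrt{\alpha_{\ell+1}}\,\clT_{\ell+2} - \sqrt{\alpha_{l+1}}\,\clT_{l+1} - \sum_{j=l+1}^{\ell}[\sqrt{\alpha_{j+1}}-\sqrt{\alpha_j}]\,\clT_{j+1}.
\]
The fixed boundary term contributes $\alpha_{l+1}\Expect[\|\clT_{l+1}\|^2] = O(\alpha_n)$. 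For the weighted sum, I would apply Cauchy--Schwarz in the form $(\sum\delta_j b_j)^2 \le (\sum\delta_j)(\sum\delta_j b_j^2)$ with $\delta_j \eqdef |\sqrt{\alpha_{j+1}}-\sqrt{\alpha_j}| = O(\alpha_j^{3/2})$ by~\eqref{e:sqrtalphas}; this contributes $O\bigl((\sum\delta_j)^2\bigr) = O(\alpha_n T^2)$ in expectation.

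The main obstacle is the remaining boundary term $\Expect[\max_{l\le\ell<k}\alpha_{\ell+1}\|\clT_{\ell+2}\|^2]$: the crude inequality $\max\le\sum$ fails, because the block can contain up to $O(1/\alpha_n)$ indices while $\Expect[\|\clT_{\ell+2}\|^2]$ is merely bounded. The trick is to use the $\ell^p$-nesting $(\max_\ell a_\ell)^2 \le (\sum_\ell a_\ell^4)^{1/2}$, after which Jensen's inequality yields
\[
\Expect\bigl[\max_{l\le\ell<k}\alpha_{\ell+1}\|\clT_{\ell+2}\|^2\bigr]
\le \Bigl(\sum_{\ell=l}^{k-1}\alpha_{\ell+1}^2\,\Expect[\|\clT_{\ell+2}\|^4]\Bigr)^{1/2}
\le \{\bdd{t:noise-bound-l4}\}^2\bigl(\alpha_n(T+1)\bigr)^{1/2},
\]
using $\sum\alpha_{\ell+1}^2 \le \alpha_n\sum\alpha_{\ell+1} \le \alpha_n(T+1)$ and the uniform $L^4$ bound of Corollary~\ref{t:noise-bound-l4}. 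Taking $\bdds{t:tight-noise-l2}_n$ to be the maximum of the three bounds obtained then completes the argument.
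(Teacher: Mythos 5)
Your proposal is correct, and for parts (i) and (ii) it follows essentially the same route as the paper: pass the maximum through the sum via the $L_2$-triangle inequality, then exploit the extra $\sqrt{\alpha_{j+1}}$ factor together with the uniform $L_4$ moment bounds to get a prefactor of order $\sqrt{\alpha_n}$. For part (iii), the paper simply asserts that the argument is ``identical to the corresponding bounds in Lemma~\ref{t:tight-noise-l4},'' and leaves it at that. That appeal is actually a bit of a leap: the summation-by-parts formula in Lemma~\ref{t:tight-noise-l4}(iii) produces a fixed boundary term $\sqrt{\alpha_k}\,\clT_{k+1}$, whereas here the summation runs only to $\ell$ and one must control $\max_{l\le\ell<k}\alpha_{\ell+1}\|\clT_{\ell+2}\|^2$, a maximum over $O(1/\alpha_n)$ terms each of which has bounded but non-vanishing second moment, so $\max\le\sum$ is useless. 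You correctly identify this as the crux and close the gap with the nesting inequality $\max_\ell a_\ell\le(\sum_\ell a_\ell^2)^{1/2}$ followed by Jensen, converting the uniform fourth-moment bound of Corollary~\ref{t:noise-bound-l4} into the needed $O(\sqrt{\alpha_n(T+1)})$ estimate. (The one cosmetic quibble: your prose describes this as ``$(\max_\ell a_\ell)^2\le(\sum_\ell a_\ell^4)^{1/2}$'' while the display correctly applies the square-root variant with $a_\ell=\sqrt{\alpha_{\ell+1}}\|\clT_{\ell+2}\|$; the display is what matters.) In short: same overall scheme as the paper, but your treatment of the boundary term under the running maximum in (iii) is more careful and actually completes a step the paper elides.
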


\begin{proof}
The proofs of~(ii) and~(iii) are identical to the corresponding bounds in~\Cref{t:tight-noise-l4}.
Part~(i) requires the tighter bound
$\clE_j^T = O(\|y_j^{(n)}\|^2 \wedge \|y_j^{(n)}\|) $, so that,
$$\|\clE_j^T  \|^2  \le b_0^2
 \min\{  \alpha_j^2 \| z_j^{(n)}\|^4, \alpha_j  \| z_j^{(n)}\|^2\},$$
for some constant $b_0$,  and also,
\[
 \Expect[  \|\clE_j^T  \|^2 ]  \le b_0^2
 \min\bigl\{  \alpha_j^2 \Expect[   \| z_j^{(n)}\|^4]  , \alpha_j  \Expect[   \| z_j^{(n)}\|^2] \bigr\} .
\]
Applying the triangle inequality in $L_2$ gives,
\[
\begin{aligned}
\Expect\Bigl[  
	\max_{l \le \ell < k}
\bigl\| \sum_{j=l}^{\ell}
\sqrt{\alpha_{j+1}}  \clE_j^T  \bigr\|^2 \Bigr]^{\half}   
            & \le 
\sum_{j=n}^{w_n-1}
\sqrt{\alpha_{j+1}} \Expect\bigl[  
\bigl\|  \clE_j^T  \bigr\|^2 \bigr]^{\half}  
\\
&\le 
b_0
\sum_{j=n}^{w_n-1}
\sqrt{\alpha_{j+1}}
 \min\{  \alpha_j \| z_j^{(n)}\|^2_4, \sqrt{ \alpha_j}  \| z_j^{(n)}\|_2\} .
\end{aligned} 
\]
Choose $n_0\ge 1$ so that 
$  \alpha_j^2 \Expect [ \| z_j^{(n)}\|^4] \le  \alpha_j  \Expect[ \| z_j^{(n)}\|^2  ] $ for $j\ge n_0$.   
 The preceding then gives, for all $n\ge n_0$, $ l \ge n$ and $l\le k \le w_n$,
\[
\begin{aligned}
\Expect\Bigl[  
	\max_{l \le \ell < k}
\bigl\| \sum_{j=l}^{\ell}
\sqrt{\alpha_{j+1}}  \clE_j^T  \bigr\|^2 \Bigr]^{\half}   
&\le 
  b_0
\sum_{j=n}^{w_n-1}
\sqrt{\alpha_{j+1}} 
  \alpha_j 
 \| z_j^{(n)}\|_4^2    
 \le b_0 T \{ \max_{j\ge n} \sqrt{\alpha_{j+1}}  \| z_j^{(n)}\|_4^2 .
 \} 
\end{aligned} 
\]
The right-hand side vanishes as $n\to\infty$.  
\end{proof}

We next place the problem in the setting of~\cite[Ch.~7]{ethkur05},  which requires a particular decomposition.   As motivation, first write~\eqref{e:FCLT}  (with $F = \tfrac{\diffalpha}{2} I+A^*$) as,     
\[
 \sclim_{\SAtime} =   \intDrift_\SAtime  +   \Mart_\SAtime\,,   \qquad 0\le \SAtime\le T \,,\quad \text{where} \ \   \intDrift_\SAtime   
 					= \int_0^\SAtime  [\tfrac{\diffalpha}{2} I+A^*]  \sclim_t \,dt   \,,
 \]
 and $\{ \Mart_\SAtime \}$ is Brownian motion.  The required decomposition of $ \scerrorpull{\SAtime}{n}$ (defined in \eqref{e:scerror_norm}) is,
\begin{equation}
 \scerrorpull{\SAtime}{n} =   \scerrorpullDintDrift{\SAtime}{n} + \scerrorpullMart{\SAtime}{n}   \,,\qquad 0\le \SAtime\le T\,, \  n\ge 1,
\label{e:ethkur-decomposition}
\end{equation}
in which $\{\scerrorpullMart{\varble}{n} \}$ is a martingale for each $n$,   and the first term approximates $ \int_0^\SAtime  [\tfrac{\diffalpha}{2} I+A^*]   \scerrorpull{t}{n}  \,dt  $. 
The representation~\eqref{e:SA-z} suggests the choice of martingale,
\[
 \scerrorpullMart{\SAtime}{n}  =   \sum_{j=n+1}^{k}   \sqrt{\alpha_{j}}\MD_{j}   \,, \qquad
\SAtime_k \le \SAtime  <  \SAtime_{k+1} \,,
\]
valid for $\SAtime\ge \SAtime_{n+2}$,  with $ \scerrorpullMart{\SAtime}{n} \eqdef 0$ for  $\SAtime< \SAtime_{n+2}$.
Its covariance   is denoted $\Sigma^{(n)}_\SAtime \eqdef  \Expect[  \scerrorpullMart{\SAtime}{n} (\scerrorpullMart{\SAtime}{n})^\transpose]$.
The representation~\eqref{e:SA-z+z-errors} combined 
with~\Cref{t:tight-noise-l2} then gives us~\eqref{e:ethkur-decomposition} 
with $    \scerrorpullDintDrift{\SAtime}{n} \eqdef  
 \scerrorpull{\SAtime}{n} - \scerrorpullMart{\SAtime}{n}  $,
and a useful bound:
\begin{equation}
\begin{aligned}
	\scerrorpullDintDrift{\SAtime}{n}  
	&=   \int_0^{\SAtime }[\tfrac{\diffalpha}{2} I   + A ( \odestatepull{t}{n} )]\scerrorpull{t}{n}    \, dt 
	 + \clE_{\SAtime}^{\scerrorSymbol}(n)   	 	 \,, \qquad
\SAtime_k \le \SAtime  <  \SAtime_{k+1},
\\
\text{in which }\qquad
& \lim_{n \to \infty}  \Expect\bigl[  \sup_{    0 \le \SAtime \le T }  
\|  \clE_{\SAtime}^{\scerrorSymbol}(n)  \|^2 \bigr]  =0.
\end{aligned} 
\label{e:SA-approx-z} 
\end{equation}

\begin{proof}[Proof of~\Cref{t:FCLT}] 
We apply Theorem~4.1 of~\cite[Ch.~7]{ethkur05} to obtain the FCLT.  It is sufficient to establish the following bounds:
\begin{subequations}
\begin{align}
0  &=   \lim_{n\to\infty}   \Expect\bigl[  \sup_{0\le \SAtime\le T} \bigl\|   	\scerrorpull{\SAtime}{n}  -	\scerrorpull{\SAtime-}{n}  
\bigr\|^2\bigr]
\label{e:ethkur43}
   \\
0  &=   \lim_{n\to\infty}   \Expect\bigl[  \sup_{0\le \SAtime\le T} \bigl\|   	\scerrorpullDintDrift{\SAtime}{n}  -	\scerrorpullDintDrift{\SAtime-}{n}  
\bigr\|^2\bigr]
\label{e:ethkur44}
   \\
  0  &=   \lim_{n\to\infty}   \Expect\bigl[  \sup_{0\le \SAtime\le T} \bigl\|  \Sigma^{(n)}_\SAtime  - \Sigma^{(n)}_{\SAtime-}
  \bigr\|  \bigr]
\label{e:ethkur45}
   \\
0  &=   \lim_{n\to\infty}   \Prob \Bigl\{  \sup_{0\le \SAtime\le T} \bigl \| \scerrorpullDintDrift{\SAtime}{n} -
\int_0^\SAtime  [\tfrac{\diffalpha}{2} I+A^*]   \scerrorpull{t}{n}  \,dt  \bigr\| \ge \epsy \Bigr\},  &&  \epsy>0
\label{e:ethkur46}
   \\
\SAtime  \Sigma_MD &=   \lim_{n\to\infty} \Sigma^{(n)}_\SAtime 
\,, 
				\quad   &&  0\le\SAtime\le T.
\label{e:ethkur47}
\end{align} 
These five equations correspond to equations (4.3)--(4.7) of~\cite[Ch.~7]{ethkur05}.
\end{subequations}

Observe that~\eqref{e:ethkur43} is vacuous since 
$\scerrorpull{\varble}{n}  $ is continuous for each $n$.  
Proofs of the remaining  limits are established in order:

\wham{\eqref{e:ethkur44}:}
 We have $ 	\scerrorpullDintDrift{\SAtime}{n}  -	\scerrorpullDintDrift{\SAtime-}{n}  =    \sqrt{\alpha_{j-1}}\MD_{j-1}  $ if $\SAtime =\SAtime_j$ for some $j$, and zero otherwise.   Consequently,  for each $n$,
\[ 
 \Expect\bigl[  \sup_{0\le \SAtime\le T} \bigl\|   	\scerrorpullDintDrift{\SAtime}{n}  -	\scerrorpullDintDrift{\SAtime-}{n}  
\bigr\|^2\bigr]
\le   
  \Expect\bigl[   \max_{n \le j \le w_n}   \alpha_{j-1} \| \MD_{j-1}\| ^2 \bigr]    
  \le   
  \sqrt{
  \Expect\bigl[   \max_{n \le j \le w_n}   \alpha_{j-1}^2  \| \MD_{j-1}\|^4 \bigr] }.
\]
The right-hand side is bounded as follows:
\[
  \Expect\bigl[   \max_{n \le j \le w_n}   \alpha_{j-1}^2  \| \MD_{j-1}\|^4 \bigr] 
  \le
\sum_{j=n}^\infty 
  \Expect\bigl[     \alpha_{j-1}^2  \| \MD_{j-1}\|^4 \bigr]     \le  \bigl( \max_{j\ge n}   \Expect\bigl[     \| \MD_{j-1}\|^4 \bigr]  \bigr)
\sum_{j=n}^\infty   \alpha_{j-1}^2.
\]
The fourth moment is uniformly bounded  by~\Cref{t:noise-bound-l4},  and Assumption~(A1) then implies that the right-hand side vanishes as  $n\to\infty$.

\wham{\eqref{e:ethkur45}:}
  $ \Sigma^{(n)}_\SAtime  - \Sigma^{(n)}_{\SAtime-} = \alpha_{j-1} \Expect[\MD_{j-1} \MD_{j-1}^\transpose]  $ if $\SAtime =\SAtime_j$ for some $j$, and zero otherwise.
 Applying~\Cref{t:noise-bound-l4} once more gives,
$\displaystyle
\lim_{n\to\infty}  \sup_{\SAtime\ge 0}  \| \Sigma^{(n)}_\SAtime  - \Sigma^{(n)}_{\SAtime-} \|= 0$.

\wham{\eqref{e:ethkur46}:}
   Applying the definitions,
\[
 \scerrorpullDintDrift{\SAtime}{n} -
\int_0^\SAtime  [\tfrac{\diffalpha}{2} I+A^*]   \scerrorpull{t}{n}  \,dt   
=   \clE_{\SAtime}^{\scerrorSymbol}(n)   +   \int_0^{\SAtime } \bigl [ A ( \odestatepull{t}{n} )  - A^* \bigr]\scerrorpull{t}{n}    \, dt  ,
\]
and hence for each $n$ and $0\le \SAtime\le T$,
\[
\bigl\| 
 \scerrorpullDintDrift{\SAtime}{n} -
\int_0^\SAtime  [\tfrac{\diffalpha}{2} I+A^*]   \scerrorpull{t}{n}  \,dt     \bigr\| 
\le   \bigl \|  \clE_{\SAtime}^{\scerrorSymbol}(n)    \bigr\|  +   \int_0^{T }\bigl\|   \bigl [ A ( \odestatepull{t}{n} )  - A^* \bigr]\scerrorpull{t}{n}   \bigr\|  \, dt .
\]
The limit in~\eqref{e:SA-approx-z} 
tells us that
  $\sup_{0\le \SAtime\le T}  \bigl \|  \clE_{\SAtime}^{\scerrorSymbol}(n)    \bigr\|  $ converges to zero as $n\to\infty$, where the convergence is in $L_2$.  We next show that the second term vanishes in $L_1$.  

Thanks to Assumption~(A5a) we can apply Lipschitz continuity of $A$ to obtain the following bound, for some fixed constant $L_A<\infty$,
\[
\begin{aligned} 
  \Expect\Bigl[ \int_0^{T }\bigl\|   \bigl [ A ( \odestatepull{t}{n} )  - A^* \bigr]\scerrorpull{t}{n}   \bigr\|  \, dt  \Bigr] 
&  \le   L_A  \int_0^{T }  \Expect \bigl[ \bigl\|     \odestatepull{t}{n}  -\theta^* \bigr\|  \ \bigr\| \scerrorpull{t}{n}   \bigr\|  \bigr] \, dt  
\\
&
  \le   L_A  \sqrt{  \int_0^{T }  \Expect \bigl[ \bigl\|     \odestatepull{t}{n}  -\theta^* \bigr\| ^2 \bigr] \, dt  
   \int_0^{T }  \Expect \bigl[  \bigr\| \scerrorpull{t}{n}   \bigr\|^2 \bigr] \, dt    }.
  \end{aligned} 
\]
\Cref{t:BigConvergence} combined with~\Cref{t:BigBounds} implies that the first integral converges to zero.   
 \Cref{t:noise-bound-l4}  tells us that $ \Expect \bigl[  \bigr\| \scerrorpull{t}{n}   \bigr\|^2 \bigr] \, dt $ is bounded in $n$ and $t\ge 0$.
 
The limit~\eqref{e:ethkur47} is immediate from the definitions.
\end{proof}

\smallskip

\noindent 
{\bf Technical results for the CLT} 
Recall that the proof of~\Cref{t:CLT} rests on~\Cref{t:FCLT},~\Cref{t:tight} 
and~\Cref{t:zCLTbdd}.   It remains to prove the lemma,  which requires
the following bounds  whose proof is omitted.

\begin{lemma}
\label[lemma]{t:EAScontraction}
The following bound holds under~{\em (A5b)}: For a constant $b$ independent of $k$ or $T$:
\[
  \alpha_{ w_k^-}  /  \alpha_k 
  \le
 	\begin{cases}
	b\exp(T),  &   \rho=1
	\\
	b (1+ T)^{\rho/(1-\rho)},  &  \rho<1.
	\end{cases}
\]
Consequently,  with $\{\xi^-_k(T)  :  k\ge w_1\}$ defined in~\eqref{e:xi-k}, $\displaystyle
\lim_{T\to\infty}  \sup_{k\ge w_1}  \xi^-_k(T)   =0
$.
\end{lemma}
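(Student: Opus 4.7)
The plan is to unpack the definitions and reduce everything to elementary integral estimates comparing $\sum_{i=n+1}^k \alpha_i$ with $\int x^{-\rho}\,dx$. Set $n = w_k^-$. By the definitions of $w_\cdot$ and $w_\cdot^-$, we have $w_n \le k < w_{n+1}$, hence
\[
T \le \SAtime_k - \SAtime_n \quad\text{and}\quad \SAtime_k - \SAtime_{n+1} < T,
\]
so $\SAtime_k - \SAtime_n < T + \alpha_{n+1} \le T+1$ by (A1). The first step is then to observe that
\[
\SAtime_k - \SAtime_n \;=\; \sum_{i=n+1}^{k} \frac{1}{i^\rho} \;\ge\; \int_{n+1}^{k+1}\frac{dx}{x^\rho},
\]
so the upper bound $\SAtime_k - \SAtime_n \le T+1$ yields a control on the ratio $(k+1)/(n+1)$.

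Second, carry out the two cases. When $\rho=1$, the integral inequality gives $\log\bigl((k+1)/(n+1)\bigr) \le T+1$, hence $(k+1)/(n+1) \le e^{T+1}$, and since $k/n \le 2(k+1)/(n+1)$, we obtain
\[
\alpha_n/\alpha_k \;=\; k/n \;\le\; 2e\,e^{T},
\]
proving the claim with $b = 2e$ for $\rho=1$. When $\rho<1$, the integral inequality becomes $(k+1)^{1-\rho} - (n+1)^{1-\rho} \le (1-\rho)(T+1)$, which rearranges to
\[
\frac{k+1}{n+1} \;\le\; \Bigl(1 + \frac{(1-\rho)(T+1)}{(n+1)^{1-\rho}}\Bigr)^{1/(1-\rho)}.
\]
Using $n+1 \ge 2$ to absorb the $n$-dependence into a constant $C_\rho$, this gives $(k+1)/(n+1) \le (1 + C_\rho(T+1))^{1/(1-\rho)}$, and then $\alpha_n/\alpha_k = (k/n)^\rho \le 2^\rho(1 + C_\rho(T+1))^{\rho/(1-\rho)}$, which is $O\bigl((1+T)^{\rho/(1-\rho)}\bigr)$ uniformly in $k$ (and hence in $n$). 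The mild technicality here is dealing with small values of $n+1$ where $(n+1)^{1-\rho}$ could be near $1$; this is handled just by the uniform lower bound $n+1\ge 2$ (and for $k\ge w_1$ we have $n=w_k^-\ge 1$).

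Third, combine with the definition \eqref{e:xi-k} to conclude. With the bound on $\alpha_{w_k^-}/\alpha_k$ in hand,
\[
\xi^-_k(T) \;=\; \sqrt{\alpha_{w_k^-}/\alpha_k}\,\bexp\,e^{-\rhoexp T} \;\le\; \bexp\sqrt{b}\cdot
\begin{cases}
e^{(1/2 - \rhoexp)T}, & \rho=1,\\[2pt]
(1+T)^{\rho/(2(1-\rho))} e^{-\rhoexp T}, & \rho<1.
\end{cases}
\]
In the case $\rho=1$, Assumption (A5b) gives $\rhoexp>1/2$, so the exponent $1/2-\rhoexp$ is strictly negative and the bound tends to $0$ as $T\to\infty$, uniformly in $k$. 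In the case $\rho<1$, only $\rhoexp>0$ is required and the exponential decay dominates the polynomial factor, so again the bound tends to $0$ as $T\to\infty$, uniformly in $k$. This yields $\displaystyle\lim_{T\to\infty}\sup_{k\ge w_1}\xi^-_k(T)=0$.

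There is no single hard step; the only mild obstacle is checking that the constants produced by the integral comparison are genuinely independent of $k$ (equivalently, of $n=w_k^-$), which is why I handle small-$n$ effects via the explicit bound $n+1\ge 2$ rather than an asymptotic argument. The interplay with the assumption $\rhoexp>1/2$ specifically in the $\rho=1$ case is exactly what motivates (A5b) and is reflected in the form of the final bound.
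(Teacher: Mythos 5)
Your proof is correct and takes essentially the same route as the paper's: you bound $\SAtime_k - \SAtime_{w_k^-}$ above by $T+1$ using the definition of $w_k^-$ and (A1), compare the partial sum $\sum i^{-\rho}$ with the integral $\int x^{-\rho}\,dx$, and then split into the $\rho=1$ (logarithmic) and $\rho<1$ (power) cases exactly as in the paper's argument; your treatment is in fact slightly more careful at the endpoints (using $(k+1)/(n+1)$ and the clean bound $\SAtime_k - \SAtime_n < T+\alpha_{n+1}\le T+1$ rather than the paper's informal $T+o(1)$), and you spell out the final step deducing $\sup_k\xi^-_k(T)\to0$ from (A5b), which the paper leaves implicit.
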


%
%
%

\begin{proof}[Proof of~\Cref{t:zCLTbdd}]
Part~(i) is immediate from~\Cref{t:FCLT} and~\Cref{t:tight}.
For~(ii) we first establish boundedness of  $  \|  z_k   \|_4 $, which is based on~\eqref{e:c-contract}
written in ``backwards form'':
\begin{equation} 
\begin{aligned} 
\| z_k   \|  &  \le  \|  z^{(w^-_k)}_k \|    +  \xi_{w^-_k}(T)   \|  z_{w^-_k}  \| 
\quad
\text{and}
\quad
\| z_k   -  z^{(w^-_k)}_k \|   &  \le     \xi^-_k(T)    \|  z_{w^-_k}  \|  .
\end{aligned} 
\label{e:c-contract-}
\end{equation}
 For given $n \ge 1$,   suppose that the integer $k$ satisfies $m_n < k \le m_{n+1}$,  with $\{m_n\}$ used in~\eqref{e:preFCLTgivesCLT}.   We then have  $m_{n-1} < w_k^- \le m_n$, and from the first bound in~\eqref{e:c-contract-},
\[
\begin{aligned}
b^z_{n+1}
\eqdef
\max_{m_n < k \le m_{n+1}}  \| z_k   \|_4   & \le   \max_{m_{n-1} < j \le m_{n}}  \|  z^{(w^-_j)}_j \|_4   +  \bar\xi^-(T)     \max_{m_{n-1} < j \le m_{n}}   \|  z_j  \|_4 
\\
 & \le  B^z   +  \bar\xi^-(T)    b^z_n,
\end{aligned} 
\]
where $B^z$ is a finite constant bounding each of $  \|  z^{(w^-_j)}_j \|_4 $,   and $ \bar\xi^-(T) = \sup_{k\ge w_1}  \xi^-_k(T)$.
\Cref{t:EAScontraction} tells us that we can choose $T$ so that $ \bar\xi^-(T) < 1$, which implies that $\{ \| z_k   \|_4 : k\ge 1 \}$ is   bounded.
 The second bound in~\eqref{e:c-contract-} gives the final conclusion:
\[
\limsup_{k\to\infty} \| z_k   -  z^{(w^-_k)}_k \|_4  \le    \bdds{t:zCLTbdd}_T \eqdef  \bar\xi^-(T)  \limsup_{k\to\infty}   \|  z_k \|_4 .
 \] 
\end{proof}

\subsection{Optimizing the Rate of Convergence}
\label{s:PR}

The covariance matrix~\eqref{e:SigmaMD} appears in  consideration 
of the pair of martingales,
\begin{equation}
\MartPR_K   \eqdef \sum_{k=1}^K    \MD_k\,,  
\qquad
\MartPR^*_K   \eqdef \sum_{k=1}^K    \MDstar_k\,,  
 \qquad 1\le K <\infty \,,
\label{e:MartPR}
\end{equation}
in which  $\MDstar_{n+1} \eqdef \haf(\theta^*, \Phi^*_{n+1}) - \Expect[\haf(\theta^*, \Phi^*_{n+1})  \mid \clF_{n}]  $ for any $n\ge 0$ (in analogy with~\eqref{e:noise-decomp}).   We have:
 \[
  \Sigma_{\Delta}^*
  =
   \lim_{n\to\infty}\frac{1}{n}  \Cov(\MartPR_{n} ) =  \lim_{n\to\infty}\frac{1}{n}  \Cov(\MartPR^*_{n} ).
 \] 
The following result implies~\Cref{t:PR}:

\begin{proposition}
\label[proposition]{t:PRbig}
Under the assumptions of~\Cref{t:CLT} there is a constant $\bdd{t:PRbig}$, depending on the initial condition $(\theta_0,\Phi_0)$,  such that~\eqref{e:PRbig} holds.
\end{proposition}

\begin{subequations}

\begin{proof}
We begin with a version of~\eqref{e:SA-y}:
\[
\tiltheta_{k+1} = \tiltheta_k + \alpha_{k+1}[ A^* \tiltheta_k    + \clE_k^T   + \Delta_{k+1} ] \,,\quad k\ge 0,
\]
where $ \clE_k^T = \barf(\theta_k) - A^* \tiltheta_k $, which satisfies $\|  \clE_k^T  \| \le L_A \| \tiltheta_k \|^2$,  with $L_A$ the Lipschitz constant for $A(\theta)$.    Dividing each side by $ \alpha_{k+1}$  and summing from $k=1$ to $n$ gives,
\[
\frac{1}{  \alpha_{n+1} } \tiltheta_{n+1} -\frac{1}{  \alpha_1 }  \tiltheta_1   = A^*  \sum_{k=1}^n   \tiltheta_k    
+ \sum_{k=1}^n  [  \clE_k^T   + \Delta_{k+1}  +  \diffalpha_k   \tiltheta_k   ]       \,, 
\]
with $\{ \diffalpha_k\}$ defined in~(A1),  which for the choice of step-size assumed here gives $\diffalpha_k \le  \rho k^{\rho - 1} $.   Rearranging terms and dividing by $n$,  
\[
  A^*  \tilthetaPR_n  +   n^{-1}\MartPR^*_{n+2} 
=
  n^{-1}\MartPR^*_{n+1}  +
\frac{1}{  n \alpha_{n+1} } \tiltheta_{n+1} -\frac{1}{  n \alpha_1 }  \tiltheta_1   -  \frac{1}{n}    \sum_{k=1}^n  [  \clE_k^T   + \Delta_{k+1}  +  \diffalpha_k   \tiltheta_k   ].
\]
where   $\MartPR^*_{n+1}   $ is defined in~\eqref{e:MartPR}.
Applying~\Cref{t:noise-decomp}  gives:
\begin{align}
 \| A^*  \tilthetaPR_n  +  n^{-1}\MartPR^*_{n}  \| \le    \frac{1}{n}  &   \sum_{k=1}^n \bigl [  L_A \| \tiltheta_k \|^2
 				 +   \alpha_{k+1}  \|  \Oops_{n+1} \|    \bigr] 
 \label{e:PRerror1}
 \\
 &
 +
   \frac{1}{n}   \|
\frac{1}{    \alpha_{n+1} } \tiltheta_{n+1} -\frac{1}{    \alpha_1 }  \tiltheta_1 \|   
+     \frac{1}{n}     \sum_{k=1}^n \diffalpha_k  \| \tiltheta_k \|  
 \label{e:PRerror2}
 \\
 &+    \frac{1}{n}  \| \MartPR_{n+1}  -  \MartPR^*_{n+1}\|
  \label{e:PRerror3}
 \\
 &+  \frac{1}{n}\|\clT_{n+1} - \clT_1 \|  +  \frac{1}{n}  \| \MartPR^*_{n+1} -  \MartPR^*_{n}  \| 
 \label{e:PRerror4}
\end{align}  
 
On taking   $L_2$ norms  we obtain,  
 \[
\Expect[  \| A^*  \tilthetaPR_n  +  n^{-1}\MartPR^*_{n+1}  \|^2] ^{1/2}
\le 
  \epsy_n^a     +  \epsy_n^b     +  \epsy_n^c +  \epsy_n^d,
  \]
  in which $ \epsy_n^a$ is the $L_2$ norm of the right-hand side of~\eqref{e:PRerror1},  
   $ \epsy_n^b$ is the $L_2$ norm of the right-hand side of~\eqref{e:PRerror2},     $ \epsy_n^c$ is the $L_2$ norm of the right-hand side of~\eqref{e:PRerror3},  and   $ \epsy_n^d$ is the $L_2$ norm of the right-hand side of~\eqref{e:PRerror4}. 
 Repeated application of the bound $  \Expect[   \|  \tiltheta_n \|^r] \le O(\alpha_n^{r/2} ) $ for $1\le r\le 4$, gives  
   $ \epsy_n^a  =    O\bigl( \alpha_n  \bigr)$,
     $ \epsy_n^b  =    O\bigl( n^{-1 + \rho/2}  \bigr)$,
     $ \epsy_n^c  =    O\bigl( n^{-(1 + \rho)/2}  \bigr)$
  and    $ \epsy_n^d  =    O\bigl( n^{-1 }  \bigr)$.  
  
  The error term $ \epsy_n^a$   dominates, which completes the proof.
\end{proof}

\end{subequations}

\subsection{Counterexample}

The proof of~\Cref{t:unbounded-mom} begins with a representation of~\eqref{e:sa-mm1}:   
\begin{equation}
	\theta_{n} =\theta_0 \prod_{k=1}^{n} [1 + \alpha_k(Q_k - \eta - 1)] + \sum_{k=1}^{n} \alpha_k \CEdist_k \prod_{l=k+1}^{n}[1 + \alpha_l(Q_l - \eta -1)] 
\label{e:unbounded_step1}.
\end{equation}
The two elementary lemmas that follow are used to obtain lower bounds.
The first is obtained based on comparison of the sum with the integral $ \int_{n/2+1}^{n+1}\frac{1}{x}\, dx$.

\begin{lemma}
	\label[lemma]{t:half-harm-series}
	For $n\geq 1$,  $\displaystyle	\sum_{k  = n/2+1}^n \frac{1}{k } \geq \log 2 - 1/n $.
\end{lemma}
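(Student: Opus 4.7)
The inequality is elementary and will follow from standard integral-comparison techniques for harmonic sums, together with one application of the basic estimate $\log(1+y) \le y$. The plan is to write the sum as a left-endpoint Riemann approximation of an integral, bound it below by the integral itself, and then show the resulting logarithmic expression differs from $\log 2$ by at most $1/n$ (after multiplication by $n$).

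The first step will be to exploit monotonicity of $x \mapsto 1/x$: since this function is decreasing on $(0,\infty)$, for each integer $\ell \ge 1$ we have $\tfrac{1}{\ell} \ge \int_{\ell}^{\ell+1} \tfrac{dx}{x}$. Summing this bound over $\ell$ from $n/2 + 1$ to $n$ telescopes into a single logarithm:
\[
\sum_{\ell = n/2+1}^{n} \frac{1}{\ell} \;\ge\; \int_{n/2+1}^{n+1} \frac{dx}{x} \;=\; \log\frac{n+1}{n/2+1} \;=\; \log\frac{2(n+1)}{n+2}.
\]

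Next I will split the right-hand side as $\log 2 + \log\bigl((n+1)/(n+2)\bigr) = \log 2 - \log\bigl(1 + 1/(n+1)\bigr)$, and apply the standard inequality $\log(1+y) \le y$ (valid for all $y > -1$) with $y = 1/(n+1)$. This yields
\[
\sum_{\ell = n/2+1}^{n} \frac{1}{\ell} \;\ge\; \log 2 - \frac{1}{n+1}.
\]
Multiplying through by $n$ gives $n\sum_{\ell=n/2+1}^{n} 1/\ell \;\ge\; n\log 2 - n/(n+1) \;>\; n\log 2 - 1$, which is the claim.

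No step here looks challenging; the only minor care required is whether $n/2$ is treated as an integer (and similarly $n/2+1$) or as a real-valued lower limit. If the intended convention is $\lfloor n/2 \rfloor + 1$, the integral bound still applies with the lower limit replaced by $\lfloor n/2 \rfloor + 1 \le n/2 + 1$, which only strengthens the inequality (since the integrand is positive and decreasing, enlarging the interval on the left can only increase the integral). Either interpretation therefore gives the same conclusion with essentially the same argument.
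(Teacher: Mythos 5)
Your proof is correct and takes essentially the same route as the paper's: the same integral comparison $\sum_{\ell=n/2+1}^n 1/\ell \ge \int_{n/2+1}^{n+1} dx/x = \log\tfrac{2(n+1)}{n+2}$, followed by an elementary logarithm estimate yielding $\ge \log 2 - \tfrac{1}{n+1}$. The only cosmetic difference is that you use $\log(1+y)\le y$ on $\log\tfrac{n+2}{n+1}$ while the paper uses the equivalent $\log(1+x)\ge x/(1+x)$ on $\log\bigl(1-\tfrac{1}{n+2}\bigr)$, and you fold the $n/(n+1) < 1$ observation in after multiplying by $n$ rather than before.
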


 Denote  $\sigma = \log(1+2\delta)/(2\delta)$  with  $\delta =\mu - \alpha $.     
A first and second order Taylor series approximations of the logarithm leads to the following:
\begin{lemma}
\label[lemma]{t:exp-bound-delta}
If $\delta>0$ then   
$\sigma   \geq 1 - \delta $  and $
		\exp(\sigma x) \leq 1 + x \  $   for $  x\in[0\,, 2\delta] $.    Moreover,  $\sigma   >  \delta/(4\alpha  \log 2)$ if $\alpha > 1/3$.     
\end{lemma}

%
%
%
%

%

The two lemmas provide bounds on~\eqref{e:unbounded_step1}, subject to constraints on the sample paths of the queue.
The next step in the proof is to present a useful constraint, and bound the probability that it is satisfied.  For this we turn to the functional LDP for a reflected random walk.  

Define for any $n\ge 1$  the piecewise constant function of time $q^n_t = \frac{1}{n} Q_{\lfloor n t \rfloor}$.
For each $\epsy\in [0,1/2)$ we define a constraint on the scaled process as follows:
Let $\clX$ denote the set of all c\`adl\`ag functions $\phi\colon[0,1]\to\Re_+$,   and  let $\clR_\epsy \subset \clX$ consist of those functions satisfying the following strict bounds:  
\begin{equation} 
  \delta \epsy + \min \{ \delta t ,   \delta (1-t) \}    \le q_t  \le 2\delta t  \qquad\text{for all $t\in [\epsy, 1]$ when $q\in\clR_\epsy$.}
\label{e:clR}
\end{equation}
If $q^n \in\clR_\epsy$ this implies a bound on the term within the product in~\eqref{e:unbounded_step1}: 
\begin{lemma}
\label[lemma]{t:LDP_Q_Bdd}
For given $\epsy\in (0,1/2)$,  suppose that $q^n   \in\clR_\epsy$.
Then, there is $ n_\epsy$ such that the following bounds hold for each $n\ge n_\epsy$,   and each $\ell$ satisfying $\epsy n \le \ell  \le n$:
\begin{equation}
\min \{ \delta    ,   \delta ( n \alpha_\ell  -1 ) \}   \le 
\alpha_\ell [Q_\ell  - \eta - 1  ] \le 2\delta   .
\label{e:clRQ}
\end{equation}
\end{lemma}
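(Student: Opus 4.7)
The plan is a direct translation of the sample-path bounds \eqref{e:clR} into pointwise bounds on $Q_\ell$, exploiting the identity that links the scaled process $q^n$ to the queue. Since the step-size in \eqref{e:sa-mm1} is $\alpha_\ell = 1/\ell$, the definition $q^n_t = Q_{\lfloor nt\rfloor}/n$ gives, at the rescaled time $t_\ell := \ell/n$, the basic identity
\[
\alpha_\ell Q_\ell = \tfrac{n}{\ell}\, q^n_{\ell/n},
\qquad n\alpha_\ell - 1 = \tfrac{n}{\ell}-1.
\]
For any $\ell$ in the range $\epsy n \le \ell \le n$ the time $t_\ell$ lies in $[\epsy,1]$, so both bounds of \eqref{e:clR} are applicable at $t_\ell$.

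For the upper bound in \eqref{e:clRQ}, I would apply $q^n_{t_\ell} \le 2\delta t_\ell$ and multiply by $n/\ell$ to obtain $\alpha_\ell Q_\ell \le 2\delta$; then discard $(\eta+1)/\ell \ge 0$. For the lower bound, the key algebraic observation is that scaling by the positive constant $n/\ell$ distributes across the minimum:
\[
\tfrac{n}{\ell}\min\bigl\{\delta\tfrac{\ell}{n},\ \delta(1-\tfrac{\ell}{n})\bigr\}
   = \min\bigl\{\delta,\ \delta(\tfrac{n}{\ell}-1)\bigr\}
   = \min\{\delta,\ \delta(n\alpha_\ell - 1)\}.
\]
Applying the lower bound in \eqref{e:clR} at $t_\ell$ and multiplying through by $n/\ell$ therefore yields
\[
\alpha_\ell Q_\ell \ge \delta\epsy\tfrac{n}{\ell} + \min\{\delta,\ \delta(n\alpha_\ell - 1)\}.
\]

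It remains to absorb the deterministic correction $(\eta+1)/\ell$ into the $\delta\epsy(n/\ell)$ term. Choosing $n_\epsy := \lceil (\eta+1)/(\delta\epsy) \rceil$ guarantees $\delta\epsy\, n \ge \eta+1$ for all $n\ge n_\epsy$, hence $\delta\epsy(n/\ell) \ge (\eta+1)/\ell$ uniformly in $\ell \in [\epsy n, n]$, which completes the lower bound. The statement is essentially bookkeeping; there is no real obstacle beyond recognizing the algebraic identity that relates the two forms of the minimum appearing in \eqref{e:clR} and \eqref{e:clRQ}, and selecting $n_\epsy$ large enough to dominate the $O(1/\ell)$ deterministic shift coming from the $-\eta-1$ term.
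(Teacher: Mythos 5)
Your proof is correct and follows the same route as the paper's: the identity $\alpha_\ell Q_\ell = (n/\ell)\,q^n_{\ell/n}$, the translation of the bounds in \eqref{e:clR} at $t_\ell=\ell/n$, and the absorption of the deterministic $(\eta+1)/\ell$ shift into the $\delta\epsy$ cushion. The only difference is that you compare $\delta\epsy\, n/\ell$ to $(\eta+1)/\ell$ directly rather than through the intermediate bound $1/\ell\le 1/(\epsy n)$, which lets you take $n_\epsy=\lceil(\eta+1)/(\delta\epsy)\rceil$ instead of the paper's larger $(\eta+1)/(\epsy^2\delta)$ — a harmless, slightly sharper choice.
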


\begin{proof}
Fix $\ell\geq \epsy n$ and denote $t=\ell/n$.  Under the 
assumption that $\alpha_\ell =1/\ell$,
\[
\frac{1}{t} q^n_t =  \frac{1}{t}  \frac{1}{n} Q(  \ell   )  =     \alpha_\ell Q(  \ell   )  .
\] 
 The upper bound is then immediate:  If $q^n\in  \clR_\epsy$ then,
\[
\alpha_\ell [Q_\ell  - \eta - 1  ] \le     \alpha_\ell Q(  \ell   )  =  \frac{1}{t}  q^n_t    \le  2\delta  .
\] 
The lower bound proceeds similarly, where we let  $n_\epsy =  (\eta +1)/(\epsy^2\delta)$.
For $\ell\geq\epsy n$,
\[
\begin{aligned} 
\alpha_\ell [Q_\ell  - \eta - 1  ]    = \frac{1}{t}   q^n_t    - \frac{1}{\ell} (\eta+1)   
&\ge  \frac{1}{t}   q^n_t    - \frac{1}{\epsy n} (\eta+1)
\\
&\ge  \frac{1}{t}  \Bigl(  \delta \epsy + \min \{ \delta t ,   \delta (1-t) \}  \Bigr)  - \frac{1}{\epsy n} (\eta+1)
\\
&\ge   \frac{1}{t}   \min \{ \delta t ,   \delta (1-t) \},  \qquad \text{when $n\ge n_\epsy$},
\end{aligned} 
\]
where the final inequality uses the bound $  \frac{1}{t}  \delta \epsy \ge  \frac{1}{\epsy n} (\eta+1)$ for $n\ge n_\epsy$ and $t\le 1$.
The proof is completed on substituting $1/t = n/\ell$.
\end{proof}

We now have motivation to bound the probability that  $q^n   \in\clR_\epsy$.

The log-moment generating function for the distribution of $D_{k+1}$ 
appearing in~\eqref{e:RRW}
is  $\Lambda(\vartheta) = \log (\alpha e^\vartheta + \mu e^{-\vartheta}) $.   Its convex dual is finite only for $|v|\le 1$, with
\begin{equation}
I(v) = \half (1+v) \log\Bigl( \frac{1+v}{2\alpha} \Bigr)  	
	+
	\half (1- v) \log\Bigl( \frac{1-v}{2\mu} \Bigr)  \,,\qquad   -1\le v\le 1 \, .
\label{e:MM1rate}
\end{equation}

\begin{lemma}
\label[lemma]{t:MM1rate}
For each $\epsy\in (0,1/2)$ the following holds:  
\begin{equation}
\lim_{n\to\infty} \frac{1}{n}\log \bigl(  \Prob \{ q^n \in \clR_\epsy \}   \bigr)  
	= - \{ \epsy I(\delta (1+\epsy) ) + (\tfrac{1}{2} -\epsy) I(\delta) \} 
	\geq  -   \half \frac{1}{\alpha}   \delta^2   +  O(\epsy^2) .
\label{e:clR_limit}
\end{equation}
\end{lemma}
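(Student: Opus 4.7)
The plan is to invoke a sample-path large deviations principle (LDP) for the scaled reflected random walk $q^n$ defined in~\eqref{e:fluidQ}, and then solve a constrained variational problem for the associated rate function over the tent-shaped set $\clR_\epsy$.

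\emph{Step 1 (Sample-path LDP).} The unreflected increments $D_k$ of $\bfmQ$ are i.i.d.\ with $\Prob(D_k=+1)=\alpha$ and $\Prob(D_k=-1)=\mu$, so Cram\'er's theorem gives the rate function $I$ displayed in~\eqref{e:MM1rate}. Mogulskii's theorem combined with continuity of the one-dimensional Skorokhod reflection map \cite{ganoco02} yields an LDP on $C([0,1];\posRe)$ at speed $n$, with good rate function
\[
  J(\phi) \eqdef \int_0^1 I(\dot\phi_t)\, dt,
\]
defined on absolutely continuous $\phi$ with $\phi(0)=0$ and extended by $+\infty$ otherwise.

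\emph{Step 2 (Matching the LDP bounds on $\clR_\epsy$).} The constraint region has piecewise-linear boundary, and any optimizer lies on the active envelope. Perturbing such an optimizer slightly inward produces a path in the interior whose cost differs by $o(1)$, so that the usual open/closed LDP bounds agree:
\[
  \lim_{n\to\infty}\tfrac{1}{n}\log\Prob\{q^n\in\clR_\epsy\} \;=\; -\inf_{\phi\in\clR_\epsy} J(\phi).
\]

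\emph{Step 3 (Variational problem).} By convexity of $I$, the minimizer is piecewise linear with slope changes only where an envelope is active. On $[1/2,1]$ the lower envelope has slope $-\delta$, coinciding with the unique zero $v^\ocp=-\delta$ of $I$; this segment contributes $0$. On $[\epsy,1/2]$ the lower envelope has slope $+\delta$, giving contribution $(1/2-\epsy)\,I(\delta)$. On $[0,\epsy]$ the minimizer joins the origin to the boundary value of $\clR_\epsy$ at $t=\epsy$. To rule out a competing ``sit at zero, then rise'' strategy exploiting the reflection at $0$, I would compute
\[
  \partial_r\!\left[r\, I(c/r)\right] = I(v)-v I'(v) \;=\; \tfrac12\log\frac{1-v^2}{4\alpha\mu}, \qquad v=c/r,
\]
whose unique positive zero is $v=\delta$; since the target slope on $[0,\epsy]$ exceeds $\delta$, the stationary point is infeasible and the optimum is pinned at the boundary. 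Evaluating the resulting slope (after accounting for the exact pinch location of the two envelopes at $t=\epsy$) gives $\delta(1+\epsy)$, and summing the three contributions produces the stated identity.

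\emph{Step 4 (Quadratic upper bound).} A direct calculation gives $I(\delta)=\delta\log(\mu/\alpha)$, and $\log(1+x)\le x$ applied to $x=\delta/\alpha$ yields $I(\delta)\le \delta^2/\alpha$. Taylor-expanding $I$ at $v=\delta$ gives $I(\delta(1+\epsy))=I(\delta)+O(\epsy)$, so
\[
  \epsy\, I(\delta(1+\epsy)) + (\tfrac12-\epsy)\,I(\delta) \;=\; \tfrac12 I(\delta) + O(\epsy^2) \;\le\; \tfrac12\frac{\delta^2}{\alpha} + O(\epsy^2),
\]
which upon negation gives the second assertion of the lemma.

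\emph{Main obstacle.} The hardest part is Step~3: isolating the exact minimizer on the short interval $[0,\epsy]$ and ruling out the ``free'' reflection strategy. The reduction of $I-vI'$ to a single logarithm is the technical heart; once available, the rest of the variational analysis follows from convexity of $I$ and the elementary geometry of the tent. Step~2 is routine but also requires care because the upper and lower envelopes meet exactly at $t=\epsy$, so some interior approximation is needed to line up the two halves of the LDP.
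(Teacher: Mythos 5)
Your proposal follows the paper's route exactly: the sample-path LDP for the reflected walk via Mogulskii and the Skorokhod map (which the paper compresses into ``the contraction principle,'' citing the same reference), a convexity argument giving a piecewise-linear minimizer with slopes $\delta(1+\epsy)$, $\delta$, $-\delta$ whose descending leg costs nothing since $I(-\delta)=0$, and the final bound $\tfrac12 I(\delta)=\tfrac12\delta\log(1+\delta/\alpha)\le\tfrac12\delta^2/\alpha$. Your Step~3 is in fact more careful than the paper's (which simply asserts the optimal slopes without justification): the identity $I(v)-vI'(v)=\tfrac12\log\tfrac{1-v^2}{4\alpha\mu}$, with unique positive zero at $v=\delta$, is exactly what is needed to rule out the free-reflection (``wait at zero, then climb'') strategy that the paper's terse argument leaves to the reader.
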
 

\begin{proof}
The functional LDP for the sequence $\{q^n\}\subset \clX$ is obtained from the LDP for  $\{ D_k : k\ge1 \}$  via the
contraction principle~\cite{ganoco02}, which for~\eqref{e:clR_limit} gives
\[
\lim_{n\to\infty} \frac{1}{n}\log \bigl(  \Prob \{ q^n \in \clR_\epsy \}   \bigr)  = - \inf_{\phi \in \clR_\epsy}  \int_0^1 I( \ddt \phi_t) \, dt.
\]
Convexity of $I$ implies that the optimizer $\phi^*$ is piecewise linear as illustrated in~\Cref{f:tent-path}, with slope $\delta (1+\epsy)$ on the interval $[0,\epsy)$,   $\delta$  on the interval $(\epsy,\half)$,  and $-\delta$ on the remainder of the interval.    The proof of the limit in~\eqref{e:clR_limit} is completed on recognizing that $I(-\delta) =0$.

The inequality in~\eqref{e:clR_limit} is established in two steps. First, $ \epsy I(\delta (1+\epsy) )  =   \epsy   I(\delta  )  + O(\epsy^2) $.  
The proof is completed on substituting $v=\delta$ in~\eqref{e:MM1rate} to obtain $I(\delta) = \delta \log( 1 +  {\delta}/{\alpha}) \leq \delta^2 /\alpha$.
\end{proof}

\begin{proof}[Proof of~\Cref{t:unbounded-mom}]
The almost sure convergence in~(i) follows from~\Cref{t:BigConvergence}.

For~(ii), let $\epsy \in (0,\half)$ be fixed and denote $n_0 =\lceil\epsy n\rceil$ (the least integer greater than or equal to $\epsy n$). Given $\{\CEdist_n\}$ is i.i.d.\ with zero mean and unit variance, we have for each $n \ge  n_0$,
\[
\begin{aligned}
		\Expect[\theta_n^2] 
		&\geq\Expect\Bigl[  \alpha_{n_0-1}^2 \CEdist_{n_0-1}^2 \prod_{k=n_0}^{n}[1 + \alpha_k(Q_k - \eta - 1)]^2  \Bigr] 
				= \alpha_{n_0-1}^2 \Expect [\clP^Q_n],
		 \\
		\clP^Q_n &\eqdef \prod_{k=n_0}^{n}[1 + \alpha_k(Q_k - (\eta +1) )]^2.
\end{aligned}
\]
The remainder of the proof consists of establishing the following bound:
\begin{equation}
	\label{e:prod-prob}
\begin{aligned} 
		\log
		\Expect\bigl[ \clP^Q_n  \bigr] 
		\geq  	\log	\Expect\bigl[\ind \{ q^n\in\clR_\epsy \} \clP^Q_n  \bigr] 
\geq  \log  \Prob\{ q^n\in\clR_\epsy \}  +    [ 2 n ( \log 2 -\epsy) ]\delta \sigma .
\end{aligned} 
\end{equation}
Since $\epsy\in(0,\half)$ can be arbitrarily close to zero,~\eqref{e:clR_limit} combined with~\Cref{t:exp-bound-delta} leads to the conclusion that the right-hand side of~\eqref{e:prod-prob} is unbounded as $n\to \infty$.
  
\Cref{t:LDP_Q_Bdd} gives    
$\alpha_k[Q_k - (\eta +1)]\in [0, 2\delta]$ 
for $n_0\le k \le n$ if $q^n\in\clR_\epsy$, and so 
by~\Cref{t:exp-bound-delta},  
\[
	\clP^Q_n \geq \exp\Bigl(\sigma \bigl[ \sum_{k=n_0}^n 2\alpha_k[Q_k  - (\eta + 1)]\bigr]\Bigr) \,,  \quad q^n\in\clR_\epsy \, .
\]
A second application of~\Cref{t:LDP_Q_Bdd} provides bounds on each term in the sum:  
\[
 \alpha_k[Q_k - (\eta +1)]
 \ge 
\begin{cases}
\delta,  &  n_0\le k\le n/2,
\\
\delta [ n \alpha_k -1 ],   & k>n/2,
\end{cases}
\qquad\qquad
 \text{whenever $q^n\in\clR_\epsy$.}
\]
Putting these bounds together gives,
\[
\begin{aligned}
	\clP^Q_n
		& \ge \exp\Bigl(\sigma   \sum_{k=n_0}^{n/2} 2\alpha_k[Q_k  - (\eta + 1)] +\sigma \sum_{k=n/2 + 1}^{n} 2\alpha_k[Q_k  - (\eta + 1)] \Bigr) 
\\
		& \ge \exp\bigl( (n-2n_0 + 2)\delta\sigma \bigr) \exp\Bigl( -\delta\sigma n + 2\sigma\delta n\sum_{k = n/2 + 1}^{n} \alpha_k \Bigr)
		\ge \exp\bigl( [2n\log 2   -2n\epsy]\delta \sigma\bigr) \, ,
\end{aligned}
\]
where the last inequality follows from $n\sum_{k = n/2 + 1}^{n} \alpha_k \geq n \log 2 -1 $ (see~\Cref{t:half-harm-series}),
and the substitution $n_0=n\epsy$.
This establishes~\eqref{e:prod-prob}
and completes the proof.
\end{proof}

\end{document}